\newtheorem{theorem}{\bf Theorem}[section] 
\newtheorem{conj}[theorem]{\bf Conjecture} 
\newtheorem{corol}[theorem]{\bf Corollary} 
\newtheorem{defi}[theorem]{\bf Definition} 
\newtheorem{lemma}[theorem]{\bf Lemma} 
\newtheorem{notation}[theorem]{\bf Notation} 
\newtheorem{prop}[theorem]{\bf Proposition} 
\newtheorem{remark}[theorem]{\bf Remark}
\newcommand{\abs}[1]{\lvert#1\rvert} 
\newcommand{\bl}{{\noi$\bullet$\ }} 
\newcommand{\ctl}{\centerline} 
\newcommand{\noi}{{\noindent}}
\newcommand{\ifff}{{if and only if }} 
\newcommand{\la}{{\langle}} \newcommand{\ra}{{\rangle}} 
\newcommand{\lf}{{\lfloor}} \newcommand{\rf}{{\rfloor}} 
\newcommand{\nd}{{\text{ and }}} 
\DeclareMathOperator{\sgn}{sgn} 
\newcommand{\sm}{{\smallsetminus}} 
\newcommand{\stx}{\begin{smallmatrix}} \newcommand{\estx}{\end{smallmatrix}} 
\DeclareMathOperator{\Supp}{Supp} 
\newcommand{\thh}{th} 
\newcommand{\wdt}{\widetilde} 
\newcommand{\tld}{{\hbox{\!\raise-.4ex\hbox{\,$\,\wdt{}\,$\,}}}} 
\DeclareMathOperator{\wt}{wt}
\renewcommand{\a}{{\alpha}} 
\newcommand{\g}{{\gamma}} 
\newcommand{\Lb}{{\Lambda}} 
\newcommand{\lb}{{\lambda}} 
\newcommand{\vf}{{\varphi}}
\newcommand{\A}{{\mathbb A}} 
\newcommand{\D}{{\mathbb D}} 
\newcommand{\E}{{\mathbb E}} 
\newcommand{\F}{{\mathbb F}} 
\newcommand{\Q}{{\mathbb Q}} 
\newcommand{\R}{{\mathbb R}} 
\newcommand{\Z}{{\mathbb Z}}
\newcommand{\cB}{{\mathcal B}}
\newcommand{\cG}{{\mathcal G}}
\newcommand{\cS}{{\mathcal S}}
\DeclareMathOperator{\Gram}{Gram}
\begin{document} 

\title{Hermite versus Minkowski} 
\author[J. Martinet]{Jacques Martinet ($^*$)} 
\subjclass[2010]{11H55, 11H71} 
\address{%
Universit\'e de Bordeaux, 
Institut de Math\'ematiques, 
351, cours 
\linebreak\indent 
de la Lib\'e\-ration, 
33405 Talence cedex, France} 
\email{Jacques.Martinet@math.u-bordeaux1.fr} 
\keywords{Euclidean lattices, successive minima, bases 
\newline 
(*) Institut de Math\'ematiques, C.N.R.S et Universit\'e {\sc Bordeaux~1}, 
UMR 5251\\ 
{\it{\bf Version 4}, September 28\thh, 2012} 
}

\begin{abstract} 
We compare for an $n$-dimensional Euclidean lattice $\Lb$ 
the smallest possible values of the product of the norms 
of $n$~vectors which either constitute a basis for $\Lb$ 
(Hermite-type inequalities) or are merely assumed to be independent 
(Minkowski-type inequalities). We improve on 1953 results of 
van der Waerden in dimensions $6$ to $8$ and prove partial result 
in dimension~$9$. 
\end{abstract} 
\maketitle

\section{Introduction}\label{secintro} 

We consider a Euclidean space~$E$ of dimension~$n$ and {\em lattices} 
\linebreak 
$\Lb,\Lb',\dots$, that is discrete subgroups of rank~$n$ of $E$. 
For $x\in E$, we define the {\em norm of~$x$} by 
$N(x)=x\cdot x$ (the square of the traditional $\| x\|$). 
The {\em determinant $\det(\Lb)$ of $\Lb$} is the determinant 
of the {\em Gram matrix} $\Gram(e_i\cdot e_j)$ of any basis 
$\cB=(e_1,\dots,e_n)$ for~$\Lb$. 
We also define the {\em minimum of $\Lb$} as 
$\min\Lb=\min_{x\in\Lb\sm\{0\}}\,N(x)$, 
and its {\em Hermite invariant} 
$\g(\Lb)=\frac{\min\Lb}{\det(\Lb)^{1/n}}$. 
The {\em Hermite constant for dimension~$n$} 
is $\g_n=\sup_\Lb\,\g(\Lb)$. 
(Theorem~\ref{thHermMin} below shows that $\g_n$ exists.) 

\smallskip 

For a lattice $\Lb$ in $E$, define $H_b(\Lb)$ and $M(\Lb)$ as 
$$\frac{\min N(e_1)\cdots N(e_n)}{\det(\Lb)}$$ 
on bases $(e_1,\dots,e_n)$ for $\Lb$, and independent vectors 
of~$\Lb$, respectively. 
\newline 
Set 

\ctl{$Q_b(\Lb)=\dfrac{H_b(\Lb)}{M(\Lb)}\,.$} 

\smallskip\noi 
Hermite, in a series of letters to Jacobi, then Minkowski, in his book 
{\em Geometrie der Zahlen\/}, obtained the following bounds: 

\begin{theorem}\label{thHermMin} For any $n$-dimensional 
lattice $\Lb$, we have 
$$H_b(\Lb)\le \Big(\frac 43\Big)^{n(n-1)/2}\ \nd\ 
M(\Lb)\le\g_n^n\,.$$ 
\end{theorem}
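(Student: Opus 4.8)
The two inequalities are independent, and each is unchanged if $\Lb$ is replaced by a scaled copy, so I would treat them separately, normalising $\det(\Lb)=1$ wherever convenient. For $H_b$ I would run an inductive reduction on bases; for $M$ I would first replace the product of norms of an arbitrary independent family by the product of the successive minima, and then argue by induction on the dimension.

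\emph{The bound on $H_b$.} Induct on $n$, the case $n=1$ being trivial. Let $e_1\in\Lb$ realise $N(e_1)=\min\Lb$ (so $e_1$ is primitive), let $\pi$ denote orthogonal projection onto $e_1^{\perp}$, and put $\bar\Lb=\pi(\Lb)$, a lattice of rank $n-1$ with $\det(\bar\Lb)=\det(\Lb)/N(e_1)$. By induction $\bar\Lb$ has a basis $(\bar f_2,\dots,\bar f_n)$ with $N(\bar f_2)\cdots N(\bar f_n)\le(4/3)^{(n-1)(n-2)/2}\det(\bar\Lb)$. Lift each $\bar f_i$ to a vector $f_i\in\Lb$ whose component along $e_1$ has length at most $\tfrac12\|e_1\|$; then $N(f_i)\le N(\bar f_i)+\tfrac14 N(e_1)$. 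Since $e_1$ and $f_i$ are independent, $N(f_i)\ge N(e_1)$, hence $N(\bar f_i)\ge\tfrac34 N(e_1)$ and therefore $N(f_i)\le\tfrac43 N(\bar f_i)$. Now $(e_1,f_2,\dots,f_n)$ is a basis of $\Lb$, and since $\det(\Lb)=N(e_1)\det(\bar\Lb)$,
$$\frac{N(e_1)N(f_2)\cdots N(f_n)}{\det(\Lb)}\ \le\ \Big(\tfrac43\Big)^{n-1}\,\frac{N(\bar f_2)\cdots N(\bar f_n)}{\det(\bar\Lb)}\ \le\ \Big(\tfrac43\Big)^{\,n-1+\frac{(n-1)(n-2)}{2}}\ =\ \Big(\tfrac43\Big)^{\frac{n(n-1)}{2}}.$$

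\emph{The bound on $M$.} Let $m_1\le\cdots\le m_n$ be the successive minima of $\Lb$; they are attained by independent vectors $v_1,\dots,v_n$, and if $(x_1,\dots,x_n)$ is any independent family with $N(x_1)\le\cdots\le N(x_n)$ then $N(x_k)\ge m_k$ for each $k$, since $x_1,\dots,x_k$ are $k$ independent vectors of norm $\le N(x_k)$. Hence the least value of the product of the norms over independent families is $m_1\cdots m_n$, so $M(\Lb)=\dfrac{m_1\cdots m_n}{\det(\Lb)}$ and it suffices to prove $m_1m_2\cdots m_n\le\g_n^{\,n}\det(\Lb)$. I would induct on $n$: keeping $v_1,\dots,v_n$ as above, set $\Lb'=\Lb\cap\mathrm{span}(v_1,\dots,v_{n-1})$, a primitive rank-$(n-1)$ sublattice whose successive minima are exactly $m_1,\dots,m_{n-1}$, and write $\det(\Lb)=\delta\,\det(\Lb')$ where $\delta=\det(\Lb/\Lb')$ is the squared length of a generator of the rank-one quotient lattice. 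Then $\delta\le m_n$, because the projection of $v_n$ orthogonal to $\mathrm{span}(v_1,\dots,v_{n-1})$ is a nonzero element of that rank-one lattice, of norm at most $N(v_n)=m_n$. Combining $\delta\le m_n$ with the induction hypothesis $m_1\cdots m_{n-1}\le\g_{n-1}^{\,n-1}\det(\Lb')$, the induction step reduces to the single inequality
$$m_n\ \le\ \frac{\g_n^{\,n}}{\g_{n-1}^{\,n-1}}\,\delta .$$

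\emph{Where the difficulty lies.} The reduction for $H_b$ is routine; the whole weight of the theorem rests on this last inequality, which carries the sharp constant $\g_n^n$ and is an equality for the chains of root lattices $D_3\subset D_4$ and $E_7\subset E_8$. The obstruction is that, unlike the basis reduction above, the successive minima of $\Lb'$ (or of an orthogonal projection of $\Lb$) do not interleave simply with those of $\Lb$, so bounding the largest minimum $m_n$ by the density $\delta$ of the last quotient requires a genuine argument — in effect Minkowski's theorem on successive minima in sharp form. I would attack it either by that route, or by ``rounding'' $\Lb$ along the Gram--Schmidt frame of $v_1,\dots,v_n$ so as to reduce to a lattice with all successive minima equal, for which $m_1\cdots m_n\le\g_n^n\det(\Lb)$ is immediate from the definition of $\g_n$; checking that such a deformation does not cost more than a factor $\g_n^n$ is the step I expect to be delicate.
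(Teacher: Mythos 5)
First, a caveat on the comparison: the paper does not prove Theorem~\ref{thHermMin} itself — both inequalities are classical and are quoted from \cite{M} (Theorems 2.2.1 and 2.6.8); the only proof idea recorded in the text is the deformation trick for the second bound, namely that the local maxima of $M$ are attained on well-rounded lattices $L$, for which $M(L)=\g(L)^n\le\g_n^n$ is immediate. Your treatment of $H_b$ is a complete and correct proof (the standard Hermite induction: project onto $e_1^{\perp}$, lift a basis of the projected lattice with $e_1$-components of length at most $\tfrac12\|e_1\|$, and combine $N(f_i)=N(\bar f_i)+t^2N(e_1)\le N(\bar f_i)+\tfrac14N(e_1)$ with $N(f_i)\ge N(e_1)$ to get $N(f_i)\le\tfrac43N(\bar f_i)$; the exponent count $n-1+\tfrac{(n-1)(n-2)}2=\tfrac{n(n-1)}2$ is right). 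The identity $M(\Lb)=m_1\cdots m_n/\det(\Lb)$ is also correctly established, and it is worth noting that your first inequality is what guarantees that $\g_n$ is finite, so the second statement makes sense.

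The gap is in the Minkowski half, and it is not only that you stop short: the ``single inequality'' $m_n\le\frac{\g_n^n}{\g_{n-1}^{n-1}}\,\delta$ to which you reduce the induction step is false in general. Take the face-centred cubic lattice $\Lb=\Z^2+\Z v_3\subset\R^3$ with $v_3=(\tfrac12,\tfrac12,\tfrac1{\sqrt2})$: all three successive minima equal $1$, and with the flag determined by $v_1=e_1$, $v_2=e_2$, $v_3$ one gets $\Lb'=\Z^2$, $\det(\Lb')=1$, $\det(\Lb)=\tfrac12$, hence $\delta=\tfrac12$ and $\frac{\g_3^3}{\g_2^2}\,\delta=\frac{2}{4/3}\cdot\frac12=\frac34<1=m_3$, even though Minkowski's bound itself holds (with equality: $m_1m_2m_3=1=\g_3^3\det\Lb$). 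The failure occurs exactly when the induction hypothesis $m_1\cdots m_{n-1}\le\g_{n-1}^{n-1}\det(\Lb')$ is strict, so the available slack cannot be localized in the last step; a different choice of minimal vectors spanning $\Lb'$ (here $\Lb'=\Lb\cap\mathrm{span}(e_1,v_3)$, of determinant $\tfrac34$) restores the inequality, so any rescue of this route would have to specify and exploit a good choice of flag — which is precisely the difficulty you acknowledge but do not resolve. As it stands the bound $M(\Lb)\le\g_n^n$ remains unproved in your proposal; the deformation-to-well-rounded-lattices argument you mention as a fallback is in fact the proof the paper points to.
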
 

\noi{\rm\small[Note that (using an argument of density) 
Minkowski proved a linear bound for $\g_n$ whereas Hermite's 
(derived from the bound for $H_b(\Lb)$) is exponential.]} 

Proofs of the theorem above can be read in \cite{M}, 
Theorems~2.2.1 and~2.6.8. The 
proof of Minkowski's 
theorem given there makes use of a deformation trick, useful 
in our context: one proves that the local maxima of $M$ 
are attained on {\em well-rounded lattices}, 
that is lattices $L$ having $n$~independent minimal vectors, 
so that $M(L)=\g(L)^n$ is bounded from above by $\g_n^n$. 
We may of course chose $e_1$ minimal among non-zero vectors, 
then $e_2$ minimal among vectors not proportional to $e_1$, etc., 
whence the name of {\em theorem of successive minima} generally 
given to Minkowski's theorem. 

\medskip 

It is well known (and we shall recover this fact below) 
that for a lattice $\Lb$ of dimension $n\le 4$, successive minima 
constitute a basis for $\Lb$ except possibly if $\Lb$ is the 
$4$-dimensional centred cubic lattice, for which index~$2$ 
may occur. Since this lattice possesses a basis of minimal vectors, 
we have $M(\Lb)=H_b(\Lb)$ up to dimension~$4$. This is no longer true 
for $n>4$, as shown by centred cubic lattices. 

\smallskip 

In his 1953 {\em Acta Mathematica} paper \cite{vdW}, van der Waerden 
gives a recursive formula for a bound for $\frac{H_b}M$ 
in dimensions $n\ge 4$. In a visit to Bordeaux (October, 2008), 
Achill Sch\"urmann pointed out to me that van der Waerden's formula 
may be given the ``closed'' form below: 

\begin{theorem}\label{thvdW} For $n\ge 4$, we have 
$Q_b(\Lb)\le\Big(\dfrac 54\Big)^{n-4}\,.$ 
\end{theorem}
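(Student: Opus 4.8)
\smallskip
\noindent\textbf{Plan of proof.} I would argue by induction on $n$. The base case is $n=4$: as recalled in the introduction, for $n\le 4$ successive minima form a basis of $\Lb$ (and in the single exceptional case $\Lb$ still possesses a basis of minimal vectors), so that $H_b(\Lb)=M(\Lb)$ and $Q_b(\Lb)=1=(5/4)^{0}$.

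For the inductive step, fix $\Lb$ of dimension $n\ge 5$ and choose successive minima vectors $m_1,\dots,m_n$, each of which may be taken primitive (a non-primitive $m_i$ could be divided, lowering its norm). Write $\mu_i$ for the square of the $i$-th successive minimum of $\Lb$, so that $M(\Lb)=\mu_1\cdots\mu_n/\det\Lb$, and put $\Lb_0=\Lb\cap\la m_1,\dots,m_{n-1}\ra_{\R}$. This sublattice is the right object to descend to for two reasons: it is \emph{pure} in $\Lb$, so every basis of $\Lb_0$ extends to one of $\Lb$; and its first $n-1$ successive minima coincide with those of $\Lb$ (``$\ge$'' because $\Lb_0\subseteq\Lb$, ``$\le$'' because $m_1,\dots,m_i\in\Lb_0$). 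Next I would take a basis $f_1,\dots,f_{n-1}$ of $\Lb_0$ realizing $H_b(\Lb_0)$ and extend it by a vector $f_n$ of least norm in its coset; then $N(f_n)=h^{2}+d^{2}$, where $h^{2}=\det\Lb/\det\Lb_0$ is the squared height of $\Lb$ over $\Lb_0$ and $d$ is the distance of the $\la m_1,\dots,m_{n-1}\ra_{\R}$-component of $f_n$ to $\Lb_0$, hence at most the covering radius of $\Lb_0$, itself $\le\frac12\bigl(\sum_{i<n}\|f_i^{*}\|^{2}\bigr)^{1/2}$ (Gram--Schmidt rounding). Since $N(f_1)\cdots N(f_n)\le N(f_n)\,H_b(\Lb_0)\det\Lb_0$ and $M(\Lb)=\mu_n\,M(\Lb_0)\det\Lb_0/\det\Lb$, this yields the recursion $Q_b(\Lb)\le\bigl(N(f_n)/\mu_n\bigr)\,Q_b(\Lb_0)$. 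Unwinding it down to dimension $4$ produces van der Waerden's recursive bound, arranged so that the contribution of each level is a clean $\tfrac14$: if $\psi_n$ denotes the resulting bound on $\sup_\Lb Q_b(\Lb)$, then $\psi_4=1$ and $\psi_n=1+\tfrac14(\psi_4+\cdots+\psi_{n-1})$, i.e.\ $\psi_n=\tfrac54\,\psi_{n-1}$, whose solution is exactly $\psi_n=(5/4)^{n-4}$.

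The crux, and the step I expect to be the main obstacle, is making the single-vector estimate tight enough to get the factor $\tfrac14$ and not something worse. The Gram--Schmidt norms $\|f_i^{*}\|^{2}$ are the successive heights of a flag in $\Lb_0$ and each satisfies $\|f_i^{*}\|^{2}\le\mu_i$, but using this together with the crude covering-radius bound only gives a loss of order $\tfrac{k-1}{4}$ at level $k$, not $\tfrac14$; and indeed the per-level ratio $N(f_n)/\mu_n$ is genuinely \emph{not} bounded by $5/4$ (for $\Lb=D_n^{*}$ it is of size $n/4$). So only the telescoped product can obey the geometric bound, and the reason it does is a balancing phenomenon that must be tracked through the whole flag: a tall hole in $\Lb_0$ (large $d$) is offset by $\Lb_0$ having been cheap at lower levels, while a small height $h$ forces a large covolume of $\Lb_0$ and hence a larger hole. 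Carrying these two opposing effects jointly through the induction is precisely the bookkeeping of van der Waerden's argument; granted that, the closed form $(5/4)^{n-4}$ is immediate from the elementary computation above.
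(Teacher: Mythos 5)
The paper itself offers no proof of Theorem~\ref{thvdW}: it is van der Waerden's result, quoted from \cite{vdW}, with the closed form $(5/4)^{n-4}$ observed by Sch\"urmann, so there is no in-paper argument to measure yours against and the proposal must stand on its own. Its skeleton is sound as far as it goes: the base case $n=4$ (where $H_b=M$), the purity of $\Lb_0=\Lb\cap\langle m_1,\dots,m_{n-1}\rangle_{\R}$, the fact that its first $n-1$ minima agree with those of $\Lb$, and the one-step inequality $Q_b(\Lb)\le \frac{N(f_n)}{\mu_n}\,Q_b(\Lb_0)$ obtained by extending an $H_b$-realizing basis of $\Lb_0$ by a coset-minimal $f_n$ are all correct.

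The gap is everything after that, and it is the heart of the theorem. The recursion $\psi_n=1+\tfrac14(\psi_4+\cdots+\psi_{n-1})$ is asserted, not derived: what your estimates actually give is $N(f_n)\le\mu_n+\rho(\Lb_0)^2\le\mu_n+\tfrac14\sum_{i<n}\|f_i^{*}\|^{2}$, and the terms of that sum are norms attached to one particular basis of $\Lb_0$, whereas the $\psi_i$ bound ratios of \emph{products}; nothing in the proposal ties $\sum_{i<n}\|f_i^{*}\|^{2}/\mu_n$ to $\psi_4+\cdots+\psi_{n-1}$. Bounding each $\|f_i^{*}\|^{2}$ by $\mu_i\le\mu_n$ gives only the factor $1+\tfrac{n-1}{4}$, which you yourself flag as insufficient, and indeed for the centred cubic lattice $C_n=\D_n^{*}$ the top-level ratio equals $\tfrac n4$, so no per-level bound of $\tfrac54$ exists. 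Nor does the weaker system $x_k\le 1+\tfrac14(x_1+\cdots+x_{k-1})$ for the level ratios $x_k=N(f_k)/\mu_k$ rescue the product: running it at equality gives $x_k=(5/4)^{k-1}$ and a product $(5/4)^{n(n-1)/2}$, an exponent quadratic in $n$, i.e.\ no better than Hermite-type growth. The passage from this to the linear exponent $n-4$ is precisely the ``balancing phenomenon'' you defer with ``granted that\dots''; that bookkeeping is the substance of van der Waerden's argument and is absent here. A secondary unjustified step: $\|f_i^{*}\|^{2}\le\mu_i$ holds for bases reduced in the Hermite--Korkine--Zolotareff sense, not for an arbitrary basis realizing $H_b(\Lb_0)$, and if you replace your basis by such a reduced one, its norm product is no longer $H_b(\Lb_0)\det\Lb_0$, so the induction hypothesis on $Q_b$ no longer applies. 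As written, the proposal establishes a bound, but not the bound $(5/4)^{n-4}$ claimed by the theorem.
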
 

He also put forward the conjecture 
(based on properties of the Voro\-noi cones) 
that the bound $\frac n4$ could hold for $4\le n\le 8$, 
a better bound than van der Waerden's for $n=6,7,8$. 
This is the main theorem we are going to prove. 

\begin{theorem}\label{thn<=8} For $4\le n\le 8$, we have 
$Q_b(\Lb)\le\frac n4$, and equality is needed \ifff $\Lb$ 
is a centred cubic lattice. 
\end{theorem}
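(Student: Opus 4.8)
The plan is to argue by induction on $n$. The base case $n=4$ is contained in the discussion above: for $n\le 4$ the successive minima of any lattice extend to a basis (the sole exception, the $4$-dimensional centred cubic lattice, still carries a basis of minimal vectors), so $Q_b(\Lb)=1=\tfrac44$ for every $4$-dimensional $\Lb$, the centred cubic lattice being among the extremal ones. For the inductive step I would compare $\Lb$ with a hyperplane section, showing that the loss so incurred equals, for the centred cubic lattice, exactly the extra $\tfrac14$ per dimension; the lattices for which this does not already close the induction will be pinned down by a deformation argument in the spirit of the proof of Theorem~\ref{thHermMin}, together with the classification of lattices in dimensions $\le 8$.

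Here is the section recursion. Let $\Lb$ have dimension $n$ with $5\le n\le 8$, successive minima $m_1\le\cdots\le m_n$, so that $M(\Lb)=\dfrac{m_1\cdots m_n}{\det\Lb}$. Let $H$ be a hyperplane spanned by $n-1$ independent minimal vectors realizing $m_1,\dots,m_{n-1}$, and set $\Lb'=\Lb\cap H$; this is an $(n-1)$-dimensional lattice with $m_i(\Lb')=m_i$ for $i\le n-1$. As $\Lb/\Lb'$ is infinite cyclic we may write $\Lb=\Lb'\bgo\Z e_n$; letting $\ell$ be the length of the component of $e_n$ orthogonal to $H$ and choosing the component of $e_n$ inside $H$ to minimize $N(e_n)$, one has $\det\Lb=\ell^2\det\Lb'$ and $N(e_n)\le\ell^2+\rho(\Lb')^2$, with $\rho(\Lb')$ the covering radius of $\Lb'$ (and one checks $\ell^2\le m_n$, so $N(e_n)/m_n\le 1+\rho(\Lb')^2/m_n$). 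Taking as a basis of $\Lb$ a basis of $\Lb'$ attaining $H_b(\Lb')$, completed by $e_n$, and using the identity $M(\Lb)=\dfrac{m_n}{\ell^2}\,M(\Lb')$, one gets
$$Q_b(\Lb)\ \le\ \frac{N(e_n)}{m_n}\,Q_b(\Lb').$$
Since $Q_b(\Lb')\le\tfrac{n-1}4$ by induction, it suffices to exhibit an admissible $H$ with $N(e_n)/m_n\le n/\bigl(4\,Q_b(\Lb')\bigr)$. For the centred cubic lattice the only admissible $H$ gives $\Lb'=\Z^{n-1}$, so $Q_b(\Lb')=1$, while $m_n=1$ and $N(e_n)=n/4$: the displayed inequality is then an equality, which both shows the bound is attained and isolates the extremal configuration.

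It remains to show that a lattice for which no admissible section is good enough must be a centred cubic lattice. Here I would use that $Q_b$ is bounded (Theorem~\ref{thvdW}), is continuous, and equals $1$ on all similarity classes lying outside a suitable compact region, so that it attains its maximum; and that at a maximizing $\Lb$ the minimal vectors together with the vectors of an optimal basis impose enough linear relations on the Gram matrix to confine $\Lb$ to a finite list. Indeed, if $\Lb_0$ denotes the sublattice generated by the minimal vectors of $\Lb$ then $m_i(\Lb_0)=m_i(\Lb)$ for all $i$, so $M(\Lb)=[\Lb:\Lb_0]^2\,M(\Lb_0)\ge[\Lb:\Lb_0]^2$, whence $[\Lb:\Lb_0]^2\le M(\Lb)\le\gamma_n^n$; as $\gamma_n$ is known for $n\le 8$, the index is bounded, and $\Lb_0$ is an orthogonal sum of irreducible root lattices and copies of $\Z$ glued by a small abelian group --- a finite list of possibilities in each dimension $\le 8$. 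Running through it, every candidate other than $\Z^n$ glued by $(\tfrac12,\dots,\tfrac12)$ is seen, either by a sharper choice of $H$ in the section recursion or by a direct evaluation of $H_b$ and $M$, to satisfy $Q_b(\Lb)<\tfrac n4$; for the centred cubic lattice itself $H_b(\Lb)=n$ and $M(\Lb)=4$, giving $Q_b(\Lb)=\tfrac n4$ and the equality statement.

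The step I expect to be the main obstacle is the case analysis in dimensions $7$ and $8$. As soon as $Q_b(\Lb')$ is itself near its maximum --- in particular when $\Lb'$ is a lower-dimensional centred or near-centred cubic lattice --- the target ratio $n/\bigl(4Q_b(\Lb')\bigr)$ barely exceeds $1$, so any crude bound on $\rho(\Lb')$, equivalently on $N(e_n)$, is useless: one must use the inner products $e_n\cdot v_i$ with the minimal vectors of $\Lb'$ to prove $N(e_n)$ genuinely close to $m_n$, or else move to a more favorable hyperplane. Ruling out such ``towers'' of near-centred-cubic sections is precisely the kind of phenomenon that makes dimension~$9$ harder, and the bookkeeping coupling the index $[\Lb:\Lb_0]$ to the norms of the completing vectors is where the real work lies.
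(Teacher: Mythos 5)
Your sectional recursion is sound as far as it goes: with $\Lb'=\Lb\cap H$ primitive and $\Lb=\Lb'\oplus\Z e_n$, the inequality $Q_b(\Lb)\le\frac{N(e_n)}{m_n}\,Q_b(\Lb')$ is correct. But the proof never closes, and you say so yourself: the bound $N(e_n)/m_n\le n/\bigl(4\,Q_b(\Lb')\bigr)$ is exactly what has to be proved, and no mechanism for it is supplied. The covering-radius estimate gives only $N(e_n)/m_n\le 1+\rho(\Lb')^2/m_n$, which already for $\Lb'\simeq\Z^{n-1}$ (the section occurring in the centred cubic lattice itself) yields $(n+3)/4>n/4$; and when $\Lb'$ is a lower-dimensional centred (or near-centred) cubic lattice the target $n/(n-1)$ barely exceeds $1$. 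So the entire content of the theorem in dimensions $5$ to $8$ --- excluding towers of near-extremal sections --- is deferred to ``the real work'' rather than carried out.

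The fallback you propose for that work does not function. First, the sublattice $\Lb_0$ generated by the minimal vectors of $\Lb$ need not have rank $n$, and $m_i(\Lb_0)=m_i(\Lb)$ fails in general; it holds when $\Lb$ is well-rounded, but a maximizer of $Q_b$ is not known a priori to be well-rounded --- that is precisely Conjecture~\ref{conjwellrounded}. Second, and decisively, a lattice generated by its minimal vectors is not confined to a finite list of root lattices and copies of $\Z$ glued by a small group: that classification concerns norm-$2$ vectors in integral lattices, whereas here such lattices form positive-dimensional families --- Remark~\ref{remparameters} exhibits a $9$-parameter family of lifts of the code $C_8$, all with $Q_b=\frac{25}{16}$ --- so there is no finite list to run through, and the continuity/compactness step is asserted rather than proved. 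The paper obtains finiteness at a different level: with $\Lb'$ generated by a frame of successive minima, Watson's bound $\imath(\Lb)\le\g_n^{n/2}$ limits $[\Lb:\Lb']$ for $n\le 8$, the possible quotients $\Lb/\Lb'$ are classified by finitely many $\Z/d\Z$-codes, and $Q_b$ is then bounded code by code using Watson's identity (Proposition~\ref{propWatsonindentity}) together with the index-$2$ and crude-bound lemmas; that case-by-case analysis, which your sketch replaces by an appeal to a nonexistent classification, is where the theorem is actually proved.
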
 

The choice of an orthonormal basis identifies $E$ with $\R^n$ 
equipped with his canonical basis $\cB=(e_1,\dots,e_n)$, 
which generates the lattice $\Z^n$. Centred cubic lattices 
are the lattices which are similar to 

\smallskip\ctl{$C_n:=\la\cB,e\ra$ \ where \ 
$e=\frac{e_1+\dots+e_n}2$\,,} 

\smallskip\noi 
which can be viewed as a lift of the (unique) $[n,1,n]$-binary code. 
We can define similarly the canonical lift $\Lb_C$ 
of any binary code~$C$ of weight $\wt(C)\ge 4$, obtaining this way 
a lattice $\Lb_C$ of minimum~$1$. Taking for $C$ the unique 
$[9,2,6]$-(binary) code $C_9$, 
with generator matrix 

\smallskip\ctl{$\left(\stx 
1&1&1&1&1&1&0&0&0\\0&0&0&1&1&1&1&1&1 \estx\right)$} 

\noi 
and weight distribution $(6^3)$, we again obtain a lattice 
with 

\ctl{
$Q_b(\Lb)=\Big(\frac 64\Big)^2=\frac 94$\,.} 

\smallskip\noi 
This shows that the statement of Theorem~\ref{thn<=8}
does not extend as it stands in dimensions~$n\ge 9$. 

\smallskip

The proofs of Theorem~\ref{thn<=8} for certain codes 
that I give below are often valid beyond dimension~$8$. 
This suggests that the bound $\frac n4$ is still valid 
for $n=9$. This is the conjecture below, 
which I partially prove in the next theorem. 
However in order that this paper should not have an unreasonable 
length I did not try to prove all cases; 
see Proposition~\ref{thn=9} and Remark~\ref{remindex42}. 

\begin{conj} \label{conjn=9} 
For $n=9$, we have $Q_b(\Lb)\le\frac 94$, and equality is needed 
\ifff $\Lb$ is either a centred cubic lattice or is similar 
to the canonical lift of $C_9$. 
\end{conj}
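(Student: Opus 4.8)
The plan is to run the argument that establishes Theorem~\ref{thn<=8} one dimension higher; the new feature in dimension~$9$ is that the case analysis becomes long and a second extremal lattice, the lift $\Lb_{C_9}$, joins the centred cubic one. After scaling so that $\min\Lb=1$ and using the deformation argument recalled after Theorem~\ref{thHermMin} to restrict attention to lattices near well-rounded ones, one studies the sublattice $\Lb_0\subseteq\Lb$ spanned by the minimal vectors of~$\Lb$, the index $q=[\Lb:\Lb_0]$, and the finite abelian group $\Lb/\Lb_0$. If $q=1$ one can extract a (near-)basis from minimal vectors, so $Q_b(\Lb)$ is forced close to~$1$; if $q>1$ one splits according to the isomorphism type of $\Lb/\Lb_0$ and to whether $\Lb_0$ is an orthogonal frame or an $A$-, $D$- or $E$-type sublattice. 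The decisive family is $\Lb_0\cong\Z^9$ with $\Lb/\Lb_0\cong(\Z/2)^k$, i.e. $\Lb$ similar to the canonical lift $\Lb_C$ of a binary code~$C$ of length~$\le9$ with $\wt(C)\ge4$; both conjectured extremal lattices --- the centred cubic $C_9$ (code $[9,1,9]$) and $\Lb_{C_9}$ (code $[9,2,6]$) --- live here.

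For such a $\Lb_C$ with $\dim C=k$ one has $\det\Lb_C=4^{-k}$, the vectors $e_1,\dots,e_9$ form $9$ independent minimal vectors so that $M(\Lb_C)=4^k$, and completing a generator matrix of $C$ in systematic form by suitable unit vectors yields a basis whose norm product is $\prod_{j=1}^k\wt(c_j)/4^k$; hence
$$
Q_b(\Lb_C)\ \le\ \frac{1}{4^k}\prod_{j=1}^{k}\wt(c_j),
$$
the minimum being over systematic generators $(c_1,\dots,c_k)$ of~$C$. One then runs through the short list of binary codes of length~$\le9$ and weight~$\ge4$. For $k=1$ this reads $w/4\le\frac94$, with equality only for $[9,1,9]$. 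For $k=2$ the three nonzero codewords have weights~$\ge4$ with even sum~$\le18$, so the product of the two smallest is $\le36$, with equality only for the all-$6$ profile, i.e. the unique $[9,2,6]$ code $C_9$, giving $Q_b(\Lb_{C_9})\le\frac94$. For $k\ge3$ the Griesmer bound caps the minimum distance and a genuine but finite information-set estimate yields $\prod_j\wt(c_j)\le 9\cdot4^{k-1}$, with strict inequality. Codes whose support misses a coordinate split off orthogonal copies of~$\Z$, which alter neither $H_b$ nor $M$, and are absorbed by Theorem~\ref{thn<=8} in dimension~$\le8$ (where the bound is $\le 2<\frac94$).

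What keeps Conjecture~\ref{conjn=9} a conjecture here (see Proposition~\ref{thn=9} and Remark~\ref{remindex42}) is the treatment of the lattices that are \emph{not} of code type. The two sources of difficulty are: (i) a non-orthogonal $\Lb_0$, where the geometry of the spanning root-type sublattice must be pinned down before $H_b$ can be estimated; and, more seriously, (ii) gluings with higher $2$-power torsion, typically $\Lb/\Lb_0\cong\Z/4\times\Z/2$ of index~$8$, where glue vectors acquire denominator~$4$ and the tidy code formula must be replaced by an optimisation over cosets. I expect~(ii) to be the real obstacle: one wants an analogue of the systematic-basis bound valid for mixed $2$- and $4$-torsion gluings, showing that any gain from a heavy glue vector is cancelled by the corresponding growth of $\det\Lb$, followed by a finite --- but in dimension~$9$ sizeable --- verification that none of these configurations reaches the value $\frac94$ apart from the two lattices already listed.
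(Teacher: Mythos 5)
The statement you are attacking is stated in the paper as a \emph{conjecture}: the paper itself proves it only under the structural hypotheses of Theorem~\ref{thn=9} (quotient $2$- or $3$-elementary, index $4$, or index $\ge 9$), and explicitly leaves eighteen cases of cyclic quotients of orders $5$, $6$, $7$, $8$ open. Your proposal does not close these cases either, and more importantly its opening reductions are not available. First, the deformation trick recalled after Theorem~\ref{thHermMin} applies to $M(\Lb)$ alone; there is no argument that the maximum of the \emph{ratio} $Q_b=H_b/M$ is attained at (or near) well-rounded lattices --- the paper states precisely this as Conjecture~\ref{conjwellrounded} and remarks that an a priori proof of it ``would considerably simplify our proofs''. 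Assuming it is the central unjustified step in your plan. Second, you base the case division on the sublattice $\Lb_0$ spanned by the \emph{minimal} vectors of $\Lb$; for a lattice that is not well-rounded this sublattice can have rank $<9$, and even when it has full rank and index $1$ your claim that $Q_b$ is then ``forced close to $1$'' is false (the paper cites a $10$-dimensional lattice generated by minimal vectors with $Q_b>1$). The correct object, used throughout the paper, is the lattice $\Lb'$ generated by a \emph{frame of successive minima}, whose quotient $\Lb/\Lb'$ yields the $\Z/d\Z$-code; this requires no orthogonality. Your ``decisive family'' $\Lb_0\cong\Z^9$ with canonical code lifts covers only the orthogonal-frame situation, whereas the bound must be proved for arbitrary frames: the paper's Proposition~\ref{prop2elem} obtains your product-of-weights bound in that generality by lifting codewords to short vectors via Lemma~\ref{lemindex2} and Watson's identity, not by computing determinants of code lattices.

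Finally, your diagnosis of where the difficulty lies is off the mark. The mixed gluings of type $4\cdot 2$ (your case (ii)) are in fact disposed of in the paper (Remark~\ref{remindex42}), and index $\ge 10$ is handled via the classification of \cite{K-M-S}; what genuinely blocks a full proof are the cyclic quotients of orders $5$ to $8$ for general (not necessarily well-rounded) frames, where one must bound norms of glue vectors of denominator $5,6,7,8$ by delicate uses of Watson's identity and the crude bound of Proposition~\ref{propcrudebound}, as in Propositions~\ref{propindex3} and~\ref{propindex4b}. So the gap is twofold: the reduction to well-rounded, code-type lattices is assumed rather than proved, and the residual non-code cases --- which are exactly the ones the paper cannot yet settle --- are not addressed by your finite verification plan.
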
 

\begin{theorem}\label{thn=9} 
Conjecture~\ref{conjn=9} is true 
if $\Lb$ contains a sublattice $\Lb'$ generated by a frame 
of successive minima for $\Lb$ which satisfies one of the following 
conditions: 
\begin{enumerate} 
\item 
$\Lb/\Lb'$ is $2$- or $3$-elementary. 
\item 
$[\Lb:\Lb']=4$. 
\item 
$[\Lb:\Lb']\ge 9$. 
\end{enumerate} 
\end{theorem}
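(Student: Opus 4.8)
The plan is to reduce the statement about the $9$-dimensional lattice $\Lb$ to a finite combinatorial problem governed by the subgroup $\Lb/\Lb'$, and then to dispose of each of the three cases by a mixture of code-theoretic and elementary inequalities. Fix a frame $(f_1,\dots,f_9)$ of successive minima for $\Lb$, so that $N(f_1)\le\dots\le N(f_9)$ and $\Lb'=\la f_1,\dots,f_9\ra$. Write $q=[\Lb:\Lb']$. The key observation (already implicit in the discussion of canonical lifts) is that $M(\Lb)$ is controlled by the product $N(f_1)\cdots N(f_9)$ up to the factor $q$ coming from the index, while $H_b(\Lb)$ is controlled by the best basis one can extract from $\Lb$; the quotient $Q_b(\Lb)=H_b(\Lb)/M(\Lb)$ therefore depends, after normalising $\det\Lb$, only on the isometry type of the finitely many ``glue vectors'' representing $\Lb/\Lb'$ in the torus $\Lb'^\vee/\Lb'$, together with the Gram data of the frame. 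First I would record, following the deformation argument cited after Theorem~\ref{thHermMin}, that it suffices to treat the case where $\Lb$ itself is well-rounded, or at least where the frame of successive minima is as ``tight'' as possible; this pins down $M(\Lb)=\g(\Lb)^9$ in the extremal situation and lets me compare directly against $\g_9^9$-type bounds.

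For case~(1), when $\Lb/\Lb'$ is $2$- or $3$-elementary, the glue vectors are $2$- or $3$-torsion, and the situation is exactly that of a lift $\Lb_C$ of a binary (resp.\ ternary) code $C$ of length~$9$. Here I would invoke the classification of such codes of minimum weight $\ge 4$: the weight-distribution constraints force either the trivial code (giving $Q_b=1$), a decomposable code that reduces to lower dimension where Theorem~\ref{thn<=8} applies, the code $C_9$ itself (giving exactly $\frac94$), or the centred-cubic code (also giving $\frac94$). The bound $Q_b(\Lb)\le\frac94$ then follows code by code, and tracking equality identifies precisely the two extremal lattices named in Conjecture~\ref{conjn=9}. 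The main work in this case is bounding $H_b$ from above for each candidate code: one exhibits an explicit basis of $\Lb_C$ built from short vectors and estimates the product of its norms; van der Waerden's Theorem~\ref{thvdW} gives a crude fallback $\left(\frac54\right)^5$, which is worse than $\frac94$, so a sharper, code-specific basis construction is genuinely required.

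For case~(2), $q=4$, the quotient $\Lb/\Lb'$ is either $\Z/4$ or $(\Z/2)^2$; the latter is subsumed in case~(1), so the real content is $\Lb/\Lb'\cong\Z/4$, generated by a single glue vector $g$ with $2g\in\Lb'$ but $g\notin\Lb'\oplus\frac12\Lb'$ in the obvious sense. I would parametrise $g=\sum c_i f_i$ with $c_i\in\{0,\frac14,\frac12,\frac34\}$, use $N(g)\ge\min\Lb=N(f_1)$ together with the successive-minima inequalities to constrain the coefficient pattern, and then again build an explicit short basis of $\Lb$ to bound $H_b$. For case~(3), $q\ge9$, the point is the opposite: a large index forces $\det\Lb$ to be small relative to $N(f_1)\cdots N(f_9)$, so $M(\Lb)$ is large, and since $H_b(\Lb)\le\left(\frac43\right)^{36}$ is a fixed constant by Theorem~\ref{thHermMin} (Hermite's bound in dimension~$9$), the quotient $Q_b$ is pushed below $\frac94$ once $q$ is large enough; one checks that $q\ge9$ already suffices, using Minkowski's bound $M(\Lb)\le\g_9^9$ from the same theorem in the reverse direction to calibrate.

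The step I expect to be the main obstacle is the explicit upper bound for $H_b$ in cases~(1) and~(2): one must produce, for each relevant glue configuration, an actual basis of $\Lb$ whose norm-product is small enough, and show no worse basis is forced. This is where van der Waerden's recursive bound is too lossy and where the argument becomes genuinely case-dependent — indeed it is precisely the cases resisting a uniform treatment that the author chooses to omit (cf.\ Remark~\ref{remindex42}). A secondary difficulty is ensuring the equality analysis is tight: ruling out spurious equality cases requires knowing that any non-extremal short basis strictly improves the product, which needs the strict form of the arithmetic–geometric type inequalities used to pass between $H_b$, $M$, and $\det\Lb$.
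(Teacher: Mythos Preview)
Your case~(3) argument does not work. You propose to bound $Q_b=H_b/M$ by combining Hermite's upper bound $H_b(\Lb)\le(4/3)^{36}$ with a lower bound on $M(\Lb)$ coming from the large index. But $(4/3)^{36}$ is of order $3\times 10^4$; even granting $M(\Lb)\ge q^2\ge 81$ (from $\det\Lb'=q^2\det\Lb$ together with Hadamard's inequality $\det\Lb'\le\prod N(f_i)$), the resulting bound on $Q_b$ is in the hundreds, nowhere near $\tfrac94$. Invoking Minkowski's bound $M(\Lb)\le\g_9^9$ ``in the reverse direction'' only makes things worse, since an upper bound on $M$ pushes $Q_b$ up, not down. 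The paper's actual treatment of $[\Lb:\Lb']\ge 9$ is not analytic at all: it relies on the computer-aided classification of \cite{K-M-S}, which shows that for index $\ge 10$ every admissible $\Lb$ (apart from certain $2$-elementary quotients of order~$16$, handled separately) has a basis of minimal vectors, so $Q_b=1$; and for cyclic index~$9$, four of the six codes again give minimal-vector bases while the remaining two are reduced, via Watson's identity, to index~$3$ in dimension~$\le 7$.

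Two further gaps. First, your reduction to well-rounded lattices is precisely Conjecture~\ref{conjwellrounded} of the paper, not an established fact; the deformation argument cited after Theorem~\ref{thHermMin} proves only that the local maxima of $M$ (not of $Q_b$) occur on well-rounded lattices. The paper works throughout with arbitrary frames of successive minima and must repeatedly track the position of $e_n$ among the supports $S_i$. Second, your sketches for cases~(1) and~(2) point in the right direction (code classification plus explicit short bases), but the tool that actually makes the bounds small enough is Watson's identity (Proposition~\ref{propWatsonindentity}), which you never invoke: it is what turns ``$\sum|a_i|=2d$'' into ``the $e_i$ on the support have equal norm and can be exchanged for vectors of the same norm'', and it drives Lemmas~\ref{lemindex3a}--\ref{lemindex3c} and Proposition~\ref{propindex4b}. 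Without it, the ``sharper, code-specific basis construction'' you allude to has no mechanism for producing bounds below van der Waerden's $(5/4)^5$.
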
 

Enlarging 
the code 
$C_9$ with a column 
{\small $\left(\stx 0\\1\estx\right)$}, 
we obtain the unique 
{\em odd} $[10,2,6]$-binary code $C_{10}$; this has weight 
distribution $(6\cdot 7^2)$, and its lift has 
$Q_b=\frac 64\cdot\frac 74=\frac{21}8>\frac{10}4$; 
and lifting convenient binary codes of length~$n$ and dimension~$2$ 
indeed suffices to show that the bound $\frac n4$ no longer holds 
beyond~$n=9$. 

\bigskip 

Here is an outline of the method used to prove Theorems 
\ref{thn<=8} and~\ref{thn=9}. 
For every lattice $\Lb\subset E$, we denote by $\Lb'$ a lattice 
having as a basis a frame $(e_1,\dots,e_n)$ of successive minima 
for $\Lb$ and by $d$ the annihilator of $\Lb/\Lb'$. 
We define the {\em maximal index $\imath(\Lb)$ of $\Lb$} 
as the maximal value of the index $[\Lb:\Lb']$ for $\Lb'$ as above. 

Given $d$ we may write 

\ctl{$\Lb=\la\Lb',f_1,\dots,f_k\ra$} 

 \smallskip\noi 
for vectors $f_i$ of the form $f=\frac{a_1 e_1+\dots a_n e_n}d\,.$ 
The collection of the $n$-tuples $(a_1,\dots,a_n)$ modulo~$d$ 
defines a $\Z/d\Z$-code canonically associated with $(\Lb,\Lb')$. 
These codes are classified for $n\le 8$ in \cite{M1}, 
where I extended previous work by Watson, Ryshkov and Zahareva; 
\cite{Wa}, \cite{Ry}, \cite{Za}), and for $n=9$ in \cite{K-M-S}. 
The proof of Theorems \ref{thn<=8} and \ref{thn=9} heavily relies 
on the classification of these $\Z/d\Z$-codes 
(though some general inequalities will sometimes allow us to skip 
a detailed case-by-case analysis): we shall calculate for each 
admissible code $C$ an upper bound of $\frac{H_b(\Lb)}{M(\Lb)}$ 
for $\Lb\in C$ and check that $\frac n4$ is attained only on codes 
defining the lattices listed in these theorems. 

\smallskip 

The bounds we shall prove for a given code are scarcely optimal, 
and a closer look will show that they are not optimal 
whenever they are not sharp on well-rounded lattices. 
Probably the exact bounds on all codes are attained 
only on well-rounded lattices. 
An {\em a priori} proof of this result would considerably 
simplify our proofs. 

\smallskip 

It should be noted that the results of \cite{M1} were obtained 
essentially by hand: we made use of a computer only to prove 
the existence of some particular codes, which does not matter 
for this paper. So Theorem~\ref{thn<=8} will be proved 
within the frame of ``classical'' mathematics. 

This is no longer true for dimension~$9$. 
Though classification details of $\Z/d\Z$-codes can (or could) 
be skipped for small values of $d$, I do not see any way 
of avoiding the heavy calculations using linear programming 
packages performed in \cite{K-M-S} to prove that 
only index $16$ need be considered if $[\Lb:\Lb']>12$. 
This problem of large indices shows up from dimension $7$ onwards. 
In \cite{Wa}, Watson proved that if $n=7$ and $[\Lb:\Lb']>5$, 
then $\Lb\sim\E_7$ (and $\Lb/\Lb'$ is $2$-elementary), 
and stated an analogue for dimension~$8$, 
for which a proof can be read on my home page: 
if $n=8$ and $[\Lb:\Lb']>8$, then $\Lb\sim\E_8$ 
(and $\Lb/\Lb'$ is elementary of order $3^2$ or $2^4$). 

\medskip 

After having recalled in Section~\ref{secbackground} 
some general facts on Watson's index theory, 
we establish in Section~\ref{secdenom2} 
sharp bounds for $Q_b(\Lb)$ when $\Lb/\Lb'$ is $2$-elementary. 
Then Section~\ref{secdim<=7} is devoted to dimensions~$n\le 7$, 
and index~$3$, and to some cases of index~$4$. 
Dimension~$8$ is dealt with in Section~\ref{secindex4-5} 
after having proved complements on index~$4$. 
This will complete the proof of Theorem~\ref{thn<=8}. 
Theorem~\ref{thn=9} is then proved in Section~\ref{secdim9}. 

\medskip 

Actually the reference to a basis in the definition of $H_b$ 
is not pertinent: in \cite{M-S} is displayed an example 
of a $10$-dimensional lattice $L$ which is generated by its minimal 
vectors but has no basis of minimal vectors, 
so that $Q_b(L)$ is strictly larger than one, 
though it would be reasonable to consider that successive minima 
suffice to describe the behaviour of~$L$. 

We may define as follows an invariant $H_g$ 
for a lattice~$\Lb$. 

\noi{\em 
For every finite set $\cG$ of generators of $\Lb$, take the maximum 
$M_\cG(\Lb)$ of the products $N(e_1)\cdots N(e_n)$ on all systems 
of independent vectors $e_1,\dots,e_n$ extracted from $\cG$, 
and define $H_g(\Lb)$ as the lower bound (indeed, a minimum) 
of $M_\cG(\Lb)$ on all generating sets $\cG$. 

Finally let $Q_g(\Lb)=\frac{H_g(\Lb)}{M(\Lb)}$}. 

\smallskip 

\noi We clearly have $H_g(L)=M(L)$ for the lattice $L$ above. 
It will turn out that for dimensions $n\le 8$ the exact bounds 
for $Q_b$ of Theorem~\ref{thn<=8} is also the exact bounds for $Q_g$ 
(and also for $n=9$ under Conjecture~\ref{conjn=9}).

\section{Some background}\label{secbackground} 

The basic methods and results on Watson's {\em index theory} 
can be read in \cite{M1} and \cite{K-M-S}. Here we recall a few facts 
that will be used all along this paper, beginning with 
{\sl Watson's identity}, the most fruitful tool for what follows, 
the (simple) proof of which is left to the reader.

\subsection{Watson's identity} 
\begin{prop} \label{propWatsonindentity} {\rm(Watson)} 
Let $\cB=(e_1,\dots,e_n)$ be a basis for $E$ and let $a_1,\dots,a_n$ 
and $d>1$ be integers. For $\lb\in\R$, let $\sgn(\lb)=-1$, $0$ or $1$ 
according as $\lb$ is negative, positive or zero. 
Let 
$e=\frac{a_1 e_1+\dots+a_n e_n}d$\,.
Then 
$$\sum|a_i|\big(N(e-\sgn(a_i)\,e_i)-N(e_i)\big)= 
\left(\Big(\sum_{i=1}^n|a_i|\Big)-2d\right)N(e)\,.\qed$$ 

\end{prop}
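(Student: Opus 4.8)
The plan is to verify Watson's identity by a direct computation, expanding the norms on the left-hand side using the bilinearity of the inner product. First I would fix notation: write $e=\frac{1}{d}\sum_i a_i e_i$, and for each index $i$ consider the vector $e-\sgn(a_i)e_i$. Note that if $a_i=0$ the corresponding term $|a_i|\big(N(e-\sgn(a_i)e_i)-N(e_i)\big)$ vanishes, so only the indices with $a_i\neq 0$ contribute; for those, $\sgn(a_i)=a_i/|a_i|$.

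The key algebraic step is the expansion
\[
N(e-\sgn(a_i)e_i)=N(e)-2\sgn(a_i)\,(e\cdot e_i)+N(e_i),
\]
so that
\[
N(e-\sgn(a_i)e_i)-N(e_i)=N(e)-2\sgn(a_i)\,(e\cdot e_i).
\]
Multiplying by $|a_i|$ and using $|a_i|\sgn(a_i)=a_i$ gives the summand $|a_i|N(e)-2a_i\,(e\cdot e_i)$. Summing over all $i$ (the $a_i=0$ terms contributing nothing on either side of this per-term identity) yields
\[
\sum_i|a_i|\big(N(e-\sgn(a_i)e_i)-N(e_i)\big)=\Big(\sum_i|a_i|\Big)N(e)-2\sum_i a_i\,(e\cdot e_i).
\]

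To finish I would identify the last sum. Since $e=\frac{1}{d}\sum_j a_j e_j$, we have $\sum_i a_i\,(e\cdot e_i)=e\cdot\big(\sum_i a_i e_i\big)=e\cdot(d\,e)=d\,N(e)$. Substituting this back gives
\[
\sum_i|a_i|\big(N(e-\sgn(a_i)e_i)-N(e_i)\big)=\Big(\sum_i|a_i|\Big)N(e)-2d\,N(e)=\left(\Big(\sum_i|a_i|\Big)-2d\right)N(e),
\]
which is exactly the claimed identity. There is no real obstacle here — the only point requiring a moment's care is the handling of the sign function and the vanishing of the $a_i=0$ terms, which is why the statement is phrased with $\sgn$ rather than dividing by $|a_i|$; this is presumably why the authors deem the proof straightforward enough to leave to the reader.
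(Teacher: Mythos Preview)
Your proof is correct and is precisely the direct bilinear expansion the paper has in mind when it leaves the verification to the reader; there is no alternative argument to compare against since the paper gives no proof beyond the \qed. The only minor remark is that the paper's wording of the sign convention appears to contain a typo (the intended meaning is the standard $\sgn(\lb)=-1,0,1$ for $\lb$ negative, zero, positive), and your use of $\sgn(a_i)=a_i/|a_i|$ for $a_i\ne 0$ is consistent with that intended convention and with how the identity is applied later in the paper.
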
 

\begin{defi} \label{defm_i} {\rm 
In the sequel we denote by $\cB=(e_1,\dots,e_n)$ a basis for $E$ 
and by $\Lb'$ the lattice it generates. With the data above, 
we set $A=\sum_j\,\abs{a_j}$. For $i\ge 0$ we denote by $S_i$ 
the set of subscripts $j$ (or of vectors $e_j$) 
for which $\abs{a_j}=i$ and set $m_i=\abs{S_i}$, 
and define $m\le n$ by $m=\sum_{i\ne 0}\,m_i$. 
We also set $T=\dfrac{e_1+\dots+e_n}d$.} 

{\rm We say that {\em Watson's condition holds} if $A=2d$ 
and the $a_i$ are non-zero 
(i.e., if $A=2d$ and $m=n$).} 
\end{defi} 

\begin{prop} \label{propWatsoncond} 
Assume that Watson's condition holds. Then: 
\begin{enumerate}
\item 
We have $N(e-\sgn(a_i)e_i)=N(e_i)$ for all $i$. 
\item 
We have $\abs{a_i}\le\frac d2$ for all $i$. 
\item 
If $(e_1,\dots,e_n)$ is a frame of successive 
minima for 

\smallskip\ctl{ 
$\Lb:=\la\Lb',e\ra\,=\ \cup_{k\!\!\mod d}\ k e+\Lb'$\,,} 

\smallskip\noi 
the $e_i$ have equal norms. 
\item 
If moreover $m_1\ge 1$, then $H_b(\Lb)=M(\Lb)$. 
\end{enumerate} 
\end{prop}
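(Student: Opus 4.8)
The whole argument is powered by Watson's identity (Proposition~\ref{propWatsonindentity}): under Watson's condition $A=2d$ its right-hand side is $0$, so
\[
\sum_{i}|a_i|\bigl(N(e-\sgn(a_i)e_i)-N(e_i)\bigr)=0 .
\]
The only other input is the characterizing property of a frame of successive minima $(e_1,\dots,e_n)$ (ordered so that $N(e_1)\le\dots\le N(e_n)$): every $v\in\Lb$ lying outside the linear span of $e_1,\dots,e_{i-1}$ satisfies $N(v)\ge N(e_i)$. I read the hypotheses of (1)--(4) as all including, as made explicit in part~(3), that $(e_1,\dots,e_n)$ is a frame of successive minima for $\Lb=\la\Lb',e\ra$; and I would establish the assertions in the order (2), (1), (3), (4).

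\emph{Parts (2) and (1).} Part~(2) is a normalization: if $|a_i|>d/2$ for some $i$, then $e':=e-\sgn(a_i)e_i$ is again a generator of $\Lb/\Lb'$ (it is congruent to $e$ modulo $\Lb'$), and its coefficient vector has $\ell^1$-norm $(2d-|a_i|)+(d-|a_i|)=3d-2|a_i|<2d$; so for the reduced representative of the coset to which Watson's condition $A=2d$ refers, all $|a_i|\le d/2$, in particular $|a_i|<d$. For part~(1), it follows that the $e_i$-coordinate of $e-\sgn(a_i)e_i$ is $\sgn(a_i)\tfrac{|a_i|-d}{d}\ne 0$, so that vector lies outside the span of $e_1,\dots,e_{i-1}$ and hence $N(e-\sgn(a_i)e_i)\ge N(e_i)$. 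Thus every summand in the displayed identity is $\ge 0$; a vanishing sum of nonnegative terms is termwise $0$, and $|a_i|\ge 1$ by Watson's condition, so $N(e-\sgn(a_i)e_i)=N(e_i)$ for all $i$.

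\emph{Parts (3) and (4).} For $i<n$ the $e_n$-coordinate of $e-\sgn(a_i)e_i$ is $a_n/d\ne 0$, so this vector lies outside the span of $e_1,\dots,e_{n-1}$; hence $N(e-\sgn(a_i)e_i)\ge N(e_n)$, and by~(1) the left side is $N(e_i)$, giving $N(e_i)\ge N(e_n)$. With $N(e_i)\le N(e_n)$ this forces $N(e_i)=N(e_n)$ for every $i$, so all the $e_i$ have the common norm $c:=\min\Lb$, which is part~(3). For part~(4): a frame of successive minima attains the minimum defining $M$, whence $M(\Lb)\det(\Lb)=N(e_1)\cdots N(e_n)=c^n$, while $H_b(\Lb)\ge M(\Lb)$ always (bases form a subset of the independent $n$-tuples); so it is enough to exhibit a basis of $\Lb$ made of vectors of norm $c$. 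By~(1) and~(3) each $w_i:=e-\sgn(a_i)e_i$ has norm $c$. Using $m_1\ge 1$, fix $p$ with $|a_p|=1$ and any $i\ne p$, and replace $e_p$ by $w_i$ in $\cB$: the resulting $n$-tuple consists of norm-$c$ vectors, and its coordinate matrix with respect to $\cB$ is the identity with the $p$-th column replaced by the coordinates of $w_i$, whose $p$-th entry equals $a_p/d$ (as $p\ne i$); that matrix has determinant $\pm a_p/d=\pm 1/d=\pm 1/[\Lb:\Lb']$, so the new $n$-tuple spans a sublattice of $\Lb$ of the same covolume as $\Lb$, i.e.\ it is a basis of $\Lb$ consisting of minimal vectors. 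Hence $H_b(\Lb)\le c^n/\det(\Lb)=M(\Lb)$, and equality follows.

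The two places that need care are: in~(2), being precise about which representative of the coset $e+\Lb'$ ``Watson's condition'' designates --- I take it to be the $\ell^1$-minimal one; and in~(4), producing an honest basis rather than merely $n$ independent minimal vectors --- the naive exchange $e_p\leftrightarrow w_p$ gives a sublattice of index $d-1$, so one must instead swap in $w_i$ for some $i\ne p$ and exploit $|a_p|=1$ to make the index come out to $1$. Everything else is routine bookkeeping with Watson's identity.
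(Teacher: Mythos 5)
Your proof is correct and takes essentially the same route as the paper: vanishing of the right-hand side of Watson's identity, exchange arguments against the frame of successive minima for (2) and (3), and the same swap of a vector $e_p$ with $|a_p|=1$ by $e-\sgn(a_i)e_i$ for some $i\ne p$ in (4). You merely make explicit what the paper leaves implicit — that the successive-minima hypothesis is needed already in (1)–(2) to get nonnegativity of each summand, and the determinant/index check that the exchanged family is genuinely a basis of $\Lb$ (your remark that swapping at the index $p$ itself would give index $d-1$ is exactly why the paper requires $j\ne i$).
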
 

\begin{proof} 
Negating some $e_i$ if need be, we may assume 
that all $a_i$ are positive.

(1) Since the right hand side in Watson's identity is zero, 
we have $a_i(N(e-e_i)-N(e_i))=0$ for all $i$. 

(2) If $a_i$ is larger than $\frac d2$ for some~$i$, 
then replacing $e$ by $e-e_i$ in Watson's identity changes 
$A$ into $A+(d-2 a_i)<2d$. 

(3) Suppose that $N(e_i)<N(e_{i+1})$ for some~$i$. 
By (2), replacing $e_{i+1}$ by $e-e_i$, we still have 
a system of independent vectors, with $N(e-e_i)=N(e_i)$ by~(1), 
which contradicts the fact that $(e_1,\dots,e_n)$ is a frame 
of successive minima. 

(4) Choose $i$ with $a_i=1$. 
Then replacing $e_i$ by $e-e_j$ for some $j\ne i$, 
we obtain a basis for $\Lb$ made of vectors of norm $\min\Lb'$. 
\newline{\small 
[Note that the equality $H_g(\Lb)=M(\Lb)$ holds even if $m_1=0$.]} 
\end{proof} 


\subsection{A crude bound} 
We consider a frame $\cB=(e_1,\dots,e_n)$ of successive minima 
for a lattice $\Lb$, denoting by $\Lb'$ 
the lattice with basis $\cB$, and assume that $\Lb/\Lb'$ is cyclic 
of order~$d$, writing $\Lb=\la\Lb',e\ra$ 
with $e=\frac{a_1 e_1+\dots a_n e_n}d$. 
Reducing modulo $d$ the numerator of $e$ and negating some $e_i$, 
we may and shall assume 
that the $a_i$ satisfy $0\le a_i\le\frac d2$. 

\begin{prop} \label{propcrudebound} 
With the hypotheses above, we have 
$$ N(e)\le\frac{\sum_{i=1}^n\,a_i N(e_i)
\sum_{j=i}^n\,a_j}{d^2}\,,$$ 
and in particular, 
$$ N(e)\le\frac{\sum_i\,m_i(m_i+1)/2\cdot i^2+%
\sum_{i<j}\,m_i m_j\cdot i j}{d^2}\,N(e_n)\,.$$ 
\end{prop}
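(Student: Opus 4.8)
The plan is to expand $N(e)$ directly and then apply Watson's identity to bound the cross terms. Write $e=\frac{1}{d}\sum_i a_i e_i$, so that
\[
d^2 N(e)=\sum_{i,j} a_i a_j\,(e_i\cdot e_j)=\sum_i a_i^2 N(e_i)+2\sum_{i<j} a_i a_j\,(e_i\cdot e_j).
\]
The key point is to control each inner product $e_i\cdot e_j$. Using the polarization identity $2\,(e_i\cdot e_j)=N(e_i)+N(e_j)-N(e_i-e_j)$, and the fact that $(e_1,\dots,e_n)$ is a frame of successive minima for $\Lb'$, so $N(e_i-e_j)\ge N(e_{\max(i,j)})$ when we assume the $e_i$ are ordered by increasing norm, I would obtain $2\,(e_i\cdot e_j)\le N(e_i)+N(e_j)-N(e_{\max(i,j)})=N(e_{\min(i,j)})$. (One must first arrange signs so that all $a_i\ge 0$ and the $e_i$ are ordered; negating $e_j$ replaces $e_i-e_j$ by $e_i+e_j$, whose norm is again $\ge N(e_{\max(i,j)})$ by the same successive-minima argument, since $e_i\pm e_j$ is not among the first $\max(i,j)$ successive minima directions.) Substituting $2(e_i\cdot e_j)\le N(e_{\min(i,j)})=N(e_i)$ for $i<j$ gives
\[
d^2 N(e)\le \sum_i a_i^2 N(e_i)+\sum_{i<j} a_i a_j\, N(e_i)
=\sum_{i=1}^n a_i N(e_i)\Big(a_i+\sum_{j>i}a_j\Big)
=\sum_{i=1}^n a_i N(e_i)\sum_{j=i}^n a_j,
\]
which is the first displayed inequality.

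For the second inequality I would simply bound $N(e_i)\le N(e_n)$ for every $i$ in the estimate just obtained, giving
\[
d^2 N(e)\le N(e_n)\sum_{i=1}^n a_i\sum_{j=i}^n a_j = N(e_n)\sum_{i\le j} a_i a_j,
\]
and then reorganize the double sum $\sum_{i\le j}a_i a_j$ by grouping indices according to the common value $|a_i|$. If $|a_i|=|a_j|=k$ with $i\le j$ there are $m_k(m_k+1)/2$ such pairs (including $i=j$), each contributing $k^2$; if $|a_i|=k<\ell=|a_j|$, or vice versa, the total over all such unordered pairs is $m_k m_\ell\cdot k\ell$. Summing over all values yields $\sum_{i\le j}a_i a_j=\sum_k m_k(m_k+1)/2\cdot k^2+\sum_{k<\ell} m_k m_\ell\cdot k\ell$, which is exactly the claimed bound.

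The only delicate point is the sign-and-ordering bookkeeping in the first step: one needs to verify that after replacing $e$ by its reduced form with $0\le a_i\le d/2$ and reordering the $e_i$ by increasing norm, the inequality $N(e_i\pm e_j)\ge N(e_{\max(i,j)})$ really does follow from successive minimality, for both choices of sign. This is because $e_i-e_j$ (and $e_i+e_j$) lies in $\Lb'$ but outside the span of $e_1,\dots,e_{\max(i,j)-1}$ together with... more precisely, $e_i\pm e_j\notin \la e_1,\dots,e_{j-1}\ra$ when $j=\max(i,j)$, since its $e_j$-coordinate is $\pm 1\ne 0$; hence its norm is at least the $j$-th successive minimum $N(e_j)$. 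Everything else is the routine algebraic manipulation above, so I expect no real obstacle beyond stating this ordering hypothesis carefully (which the paragraph preceding the proposition already sets up).
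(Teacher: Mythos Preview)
Your proof is correct and follows essentially the same route as the paper: expand $d^2 N(e)$ bilinearly, bound each cross term via $2\,e_i\cdot e_j=N(e_i)+N(e_j)-N(e_j-e_i)\le N(e_i)$ for $i<j$ (using that $e_j-e_i$ lies outside $\la e_1,\dots,e_{j-1}\ra$, hence has norm $\ge N(e_j)$), and then regroup. One cosmetic point: you announce ``Watson's identity'' in your opening sentence but never actually invoke it---nor does the paper; the only ingredients are polarization and the successive-minima ordering.
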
 

\begin{proof} 
Just develop the expression of $e$, and observe that if $i<j$ 
(because the $e_i$ are successive minima), 
we have $N(e_j-e_i)\ge N(e_j)$, hence 

\smallskip\ctl{ 
$2\,e_i\cdot e_j=N(e_i)+N(e_j)-N(e_j-e_i)\le N(e_i)$} 
\end{proof} 

We shall use this crude bound to bound the norm of vectors 
\linebreak 
$e-e_i$ or $e-e_i-e_j$ by successive applications of Watson's 
identity, and also sometimes prove improvements for a convenient 
choice of $e$, as in Lemma~\ref{lemindex2} below. 
We quote as a corollary the case of equal $a_i$, 
the proof of which is and easy consequence of the inequalities 

\smallskip\ctl{
$(n-k+1)N(e_k)+k N(e_{n-k+1})\le\frac{n+1}2\,(N(e_k)+N(e_{n-k+1})$\,.} 

\noi for $k=1,\dots,\lf\frac n2\rf$\,: 
\begin{corol} \label{corcrudebound} 
If $e=\dfrac{e_1+\dots+e_n}d$, then 
$$ N(e)\le\frac{n+1}{2d^2}\,\sum_{i=1}^n\,N(e_i)\le 
\frac{n(n+1)}{2d^2}\,N(e_n)\,.\qed$$ 
\end{corol}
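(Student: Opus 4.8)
The plan is to obtain the corollary from Proposition~\ref{propcrudebound} by a short rearrangement argument. First I would specialise that proposition to $a_1=\dots=a_n=1$: then $\sum_{j=i}^n a_j=n-i+1$, so the first displayed bound there reads
\[
N(e)\le\frac1{d^2}\sum_{i=1}^n(n-i+1)\,N(e_i).
\]
Everything then reduces to the numerical inequality $\sum_{i=1}^n(n-i+1)N(e_i)\le\frac{n+1}2\sum_{i=1}^nN(e_i)$; granting it, the second asserted bound follows immediately since $N(e_i)\le N(e_n)$ for every $i$ (the $e_i$ being successive minima, $e_n$ has the largest norm), whence $\sum_iN(e_i)\le nN(e_n)$.

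To prove the numerical inequality I would pair the index $k$ with the index $n-k+1$, for $k=1,\dots,\lfloor n/2\rfloor$, leaving the single middle index $k=(n+1)/2$ unpaired when $n$ is odd (where it carries coefficient exactly $(n+1)/2$, so it contributes precisely what is needed). Within a pair, the coefficients of $N(e_k)$ and $N(e_{n-k+1})$ are $n-k+1$ and $k$, summing to $n+1$; the stated inequality
\[
(n-k+1)N(e_k)+k\,N(e_{n-k+1})\le\frac{n+1}2\bigl(N(e_k)+N(e_{n-k+1})\bigr)
\]
is, after cancellation, just $\frac{n-2k+1}2\bigl(N(e_k)-N(e_{n-k+1})\bigr)\le0$, which holds because $n-2k+1\ge0$ for $k\le\lfloor n/2\rfloor$ and because $k\le n-k+1$ forces $N(e_k)\le N(e_{n-k+1})$, again by the successive-minima property. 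Summing over all pairs (plus the middle term for odd $n$) gives the claim. Equivalently, one can invoke Chebyshev's sum inequality for the oppositely ordered finite sequences $\bigl(N(e_i)\bigr)_i$ (non-decreasing) and $(n-i+1)_i$ (non-increasing), which yields the same bound in one line.

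I do not foresee a genuine obstacle: the argument is entirely elementary, and the only points requiring a little care are the parity bookkeeping for the lone middle term when $n$ is odd, and keeping straight the direction $N(e_k)\le N(e_{n-k+1})$ --- the single place where the hypothesis that $(e_1,\dots,e_n)$ is a frame of successive minima is used. If one prefers, the whole of the numerical step can be replaced by the Chebyshev-sum remark, making the proof a two-line deduction from Proposition~\ref{propcrudebound}.
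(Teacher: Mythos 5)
Your proof is correct and follows exactly the route the paper indicates: specializing Proposition~\ref{propcrudebound} to $a_1=\dots=a_n=1$ and then using the pairing inequality $(n-k+1)N(e_k)+kN(e_{n-k+1})\le\frac{n+1}2\bigl(N(e_k)+N(e_{n-k+1})\bigr)$ for $k\le\lfloor n/2\rfloor$, which is precisely the inequality the paper quotes as the basis of its (omitted) proof. The Chebyshev-sum remark is a harmless repackaging of the same step.
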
 

When constructing bases for $\Lb$ from a frame $\cB$ of successive 
minima, we shall replace some vectors $e_i$ of $\cB$, including $e_n$, 
by convenient vectors $f_i\in\Lb\sm\Lb'$. 
We shall then have to bound a product $\prod\frac{N(f_i)}{N(e_i)}$, 
where in practice, $i$ is the largest subscript in the support 
of the numerator of~$f_i$. 

\smallskip 

Our results solely depend on the similarity class 
of~$\Lb$. For these reason we shall often assume 
from Section~\ref{secdim<=7} onwards that $\Lb$ is scaled 
so that $N(e_n)=1$.

\section{2-elementary quotients} 
\label{secdenom2} 

In this section we apply the theory of binary codes to 
obtain bounds for $H_b(\Lb)/M(\Lb)$ when $\Lb/\Lb'$ 
is $2$-elementary. The results we obtain together with those 
of Section~\ref{secbackground} suffice to prove 
Theorem~\ref{thHermMin} in dimensions $n\le 6$. 

\subsection{Index 2} \label{subsec3.1} 
In this subsection we assume that $\Lb=\la\Lb',e\ra$ 
where $e=\dfrac{e_1+\dots+e_n}2$. 

\begin{lemma} \label{lemindex2} 
Let $\cS=\Big\{\dfrac{e_1\pm e_2\dots\pm e_n}2\Big\}$. 
\begin{enumerate} 
\item 
There exists $x\in\cS$ of norm 
$N(x)\le\dfrac{N(e_1)+\dots+N(e_n)}4$, 
and equality is needed \ifff the $e_i$ are pairwise orthogonal. 
\item 
If all vectors in $\cS$ have a norm $N\ge\max\,N(e_i)$, 
then we have $N(x)\le\frac n4\,\max N(e_i)$, 
and equality holds \ifff the $e_i$ have equal norms and 
$\Lb$ is the centred cubic lattice constructed on the~$e_i$. 
\end{enumerate} 
\end{lemma}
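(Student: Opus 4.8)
The statement splits into two inequalities, and the natural strategy is to compute $N(x)$ for a suitable $x \in \cS$ by averaging. First I would fix signs and write $x = \frac{e_1 + \varepsilon_2 e_2 + \dots + \varepsilon_n e_n}{2}$ with $\varepsilon_i = \pm 1$; then
$$4 N(x) = \sum_{i=1}^n N(e_i) + 2\sum_{i<j} \varepsilon_i \varepsilon_j\, e_i \cdot e_j,$$
where we set $\varepsilon_1 = 1$. The key observation is that if one averages $2\sum_{i<j}\varepsilon_i\varepsilon_j\, e_i\cdot e_j$ over all $2^{n-1}$ choices of signs $(\varepsilon_2,\dots,\varepsilon_n)$, every cross term $e_i\cdot e_j$ (for $i \ne j$) appears with coefficient $+1$ exactly as often as with coefficient $-1$, so the average of the cross-term sum is $0$. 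Hence the average of $4N(x)$ over $\cS$ equals $\sum_i N(e_i)$, and therefore some $x \in \cS$ satisfies $4N(x) \le \sum_i N(e_i)$, which is (1). For the equality case in (1): equality forces $N(x) = \frac{\sum_i N(e_i)}{4}$ for every $x \in \cS$ (since the minimum equals the average only when all values coincide), and comparing two sign vectors that differ in exactly one coordinate $j$ shows $\sum_{i \ne j} \varepsilon_i\, e_i \cdot e_j = 0$ for all admissible sign patterns; a second averaging argument (or choosing two patterns differing only in one other slot) then forces each individual $e_i \cdot e_j = 0$. Conversely pairwise orthogonality makes every element of $\cS$ have norm exactly $\frac{\sum_i N(e_i)}{4}$.

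For (2), assume every $y \in \cS$ has $N(y) \ge \max_i N(e_i) =: \mu$. Pick the $x \in \cS$ furnished by (1); I want to bound $N(x)$ above by $\frac{n}{4}\mu$. The idea is that the hypothesis $N(y) \ge \mu$ for all $y \in \cS$ — in particular for the "twists" of $x$ obtained by flipping one sign — gives, via the identity above, lower bounds on the individual cross terms $\varepsilon_i \varepsilon_j\, e_i\cdot e_j$, and combining these with $4N(x) = \sum_i N(e_i) + 2\sum_{i<j}\varepsilon_i\varepsilon_j\, e_i\cdot e_j$ and $\sum_i N(e_i) \le n\mu$ will pin $N(x)$ down. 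Concretely: flipping the sign of $e_j$ in $x$ changes $4N(x)$ by $-4\big(\sum_{i\ne j}\varepsilon_i\varepsilon_j\, e_i\cdot e_j\big)$, so writing $\sigma_j := \sum_{i \ne j}\varepsilon_i\varepsilon_j\, e_i \cdot e_j$ we get $4N(x) - 4\sigma_j \ge 4\mu$, i.e. $\sigma_j \le N(x) - \mu$ for each $j$. Summing over $j=1,\dots,n$ gives $\sum_j \sigma_j = 2\sum_{i<j}\varepsilon_i\varepsilon_j\, e_i\cdot e_j = 4N(x) - \sum_i N(e_i)$ on the left, and $\le n(N(x)-\mu)$ on the right, hence $4N(x) - \sum_i N(e_i) \le nN(x) - n\mu$, which rearranges to $(n-4)N(x) \ge n\mu - \sum_i N(e_i) \ge n\mu - n\mu = 0$ — this is the wrong direction for $n > 4$, so I would instead use $N(x) \le \min_{y\in\cS}N(y)$ more carefully, or bound via: since $x$ is the minimum over $\cS$, the flipped vectors satisfy $N(x^{(j)}) \ge N(x)$ as well (not just $\ge \mu$), giving $\sigma_j \le 0$ for all $j$, whence $4N(x) = \sum_i N(e_i) + \sum_j \sigma_j \le \sum_i N(e_i) \le n\mu$, which is exactly $N(x) \le \frac n4 \mu$.

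**The main obstacle** will be the equality discussion in (2): I must show $N(x) = \frac n4 \mu$ forces all $e_i$ to have equal norm and $\Lb = C_n$ (the centred cubic lattice on the $e_i$). From the chain of inequalities, equality propagates: $\sum_i N(e_i) = n\mu$ forces $N(e_i) = \mu$ for all $i$ (all norms equal), and $\sigma_j = 0$ for all $j$ together with $N(x)$ being the minimum over $\cS$ — i.e. $x$ also realizes equality in part (1)'s equality characterization applied to the rescaled frame — forces $e_i \cdot e_j = 0$ for all $i \ne j$ by the argument from (1). Orthogonal $e_i$ of equal norm with $e = \frac{e_1+\dots+e_n}{2}$ is precisely $C_n$ up to scaling, and one should double-check that this frame is indeed a frame of successive minima and that the hypothesis "$N(y)\ge\mu$ for all $y\in\cS$" is genuinely satisfied by $C_n$ (it is: all $\frac{e_1\pm\dots\pm e_n}{2}$ have norm $\frac n4\mu \ge \mu$ since $n\ge 4$). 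Conversely, for $C_n$ one computes directly that the minimum over $\cS$ is $\frac n4\mu$. The only delicate point is making the "equality in (1) for the minimizer" step rigorous when not all elements of $\cS$ are equal — here I would argue that $\sigma_j \le 0$ for all $j$ plus $\sum_j\sigma_j = 0$ forces $\sigma_j = 0$ for all $j$, then flip pairs of signs to isolate each $e_i\cdot e_j$.
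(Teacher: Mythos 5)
Your proposal is correct, but it takes a partly different route from the paper. For the inequality in (1) the paper fixes the vector $e$ of smallest norm in $\pm\cS$ and applies Watson's identity (equivalently, sums the single-flip inequalities $N(e-e_i)\ge N(e)$), whereas you average $4N(x)=\sum_i N(e_i)+2\sum_{i<j}\varepsilon_i\varepsilon_j\,e_i\cdot e_j$ over all sign patterns so that the cross terms cancel; this is more elementary (no Watson identity needed) and it makes the equality case of (1) clean, since minimum $=$ average forces every element of $\cS$ to have the same norm, after which comparing patterns differing in one, then in one further, coordinate gives $e_i\cdot e_j=0$. For (2), your corrected argument (single flips at the minimizer give $\sigma_j\le 0$, hence $4N(x)\le\sum_i N(e_i)\le n\max_i N(e_i)$) is exactly the content of the paper's Watson-identity computation; note that the bound in (2) already follows from (1) together with $\sum_i N(e_i)\le n\max_i N(e_i)$, which is all the paper uses, and that the hypothesis ``every vector of $\cS$ has norm $\ge\max_i N(e_i)$'' plays no role in the inequality itself. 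One small point to make explicit in the equality case of (2): flipping a pair of signs at the minimizer only yields $\varepsilon_i\varepsilon_j\,e_i\cdot e_j\ge 0$, not the value of $e_i\cdot e_j$; orthogonality then follows because each $\sigma_j$ is a sum of these nonnegative terms and vanishes. You have both ingredients, so this is a wording issue rather than a gap. The paper instead forces $N(e-e_i)=N(e)$ and $N(e-e_i-e_j)=N(e)$ via Watson's identity and concludes with the polarization identity $N(e-e_i-e_j)+N(e)=N(e-e_i)+N(e-e_j)+2\,e_i\cdot e_j$; its advantage is that the same Watson machinery is reused throughout the later sections, while your averaging argument is self-contained and more transparent here.
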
 

\begin{proof} 
Negating some $e_i$ if need be, we may assume that $e$ 
is the shortest of the vectors $\frac{e_1\pm e_2\dots\pm e_n}2$. 
We thus have $N(e-e_i)\ge N(e)$ for $i=1,\dots,n$. 
Summing on $i$ and applying Watson's identity for~$e$, we obtain 

\medskip\ctl{\small 
$n N(e)\le\sum_i\big(N(e-e_i)-N(e_i)\big)+\sum_i N(e_i)
=(n-4)N(e)+\sum_i N(e_i)$\,,} 

\medskip\noi 
i.e., $N(e)\le\frac{N(e_1)+\dots+N(e_n)}4$. 

\medskip 

If equality holds, we must have $N(e-e_i)=N(e)$ for all~$i$. 
Watson's identity for $e-e_i$, which reads 

\medskip\ctl{\small 
$\big(N(e)-N(e_i)\big)+\sum_{j\ne i}\big(N(e-e_i-e_j)-N(e_j)\big)= 
(n-4)N(e-e_i)$\,,\quad$(\ast)$} 

\medskip\noi 
implies $\sum_{j\ne i}\,N(e-e_i-e_j)=(n-1)N(e)$. 
Since $N(e-e_i-e_j)\ge N(e)$, 
the $(n-1)$ terms $N(e-e_i-e_j)$ must be equal to $N(e)$ 
for all distinct subscripts $i,j$. 
The identity 

\smallskip\ctl{ 
$N(e-e_i-e_j)+N(e)=N(e-e_i)+N(e-e_j)+2e_i\cdot e_j$} 

\smallskip\noi 
then shows that all scalar products $e_i\cdot e_j$ must be zero. 

The converse is clear. This completes the proof of (1). 

\medskip 






Still assuming that $e$ has the smallest norm on $\cS$, we may assume 
that we have $N(e_1)\le\dots\le N(e_n)$. We then clearly have 
$$\dfrac{N(e_1)+\dots+N(e_n)}4\le \frac n4\,N(e_n)\,,$$ 
and the inequality is strict unless all $e_i$ have the same norm as~$e_n$. 
Then $\Lb$ is a centred cubic lattice, and conversely 
centred cubic lattices satisfy $N(x)=\frac n4\,N(e_i)$ for all $x\in\cS$. 
\end{proof} 

\begin{corol} \label{corindex2} 
If a lattice $\Lb$ contains to index $\imath=2$ a sublattice $\Lb'$ 
generated by successive minima of $\Lb$, 
then $\dfrac{H_b(\Lb)}{M(\Lb)}$ is bounded from above by $\frac n4$, 
and equality holds \ifff $\Lb$ is a centred cubic lattice. 
\end{corol}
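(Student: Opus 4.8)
The plan is to deduce Corollary~\ref{corindex2} directly from Lemma~\ref{lemindex2} together with Proposition~\ref{propWatsoncond}(4). First I would set up notation: scale $\Lb$ so that $N(e_n)=1$ where $(e_1,\dots,e_n)$ is a frame of successive minima for $\Lb$ generating $\Lb'$ with $[\Lb:\Lb']=2$, and write $\Lb=\la\Lb',e\ra$ with $e=\frac{e_1+\dots+e_n}2$ (after possibly negating some $e_i$ and reducing the numerator modulo~$2$; note every coefficient is forced to be $1$ since otherwise $e\in\Lb'$). Since $M(\Lb)=N(e_1)\cdots N(e_n)$ is computed on the successive minima themselves (van der Waerden's bound or the very definition of $M$ together with the fact that the $e_i$ realize the successive minima), while $H_b(\Lb)$ is the minimum of $N(f_1)\cdots N(f_n)$ over bases, the quotient $Q_b(\Lb)=H_b(\Lb)/M(\Lb)$ is at most $\prod_i \frac{N(f_i)}{N(e_i)}$ for \emph{any} basis $(f_1,\dots,f_n)$ of $\Lb$ we care to exhibit.

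The key step is to exhibit a good basis. I would take the vector $x\in\cS=\{\frac{e_1\pm e_2\cdots\pm e_n}2\}$ furnished by Lemma~\ref{lemindex2}; replacing $e_n$ by $x$ gives a basis $(e_1,\dots,e_{n-1},x)$ of $\Lb$, so $Q_b(\Lb)\le N(x)/N(e_n)=N(x)$. Now there are two cases. If some vector $y\in\cS$ has $N(y)<\max_i N(e_i)=1$, then $y$ together with $n-1$ suitable $e_i$'s is a frame with product of norms strictly smaller than $M(\Lb)$, contradicting that the $e_i$ are successive minima (more precisely, $y\notin\Lb'$ and one checks $\la e_1,\dots,\widehat{e_j},\dots,e_n,y\ra=\Lb$ for an appropriate $j$, so $y$ would have to be one of the successive minima; a short argument with Proposition~\ref{propWatsoncond}(3) rules this out unless $N(y)=1$). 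Hence we are in the hypothesis of Lemma~\ref{lemindex2}(2): all vectors of $\cS$ have norm $\ge\max N(e_i)$, so $N(x)\le\frac n4\max N(e_i)=\frac n4$, giving $Q_b(\Lb)\le\frac n4$.

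For the equality case, suppose $Q_b(\Lb)=\frac n4$. Then the basis $(e_1,\dots,e_{n-1},x)$ forces $N(x)=\frac n4$, so equality holds in Lemma~\ref{lemindex2}(2), whence the $e_i$ have equal norms and $\Lb$ is the centred cubic lattice built on the $e_i$. Conversely, for a centred cubic lattice $\Lb=C_n$ one has $M(\Lb)=N(e_n)^n$ while every vector of norm $N(e_n)/1$... — more carefully, one computes $H_b(C_n)$ and $M(C_n)$ directly: $M(C_n)=1$ after scaling, and the minimum over bases of the product of norms is $\frac n4$, realized by replacing one $e_i$ by $e$ itself (which has norm $\frac n4$); that no basis does better follows from the second part of Lemma~\ref{lemindex2}. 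This gives $Q_b(C_n)=\frac n4$, completing the equivalence.

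The main obstacle I anticipate is the bookkeeping in the ``otherwise'' sub-case: one must be careful that replacing $e_n$ (or whichever $e_j$) by a vector of $\cS$ really yields a \emph{basis} of $\Lb$ and not a proper sublattice, and one must correctly invoke the successive-minima property to conclude that no element of $\cS$ can be shorter than $e_n$. Proposition~\ref{propWatsoncond} (with Watson's condition $A=\sum|a_i|=n=2d$, i.e. exactly the case $d=2$, $n$ even — and for $n$ odd a small separate remark, since then $A=n\neq 2d$ and one argues slightly differently, or one simply notes the lemma's hypothesis is about $\cS$ directly and does not need Watson's condition) is the right tool, and the scaling convention $N(e_n)=1$ keeps the estimates transparent. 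Everything else is an immediate translation of Lemma~\ref{lemindex2} into the language of $H_b$ and $M$.
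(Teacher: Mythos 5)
Your proposal follows essentially the same route as the paper: replace $e_n$ by the short vector $x\in\cS$ produced by Lemma~\ref{lemindex2}, note that $(e_1,\dots,e_{n-1},x)$ is a basis of $\Lb$, and read the bound and the equality characterization off part (2) of the lemma. One claim, however, is false as stated: ``every coefficient is forced to be $1$ since otherwise $e\in\Lb'$''. For $[\Lb:\Lb']=2$ one only gets $e=\tfrac{1}{2}\sum_{i\in S}e_i$ for some non-empty subset $S$ of $\{1,\dots,n\}$, and $S$ may well be proper (take $\Lb$ to be the orthogonal sum of a $4$-dimensional centred cubic lattice with $\Z^{n-4}$: the unit vectors are a frame of successive minima generating $\Z^n$ of index $2$, and $e=\tfrac{e_1+e_2+e_3+e_4}{2}$ has support of size $4<n$). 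The omitted case is easy but must be covered, since your argument as written only treats full support: with $w=\abs S$ and $j=\max S$, replace $e_j$ by the short vector of $\tfrac12\{\sum_{i\in S}\pm e_i\}$; every such vector has nonzero $e_j$-coefficient, hence lies outside the span of $e_1,\dots,e_{j-1}$ and has norm at least $N(e_j)=\max_{i\in S}N(e_i)$, so Lemma~\ref{lemindex2}(2) gives $Q_b(\Lb)\le\frac w4<\frac n4$, consistent with equality forcing $w=n$.

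A smaller remark: the case distinction in your second paragraph is unnecessary. Any $y\in\cS$ has nonzero coefficient on $e_n$, hence lies outside the span of $e_1,\dots,e_{n-1}$, so $N(y)\ge N(e_n)=\max_i N(e_i)$ follows directly from the definition of successive minima; no contradiction argument and no appeal to Proposition~\ref{propWatsoncond}(3) is needed, and this is exactly why the hypothesis of Lemma~\ref{lemindex2}(2) is automatic in the corollary (which is all the paper's one-line proof uses). Your treatment of the equality case is fine; for the converse the only fact needed is that every basis of a centred cubic lattice contains a vector of the nontrivial coset, and all such vectors have norm at least $\frac n4\min\Lb$.
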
 

\begin{proof} 
Just apply Lemma~\ref{lemindex2} to a frame of successive minima 
$e_1,\dots,e_n$ for $\Lb$ generating a lattice of index~$2$ in~$\Lb$: 
$(e_1,\dots,e_{n-1},e)$ is then a basis for~$\Lb$, so that 
$\dfrac{H_b(\Lb)}{M(\Lb)}=\dfrac{N(e)}{N(e_n)}$. 
\end{proof}

\begin{remark} \label{remdim4} {\small\rm 
Formula $(\ast)$ above shows that when $n=4$, 
all vectors $e_i$, $e$, $e-e_i$, $e-e_i-e_j$ have the same norm. 
The remaining of the proof of Lemma~\ref{lemindex2} 
then shows that $\Lb$ must be a centred cubic lattice.} 
\end{remark} 

\begin{remark} \label{remdenom4} {\rm(Watson)} {\small\rm 
Let $\Lb/\Lb'$ be cyclic of order~$4$, with $\Lb=\la\Lb',e\ra$, 

\smallskip\ctl{ 
$e=\frac{e_1+\dots+e_{m_1}+2(e_{m_1+1}+\dots+e_{m_1+m_2})}4= 
\frac{e'+e_{m_1+1}+\dots+e_{m_1+m_2}}2\,,$} 

\smallskip\noi 
$e'=\frac{e_1+\dots+e_{m_1}}2$. 
Then Watson's identity shows that $m_1>4$ implies $n\ge 7$, 
and Remark~\ref{remdim4} shows that if $m_1=4$, 
then we must have $m_2\ge 3$, hence again $n\ge 7$, 
and that if $m_1=4$ and $n=7$, then $e$ is minimal.
This last conclusion holds more generally under Watson's condition 
if some coefficient $a_i$ is equal to $\frac d2$, since we may then 
apply Watson's identity to $e'=e-e_i$ instead of~$e$.} 
\end{remark} 

\subsection{Binary codes and 2-elementary quotients} \label{subsec2elem} 
In this subsection we consider a pair of lattices $\Lb$ and 
$\Lb'\subset\Lb$ such that $\Lb/\Lb'$ is $2$-elementary 
of order $2^k$ 
and $\min\Lb=\min\Lb'$. We choose a basis 
$\cB=(e_1,\dots,e_n)$ for $\Lb'$ and denote by $C$ 
the binary code (of length~$n$ and dimension~$k$) 
defined by $(\Lb,\Lb',\cB)$. Since $\min\Lb=\min\Lb'$, 
$C$ has weight $w\ge 4$. 

\begin{prop} \label{prop2elem} 
Assume that $\cB$ is a frame of successive minima for~$\Lb$. 
\begin{enumerate} 
\item 
We have $\dfrac{H_g(\Lb)}{M(\Lb)}\le 
\dfrac{\min\{\wt(\a_1)\cdots\wt(\a_k)\}} {4^k}\,,$ 
where the minimum is taken over all bases 
$\a_1,\dots,\a_k$ for $C$. 
\item 
Assume that $C$ is irreducible (which implies that 
the support of $C$ is the whole set $\{1,\dots,n\}$). 
Then if equality holds in (1), 
the $e_i$ have equal norms. 
\item 
If $k\le 2$ the conclusions of (1) and (2) hold for 
$\dfrac{H_b(\Lb)}{M(\Lb)}$. 
\end{enumerate} 
\end{prop}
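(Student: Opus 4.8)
\emph{The common engine} is Lemma~\ref{lemindex2}(1): applied to the family $(e_j)_{j\in\Supp(\a)}$ and the half-sum $\frac12\sum_{j\in\Supp(\a)}e_j$, it produces, for every codeword $\a\in C$, a vector $x_\a=\frac12\sum_{j\in\Supp(\a)}\vp^{(\a)}_j e_j\in\Lb$ (signs $\vp^{(\a)}_j\in\{\pm1\}$) with $N(x_\a)\le\frac14\sum_{j\in\Supp(\a)}N(e_j)$. Reorder $\cB$ so that $N(e_1)\le\dots\le N(e_n)$ (this only permutes the coordinates of $C$); then, writing $\mu(\a)=\max\Supp(\a)$, one has the two bounds used throughout,
\[
N(e_{\mu(\a)})\ \le\ N(x_\a)\ \le\ \tfrac{\wt(\a)}{4}\,N(e_{\mu(\a)})\,,
\]
the right one because $N(e_j)\le N(e_{\mu(\a)})$ on $\Supp(\a)$, the left one because $x_\a\in\Lb\sm\la e_1,\dots,e_{\mu(\a)-1}\ra$, so that $x_\a$ extends the family $e_1,\dots,e_{\mu(\a)-1}$ to $\mu(\a)$ independent vectors of $\Lb$ and $\cB$ is a frame of successive minima.

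\emph{Proof of (1).} Fix a basis $\a_1,\dots,\a_k$ of $C$ minimizing $w_1\cdots w_k$, where $w_i=\wt(\a_i)$, $S_i=\Supp(\a_i)$. It suffices to produce a generating set $\cG$ of $\Lb$ such that $\prod_{v\in F}N(v)\le\frac{w_1\cdots w_k}{4^k}\,N(e_1)\cdots N(e_n)$ for every $n$-element independent subset $F\subset\cG$: since $\cB$ is a frame of successive minima this gives $H_g(\Lb)/M(\Lb)\le w_1\cdots w_k/4^k$, and minimizing over bases gives the stated form. I would take $\cG=\{e_1,\dots,e_n\}\cup\{x_{\a_1},\dots,x_{\a_k}\}$. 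For $F=\{e_j:j\in J\}\cup\{x_{\a_i}:i\in I\}$, independence of $F$ is exactly invertibility of the matrix $\big(\vp^{(\a_i)}_j\,[j\in S_i]\big)_{i\in I,\,j\notin J}$; expanding its determinant yields an injection $\s\colon I\to\{1,\dots,n\}\sm J$ with $\s(i)\in S_i$. Using $N(x_{\a_i})\le\frac{w_i}4N(e_{\mu(\a_i)})$, the fact that $\frac{w_l}4\ge1$ for the unused generators $l\notin I$, and this matching, the required inequality reduces to $\prod_{i\in I}N(e_{\mu(\a_i)})\le\prod_{j\notin J}N(e_j)$. \emph{This is the step I expect to be the real obstacle}: the matching $\s$ only reaches \emph{some} index of each $S_i$, whereas $\mu(\a_i)$ is the \emph{largest-norm} one. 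Resolving it should combine: (a) minimality of the basis, which forces $|S_i\cap S_l|\le\frac12\min(w_i,w_l)$ and restricts how the $\mu(\a_i)$ may coincide; (b) the fact that distinct codewords have distinct supports; (c) the rigidity of weight $4$: a weight-$4$ generator $\a_i$ already makes the $e_j$ ($j\in S_i$) pairwise orthogonal of equal norm and $x_{\a_i}$ a minimal vector, by the equality clause of Lemma~\ref{lemindex2}(1), so it contributes a factor $1$ and cannot cause a violation. When a direct matching is awkward one replaces $\a_1,\dots,\a_k$ by a basis reduced from the top coordinate, so that the $\mu(\a_i)$ become pairwise distinct and one may simply delete $e_{\mu(\a_1)},\dots,e_{\mu(\a_k)}$ from $\cB$ and insert $x_{\a_1},\dots,x_{\a_k}$, at the cost of controlling the resulting change in $w_1\cdots w_k$.

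\emph{Proof of (2) and (3).} For (2), assume $C$ irreducible and that equality holds in (1). Reading the chain of inequalities backwards forces $N(x_{\a_i})=\frac14\sum_{j\in S_i}N(e_j)$ and $\sum_{j\in S_i}N(e_j)=w_i\max_{j\in S_i}N(e_j)$ for every $i$; by the equality clause of Lemma~\ref{lemindex2}(1) the first makes the $e_j$ ($j\in S_i$) pairwise orthogonal, and the second makes them of equal norm. Irreducibility means the hypergraph on $\{1,\dots,n\}$ with edges $S_1,\dots,S_k$ is connected and covers everything (else $C$ splits as a direct sum of codes on disjoint supports), so propagating ``equal norm on $S_i$'' along it gives that all $N(e_i)$ are equal. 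For (3), when $k\le1$ the statement for $H_b$ is the content of Corollary~\ref{corindex2}: for $k=1$, deleting $e_{\mu(\a_1)}$ from $\cB$ and inserting $x_{\a_1}$ gives again a \emph{basis} of $\Lb$. For $k=2$ the combinatorics of (1) is small enough to run by hand and to yield a genuine basis: if $\mu(\a_1)\ne\mu(\a_2)$ one deletes $e_{\mu(\a_1)},e_{\mu(\a_2)}$ and inserts $x_{\a_1},x_{\a_2}$, the $2\times2$ sign-matrix being unimodular; if $\mu(\a_1)=\mu(\a_2)$ one passes to $\{\a_1,\a_1+\a_2\}$, whose top coordinates are now distinct, and combines $|S_1\cap S_2|\le\frac12\min(w_1,w_2)$ with the weight-$4$ rigidity of (c) to recover the bound with the minimizing weight product, and likewise the equality statement.
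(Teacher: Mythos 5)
Your strategy for (1) is the paper's own (lift each $\a_i$ by Lemma~\ref{lemindex2} to $x_i$ with $N(x_i)\le\frac14\sum_{j\in\Supp(\a_i)}N(e_j)$, take $\cG=\cB\cup\{x_1,\dots,x_k\}$, and bound every independent $n$-subset by matching each used $x_i$ to an omitted index of $\Supp(\a_i)$), but the step you yourself flag as ``the real obstacle'' is exactly where the proof stops, and in the form you reduce it to it is not closable: the inequality $\prod_{i\in I}N(e_{\mu(\a_i)})\le\prod_{j\notin J}N(e_j)$ is false in general, already for $k=1$. Take $n=5$, $e_1,\dots,e_5$ pairwise orthogonal with $N(e_1)=\vp$ small and $N(e_2)=\dots=N(e_5)=1$, $C$ the all-ones word, $x_1=\frac{e_1+\dots+e_5}2$, so $N(x_1)=1+\frac\vp4$; then $\cB$ is a frame of successive minima for $\Lb=\la\Lb',x_1\ra$ and $\min\Lb=\min\Lb'$, yet the independent subset $F=\{x_1,e_2,\dots,e_5\}$ of your $\cG$ gives $\prod_{v\in F}N(v)/\prod_jN(e_j)=N(x_1)/N(e_1)\approx 1/\vp$, far above $\frac{\wt(\a_1)}4=\frac54$ (and every lift of the word has norm $\ge 1+\frac\vp4$, so changing the lifts does not help). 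Hence with this generating set no matching argument, and none of your ideas (a)--(c) (which do not even apply when $k=1$), can yield the bound; one must either modify $\cG$ (in the example $\{e_1,e_1+e_2,e_3,e_4,e_5,x_1\}$ works, because its only dependent $5$-subset is precisely the dangerous one) or argue about a single well-chosen independent system, i.e.\ replace in each support an index of maximal norm, which is the $H_b$-type statement. To be fair, the paper's printed proof runs through the same chain --- it chooses $j[i]\in\Supp(\a_i)$ with $e_{j[i]}\notin S$ and asserts $N(x_i)/N(e_{j[i]})\le\wt(\a_i)/4$, an inequality the same example violates --- so you have located a genuine weak point of the original argument rather than missed a tool it supplies; but your text does not repair it, and as written (1) is not proved.

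Your (2) is the paper's argument (equality forces, on each $\Supp(\a_i)$, orthogonality and equal norms via the equality case of Lemma~\ref{lemindex2}, and irreducibility propagates equal norms), and is fine modulo (1); the two-sided estimate $N(e_{\mu(\a)})\le N(x_\a)\le\frac{\wt(\a)}4N(e_{\mu(\a)})$ is also correct and useful. In (3), however, the case $k=2$ with $\mu(\a_1)=\mu(\a_2)$ is only asserted: passing to $\{\a_1,\a_1+\a_2\}$ changes the weights, and since the bound in (1) is the \emph{minimum} of the weight product over bases of $C$, computing with a non-minimizing basis does not directly give the stated bound --- this is precisely what your ``recover the bound \dots and likewise the equality statement'' skips. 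The paper handles $k=2$ differently: it first reduces to the case where neither support contains the other (if $\Supp(\a_2)\subset\Supp(\a_1)$, replacing $\a_1$ by $\a_1+\a_2$ strictly lowers the weight product), so that each $\a_i$ has an index private to its support, and then replaces such an $e_j$ by $x_i$ to get a basis; here too one must take care, for the reason above, to replace indices of large norm. So the proposal establishes (2) but leaves both (1) and the delicate case of (3) open.
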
 

\begin{proof} 
(1) By Corollary~\ref{corindex2} we can lift each word $\a_i$ 
to a vector $x_i\in\Lb$ of norm $N(x_i)\le\frac{\wt(\a_i)}4$. 
A set $S$ of $n$ independent vectors extracted from the set 
$\{x_i,e_i\}$ consists of $\ell\le k$ vectors $x_i$ 
and $n\!-\!\ell$ vectors $e_j$, satisfying the condition: 
for every $i$ there exists $j=j[i]$ in the support of $\a_i$ 
such that $e_j$ is not in~$S$. It is then clear 
that we have 
$$ 
\frac{\prod_{x\in S}\,N(x)}{\prod_{1\le i\le n}\,N(e_i)}\le 
\prod_{i=1}^\ell\,\frac{N(x_i)}{N(e_{j[i]})}\le 
\prod_{i=1}^\ell\,\frac{wt(\a_i)}4\le 
\prod_{i=1}^k\,\frac{wt(\a_i)}4\,. 
$$ 
(The last inequality results from the lower bounds $\wt(\a_i)\ge 4$, 
which hold because $\min\Lb=\min\Lb'$.) 

\smallskip 

(2) Since $C$ is irreducible, we may order $\a_1,\dots,a_k$ 
so that the supports of $\a_i$ and $\a_{i+1}$ have a non-empty 
intersection for every $i<k$. By Lemma~\ref{lemindex2}, 
we have $N(e_i)=N(e_j)$ whenever $i,j$ both belong to the support 
of some $\a_\ell$, and the hypothesis 
$\Supp(\a_i)\cap\Supp(\a_{i+1})\ne\emptyset$ proves (2). 

\smallskip 

(3) If $k=1$, we obtain a basis for $\Lb$ by replacing any $e_i$ 
with $i\in\Supp(x_1)$ by $x_1$. This method clearly extends 
(by induction) to all codes satisfying the condition 

\smallskip\ctl{ 
$\forall\,i,\,\Supp(\a_i)\not\subset\cup_{j\ne i}\,\Supp(\a_j)\,.$} 

\smallskip\noi
This remark applies in particular to codes of dimension~$2$, 
for if there were an inclusion, say, 
$\Supp(\a_2)\subset\Supp(\a_1)$, 
we could replace $\a_1$ by $\a_1+\a_2$, a word of smaller weight. 
\end{proof} 

\begin{remark} \label{remparameters} {\small\rm 
Two vectors $e_i$ and $e_j$ are necessarily orthogonal if $i,j$ belong 
to the support of some $\a_\ell$ (or of some word of weight~$4$), 
but this is not general. For instance, if $C$ is the code 
$[8,2,5]$-code 

\smallskip\ctl{ 
$C_8=\left(\stx 1&1&1&1&1&0&0&0\\0&0&0&1&1&1&1&1\estx\right)$} 

\smallskip\noi 
(see Subsection~\ref{subsecden2n<=10} below), 
the $e_i$ must have equal norm and be pairwise orthogonal 
if $i<j\le 5$ or $4\le i<j$, but we still have 
$\frac{H_b}M=\frac{25}{16}$ on the lifts of $C_8$ provided that 
$\abs{e_i\cdot e_j}$ be small enough for $i=1,2,3$ and $j=6,7,8$ 
so as to have $N(x)\ge \frac 54$ for any $x\in\Lb$ 
which lifts the weight-$6$ word $(1^3\,0^2\,1^3)$. 
Thus the lattices $\Lb$ which lift $C_8$ in a given scale 
(say, $\min\Lb=1$) depend on $9$~parameters.} 
\end{remark}

\subsection{Dimensions up to 10} \label{subsecden2n<=10} 
We consider an $[n\le 10,k\ge 1,w\ge 4]$-binary code $C$. 
We prove for quotients $\Lb/\Lb'$ associated with $C$ the bounds 
for $\frac{H_b(\Lb)}{M(\Lb)}$ announced in the introduction 
($\frac n4$ if $4\le n\le 9$, $\frac{21}8$ if $n=10$), and
characterize the cases when equality holds. 

\smallskip 

We may assume that $k\ge 2$ (since the case when $k=1$ has been dealt 
with in Corollary~\ref{corindex2}), that the support of $C$ is 
the whole set $\{1,\dots,n\}$ 
(since otherwise we may apply results for dimension $n-1$), 
and that $w>4$ (if $wt(\a_1)=4$, we reduce ourselves to the case 
of $\dim C=k-1$ by considering $\la\Lb',x_1\ra$ instead of $\Lb'$). 
Then $C$ contains an even subcode $C_0$ of dimension 
$k-1$ and weight $w_0\ge 6$. 
(Note however that $\abs{\Supp(C_0)}$ 
may be strictly smaller than~$n$.) 

\smallskip 

It is readily verified that for $n\le 8$, every $[n,2,w\ge 4]$-code 
contains a word of weight~$4$, except for the a unique $[8,2,5]$-code 
(the code $C_8$ of Remark~\ref{remparameters}). 
This has weight distribution 
$6\cdot 5^2$, so that its lifts satisfy 
$\frac{H_b(\Lb)}{M(\Lb)}\le\frac{25}{16}<\frac n4=2$. 
This also proves the existence of a weight-$4$ word if $k\ge3$, 
and completes the proof of Theorem~\ref{thn<=8} 
for $2$-elementary quotients. 

\smallskip 

Let now $n=9$ and first $k=2$. It is again readily verified 
that codes of weight $w\ge 5$ and support $\{1,\dots,9\}$ 
have weight distributions 
$8\cdot 5^2$, $6\cdot 5\cdot 7$ or $6^4$ 
and that there exists a unique code for each weight distribution, 
which gives for $\frac{H_b(\Lb)}{M(\Lb)}$ the exact bounds 
$\frac{25}{16}$, $\frac{15}8$ and $\frac 94$, respectively, 
and proves that if $k=3$, $C$ must extend the code $C_9$. 
It is then easily checked that such an extension 
by a word of weight~$5$ (resp.~$6$) must contain a word 
of weight~$3$ (resp.~$4$). 

This completes the proof of Theorem~\ref{thn=9} 
for $2$-elementary quotients. 

\smallskip 

Let now $n=10$ and first $k=2$. We easily check as above 
that codes of weight $w\ge 5$ and support $\{1,\dots,10\}$ 
have weight distributions 
$10\cdot 5^2$, $8\cdot 5\cdot 7$, $6\cdot 5\cdot 9$,
$6\cdot 7^2$, and $6^2\cdot 8$. 
The largest upper bound for $\frac{H_b(\Lb)}{M(\Lb)}$ 
is $\frac{21}8$, attained on a unique code, namely 

\smallskip\ctl{ 
$C_{10}=\left(\stx 1&1&1&1&1&1&0&0&0&0\\ 
0&0&0&0&0&1&1&1&1&1\estx\right)$\,.} 

\smallskip\noi 
This also shows that there are exactly two even $[10,2,w\ge 6]$-codes, 
namely 

\smallskip\ctl{ 
$C_{10a}=\left(\stx 1&1&1&1&1&1&0&0&0&0\\ 
0&0&0&0&1&1&1&1&1&0\estx\right)\ \nd\ 
C_{10b}=\left(\stx 1&1&1&1&1&1&0&0&0&0\\ 
0&0&0&0&1&1&1&1&1&1\estx\right)
$} 

\smallskip\noi 
($C_{10a}$ extends $C_9$). It is an easy exercise to check that 
even extensions to $k=3$ of these codes have weight at most~$4$, 
and that each of these codes has a unique odd extension, of weight~$5$. 
We obtain this way two $[9,3,5]$-codes, with weight distributions 
$6^3\cdot 5^3\cdot 7$ and $6^2\cdot 8\cdot 5^4$, 
so that any lift $\Lb$ of one of these codes satisfies the bound

\smallskip\ctl{ 
$\dfrac{H_b(\Lb)}{M(\Lb)}\le\dfrac{125}{64}<\dfrac{21}8$\,.} 

\smallskip 

We state below as a proposition our result for dimension~$10$. 

\begin{prop} \label{prop2elemdim10} 
Let $\Lb$ be a $10$-dimensional lattice having a frame of successive 
minima generating a lattice $\Lb'\!$ such that $\Lb/\!\Lb'$ 
is $2$-elementary. Then we have 

\ctl{$\dfrac{H_b(\Lb)}{M(\Lb)}\le\dfrac{21}8$\,,} 

\smallskip\noi 
and if equality holds, $\Lb$ is a lift of the code $C_{10}$. \qed 
\end{prop}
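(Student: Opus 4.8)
The plan is to reduce Proposition~\ref{prop2elemdim10} to the case-by-case analysis already carried out in this subsection, organising the argument by the dimension $k$ of the binary code $C$ attached to $(\Lb,\Lb',\cB)$. By Corollary~\ref{corindex2} we may assume $k\ge 2$; by passing to a lower-dimensional lattice we may assume the support of $C$ is all of $\{1,\dots,10\}$; and by the reduction already explained (replacing $\Lb'$ by $\la\Lb',x_1\ra$ when some basis word has weight $4$) we may assume $w=\wt(C)\ge 5$, so that $C$ contains an even subcode $C_0$ of dimension $k-1$ and weight $w_0\ge 6$.

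First I would handle $k=2$. The enumeration stated in the text gives exactly five codes of weight $w\ge 5$ with full support, with weight distributions $10\cdot 5^2$, $8\cdot 5\cdot 7$, $6\cdot 5\cdot 9$, $6\cdot 7^2$ and $6^2\cdot 8$; applying Proposition~\ref{prop2elem}(3) to each, the resulting upper bounds for $H_b(\Lb)/M(\Lb)$ are $\frac{25}{16}$, $\frac{35}{16}$, $\frac{45}{16}$, $\frac{49}{16}$ and $\frac{48}{16}$ — wait, I must recompute: the bound is $\prod \wt(\a_i)/4$ over a \emph{basis}, not over the codewords listed, so for $6\cdot 7^2$ the optimal basis is $\{6,7\}$ giving $\frac{6\cdot 7}{16}=\frac{42}{16}=\frac{21}{8}$, and one checks the other four distributions admit a basis giving a strictly smaller value, the largest being $\frac{21}{8}$ on $C_{10}$. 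By Proposition~\ref{prop2elem}(2), equality on a code with full support forces the $e_i$ to have equal norms, and then one verifies directly (as for centred cubic lattices) that the lift is genuinely the lattice $\Lb_{C_{10}}$; for the other distributions the bound is strict, so $k=2$ contributes only the asserted extremal code.

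Next I would dispose of $k\ge 3$. Since $w\ge 5$, the code $C$ contains an even subcode $C_0$ of dimension $k-1$; I would show that the only even $[10,\ge 2, \ge 6]$-codes with full support are $C_{10a}$ and $C_{10b}$ (the enumeration given in the text), both of dimension $2$, whence $k=3$ exactly and $C$ is an odd extension of one of these. The text states each of $C_{10a},C_{10b}$ has a unique odd extension to $k=3$, yielding $[10,3,5]$-codes with weight distributions $6^3\cdot 5^3\cdot 7$ and $6^2\cdot 8\cdot 5^4$. For each, choosing the cheapest basis $\{\a_1,\a_2,\a_3\}$ and applying Proposition~\ref{prop2elem}(1) (valid here since $k\le 3$ and the support-inclusion reduction of part~(3) applies, or more simply we bound $H_b\le H_g$ is false — so I will instead use part (3) directly, checking the no-inclusion condition on supports), we get $H_b(\Lb)/M(\Lb)\le \frac{5\cdot 5\cdot 5}{64}$ from a weight-$5$ basis, hence $\le\frac{125}{64}<\frac{21}{8}$; thus $k\ge 3$ never attains the bound.

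The main obstacle is the combinatorial enumeration underlying the whole argument: verifying that the five listed weight distributions exhaust the $[10,2,\ge 5]$ full-support codes, that each distribution is realised by a unique code, that the only even $[10,\ge 2,\ge 6]$ full-support codes are $C_{10a}$ and $C_{10b}$, and that their odd extensions are as claimed. I would carry this out by a direct but careful hand enumeration: up to the obvious symmetries one may put a weight-$w$ word $\a_1$ in standard form $(1^w 0^{10-w})$, then classify the possible second generators $\a_2$ by the sizes of the overlaps $\abs{\Supp(\a_1)\cap\Supp(\a_2)}$ and $\abs{\Supp(\a_2)\sm\Supp(\a_1)}$ subject to $\wt(\a_2)\ge 5$ and $\wt(\a_1+\a_2)\ge 5$ and full support; the subcase bookkeeping is finite and routine but must be done with care to be sure nothing is missed. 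Once the code list is pinned down, the norm estimates are immediate from Proposition~\ref{prop2elem}, and the equality discussion for $C_{10}$ follows exactly the pattern of Lemma~\ref{lemindex2}(2) and Remark~\ref{remparameters}.
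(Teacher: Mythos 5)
Your proposal is correct and follows essentially the same route as the paper's own argument in Subsection~\ref{subsecden2n<=10}: the same reductions to $k\ge 2$, full support and $w\ge 5$, the same enumeration of the five $[10,2,w\ge 5]$ weight distributions with the maximum $\frac{21}{8}$ attained only on $C_{10}$, and the same treatment of $k\ge 3$ via the even subcode ($C_{10a}$ or $C_{10b}$) and its unique odd extension, giving the bound $\frac{125}{64}<\frac{21}{8}$. One small correction: the even subcode $C_0\subset C$ need not have full support even when $C$ does (the paper notes this explicitly, and $C_{10a}$, which extends $C_9$ by a zero column, has support of size $9$), so your enumeration of even $[10,2,w\ge 6]$-codes must not be restricted to full-support codes; since your list nevertheless contains both relevant codes, the rest of your argument is unaffected.
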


\subsection{More on index 2} 
We return to the notation of the first subsection, 
Lemma~\ref{lemindex2}, (2), 
but now want for further use to bound the norm of $e$ itself 
rather than that of some suitably chosen vector in $e+\Lb$. 
We write $\Lb=\la\Lb',e\ra$ with $e=\frac{e_1+\dots+e_n}2$, 
and observe that since $\cB=(e_1,\dots,e_n)$ is a frame of successive 
minima for $\Lb$, all vectors $e$, $e-e_i$, $e-e_i-e_j$, etc, 
have a norm larger that $\max_i\,N(e_i)$. 

\smallskip 

In this subsection we shall have to consider 
the Coxeter lattices $\A_5^3$ and $\D_6^+$ and the Coxeter-Barnes 
lattices $\A_n^2$, $n\ge 7$, for the definitions of which we refer 
to \cite{M}, Sections 4.4, 5.1 and~5.2. 
Note that $\A_5^3$ and $\A_n^2$ ($n\ge 7$) are perfect whereas 
$\D_6^+$ is not. 

\begin{notation} \label{nottuvw} 
Set $t=N(e)$, fix a subscript $i$ which minimizes $u:=N(e-e_i)$, 
then a subscript $j\ne i$ which minimizes $v:=N(e-e_i-e_j)$, 
and finally a subscript $k$ which minimizes $w:=N(e-e_i-e_j-e_k)$. 
\end{notation} 

\begin{lemma} \label{lemtuvw} 
\begin{enumerate} 
\item 
We have the inequalities 

(a) $u\le\frac{n+(n-4)t}n$\,; 

(b) $v\le\frac{n+(n-4)u-t}{n-1}\le 
\frac{2n(n-2)+((n-4)^2-n)t}{n(n-1)}$\,; 

(c) $w\le\frac{n+(n-4)v-2u}{n-2}$. 
\item 
\smallskip 
For $i$ as above and any $\ell>0$, we have 

\smallskip 
\ctl{$N(e-\ell e_i)\le\ell u-(\ell-1)t+\ell(\ell-1)$\,.} 
\end{enumerate} \end{lemma}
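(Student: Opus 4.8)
The whole lemma is a sequence of applications of Watson's identity (Proposition~\ref{propWatsonindentity}) to the lattice $\Lb=\la\Lb',e\ra$ with $e=\frac{e_1+\dots+e_n}2$, combined with the defining minimality of $t,u,v,w$ and the fact that $\cB$ is a frame of successive minima, so every vector of the shape $e-\text{(subset sum)}$ has norm $\ge N(e_i)$ for all $i$. Throughout I would scale so that $\max_i N(e_i)=1$ (this is licit since everything is homogeneous), so the "ambient" inequality is just $N(e-\sum_{i\in T}e_i)\ge 1$ for every $T$. The plan is: first derive (1)(a), then feed it into (1)(b), then feed that into (1)(c); part (2) is a separate one-variable induction.

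\emph{Part (1)(a).} Apply Watson's identity to $e$ itself, with all $a_i=1$, $d=2$, $A=n$. The identity reads $\sum_i\big(N(e-e_i)-N(e_i)\big)=(n-4)N(e)=(n-4)t$. Since $N(e-e_i)\ge u$ is false in the wrong direction — rather, $u=\min_i N(e-e_i)$, so $N(e-e_i)\ge u$ for all $i$ and hence $\sum_i N(e-e_i)\ge nu$; combined with $\sum_i N(e_i)\le n$ (our normalization) this gives $nu - n \le \sum_i\big(N(e-e_i)-N(e_i)\big) = (n-4)t$, i.e. $u\le\frac{n+(n-4)t}{n}$. That is exactly (a).

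\emph{Part (1)(b).} Now fix the minimizing $i$ and apply Watson's identity to the vector $e-e_i$: this is $\frac{-e_i+\sum_{k\ne i}e_k}{2}$, so in absolute value all $n$ coefficients are still $1$, $A=n$ again, and the identity (this is the relation $(\ast)$ from the proof of Lemma~\ref{lemindex2}) reads $\big(N(e)-N(e_i)\big)+\sum_{j\ne i}\big(N(e-e_i-e_j)-N(e_j)\big)=(n-4)N(e-e_i)=(n-4)u$. Bounding $N(e-e_i-e_j)\ge v$ for the $n-1$ values $j\ne i$, using $N(e_j)\le 1$, and $N(e_i)\ge$ (lower bound) — actually here I want $N(e)-N(e_i)\ge t - 1$, using $N(e_i)\le 1$ — I get $(t-1)+\big((n-1)v-(n-1)\big)\le (n-4)u$, hence $v\le\frac{n+(n-4)u-t}{n-1}$, the first inequality in (b). The second inequality in (b) is then pure algebra: substitute the bound (a) for $u$ into $\frac{n+(n-4)u-t}{n-1}$ and simplify; the coefficient of $t$ becomes $\frac{(n-4)^2-n}{n(n-1)}$ and the constant term $\frac{2n(n-2)}{n(n-1)}$, matching the stated form.

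\emph{Part (1)(c).} Same idea, one level deeper: apply Watson's identity to $e-e_i-e_j$ (coefficients still all $\pm1$, $A=n$). The identity gives a telescoping relation involving $N(e-e_i)$, $N(e-e_j)$, the $n-2$ quantities $N(e-e_i-e_j-e_k)$ for $k\ne i,j$, and $N(e-e_i-e_j)=v$. Using $N(e-e_i-e_j-e_k)\ge w$, $N(e-e_i)=u$ (wait — it is $\ge u$, but on the left it appears with a sign that makes the bound go the right way; one must be slightly careful, and $N(e-e_j)\ge u$ as well since $j\ne i$ so $N(e-e_j)\ge\min_\ell N(e-e_\ell)=u$), and the normalization, one extracts $w\le\frac{n+(n-4)v-2u}{n-2}$. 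This is the step where I expect the sign bookkeeping to be the main obstacle: one has to track which of $N(e-e_i)$, $N(e-e_j)$ enter the Watson identity for $e-e_i-e_j$ with which sign, and make sure the inequality $N(e-e_\ell)\ge u$ is used in the direction that weakens, not strengthens, the bound. Writing out the identity for $e' = e - e_i - e_j$ explicitly (each term $|a_\ell|(N(e'-\sgn(a_\ell)e_\ell)-N(e_\ell))$, noting $\sgn(a_i)=\sgn(a_j)=-1$ and $\sgn(a_\ell)=+1$ otherwise) resolves it, but it is the one genuinely fiddly computation.

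\emph{Part (2).} This is independent of the above. Fix the minimizing $i$; I would prove $N(e-\ell e_i)\le \ell u-(\ell-1)t+\ell(\ell-1)$ by induction on $\ell$, the base case $\ell=1$ being $N(e-e_i)=u$. For the inductive step, apply Watson's identity — or more simply the elementary parallelogram-type identity $N(x-e_i)+N(x+e_i) = 2N(x)+2N(e_i)$ — to $x=e-\ell e_i$: this gives $N(e-(\ell+1)e_i) = 2N(e-\ell e_i)-N(e-(\ell-1)e_i)+2N(e_i)$. Plug in the inductive bound for $N(e-\ell e_i)$ and the exact/lower value for $N(e-(\ell-1)e_i)$ — here one needs $N(e-(\ell-1)e_i)\ge$ the previous-step expression is the wrong direction, so instead I would use that for $\ell-1\ge 1$ the \emph{induction hypothesis itself is an equality chain only at the top}; the clean route is to note the recursion $N(e-\ell e_i)$ is a quadratic in $\ell$ with leading coefficient $N(e_i)\le 1$, determined by its values at $\ell=0$ (namely $t$) and $\ell=1$ (namely $u$), giving $N(e-\ell e_i)=\ell u-(\ell-1)t+\ell(\ell-1)N(e_i)\le \ell u-(\ell-1)t+\ell(\ell-1)$ since $\ell(\ell-1)\ge0$ and $N(e_i)\le1$. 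So part (2) is immediate from the quadratic structure of $\ell\mapsto N(e-\ell e_i)$ together with the normalization $N(e_i)\le 1$.

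\emph{Summary of the difficulty.} None of the steps is deep: each is one invocation of Watson's identity plus the successive-minima inequalities $N(e-\text{(subset)})\ge\max_i N(e_i)$ and the normalization $N(e_i)\le 1$. The only place demanding care is (1)(c), where the Watson identity for $e-e_i-e_j$ must be written out term-by-term so that the two "boundary" terms $N(e-e_i)$ and $N(e-e_j)$ are bounded below by $u$ in the correct direction; once that is set up the algebra for the composite bounds in (b) and (c) is routine.
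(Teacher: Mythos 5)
Your proof is correct and follows essentially the same route as the paper: Watson's identity applied successively to $e$, $e-e_i$ and $e-e_i-e_j$, combined with the minimality of $u,v,w$, the normalization $N(e_i)\le N(e_n)=1$, and the sign bookkeeping you flag in (c), which indeed works out since $N(e-e_i)=u$ and $N(e-e_j)\ge u$ enter with a negative sign in the bound for $w$. Your ``quadratic in $\ell$'' argument for part (2) is exactly the paper's identity $N(e-\ell e_i)=\ell N(e-e_i)-(\ell-1)N(e)+(\ell^2-\ell)N(e_i)$, so nothing further is needed.
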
 

\begin{proof} 
The three assertions in (1) result from the Watson identity 
applied to $e$, $e-e_i$ and $e-e_i-e_j$, respectively, 
and (2) from the identity 
$N(e-\ell e_i)=\ell(e-e_i)-(\ell-1)N(e)+(\ell^2-\ell)N(e_i)$. 
\end{proof}

\begin{prop} \label{propnorm1} 
Assume that $\cB=(e_1,\dots,e_n)$ ($n\ge 4$) is a frame 
of successive minima for~$\Lb$. Then we have 
$$N(e)\le 
\frac{n(n+1)}8\,N(e_n)\,,$$ 
and for $n=4$, $5$ and $6$, we have the better bounds 

\smallskip\ctl{\small 
$N(e)\le N(e_4)$\,, $N(e)\le\frac 52\,N(e_5)$\,, 
and $N(e)\le 4\,N(e_6)$\,, 
} 

\smallskip\noi 
respectively. 
These bounds are optimal and attained uniquely on well-rounded 
lattices. Moreover, if $n\ne 6$, they are attained on a unique 
similarity class of lattices. 
\end{prop}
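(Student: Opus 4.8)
My plan is to deduce every inequality of the statement from one family of identities, and then to handle the equality cases by elementary linear algebra. Throughout, scale $\Lb$ so that $N(e_n)=1$; since $e_1,\dots ,e_n$ are successive minima, $N(e_i)\le 1$ for all $i$, and I abbreviate $S=\sum_{i=1}^{n}N(e_i)\le n$. The bound $N(e)\le\frac{n(n+1)}8$ is immediate from Corollary~\ref{corcrudebound} with $d=2$. For the refined bounds I use repeatedly that every vector $e-\sum_{i\in T}e_i$ (for $T\subseteq\{1,\dots ,n\}$) has all of its coordinates on $\cB$ equal to $\pm\frac12$, so none of them vanishes; hence, if $j$ is the least index with $N(e_j)=N(e_n)$, then $e-\sum_{i\in T}e_i\notin\mathrm{span}(e_1,\dots ,e_{j-1})$, and comparing the independent family $e_1,\dots ,e_{j-1},\,e-\sum_{i\in T}e_i$ with the $j$-th successive minimum (whose earlier values are $<N(e_n)$) gives $N\bigl(e-\sum_{i\in T}e_i\bigr)\ge N(e_n)=1$ — the fact recorded just before Notation~\ref{nottuvw}.

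The key point is the identity, valid for $0<m<n$,
\[
\sum_{|T|=m}N\Bigl(e-\sum_{i\in T}e_i\Bigr)=\frac14\Bigl(\binom nm\,S+2\,c_{n,m}\sum_{i<j}e_i\cdot e_j\Bigr),\qquad c_{n,m}:=\binom{n-2}{m-2}-2\binom{n-2}{m-1}+\binom{n-2}{m}.
\]
To prove it, write $e-\sum_{i\in T}e_i=\frac12\sum_k\varepsilon^T_k e_k$ with $\varepsilon^T_k=\pm1$ and sum $N(\cdot)$ over all $T$ of size $m$: for each fixed pair $k\ne l$ one has $\sum_{|T|=m}\varepsilon^T_k\varepsilon^T_l=c_{n,m}$ (counting $m$-subsets by how many of $k,l$ they contain), while $\sum_{|T|=m}(\varepsilon^T_k)^2=\binom nm$. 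Substituting $\sum_{i<j}e_i\cdot e_j=2N(e)-\frac12 S$ (obtained by rearranging $N(\sum_i e_i)=4N(e)$) turns this into
\[
\sum_{|T|=m}N\Bigl(e-\sum_{i\in T}e_i\Bigr)=\frac{\binom nm-c_{n,m}}{4}\,S+c_{n,m}\,N(e).
\]
Since the left side is a sum of $\binom nm$ terms each $\ge 1$, and $c_{n,m}<0$ for the $m$ used below, one may solve for $N(e)$; the resulting bound increases with $S$, so inserting $S\le n$ gives an explicit upper bound.

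For $n=4,5$ take $m=2$ ($c_{4,2}=c_{5,2}=-2$): the identity becomes $\sum_{i<j}N(e-e_i-e_j)=2\bigl(S-N(e)\bigr)$ for $n=4$ and $=3S-2N(e)$ for $n=5$, whence $N(e)\le 1=N(e_4)$, resp.\ $N(e)\le\frac52=\frac52 N(e_5)$. For $n=6$ the choice $m=2$ gives only $N(e)\le 9$, so take $m=3$: then $c_{6,3}=-4$, the identity reads $\sum_{|T|=3}N\bigl(e-\sum_{i\in T}e_i\bigr)=6S-4N(e)$, a sum of $\binom63=20$ terms each $\ge 1$, so $4N(e)\le 6S-20\le 16$, i.e.\ $N(e)\le 4=4N(e_6)$. (Each refined bound is strictly below $\frac{n(n+1)}8$; for $n\ge 7$ no value of $m$ improves on the crude bound, which is why the refinement stops at $n=6$.)

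It remains to treat equality. Since both $\sum_{|T|=m}N(\cdot)\ge\binom nm$ and $S\le n$ are used and the bound is strictly increasing in $S$, equality in the refined bound forces every one of the $\binom nm$ summands to equal $1$ and $S=n$, i.e.\ $N(e_i)=N(e_n)$ for all $i$; in particular $\Lb$ is well-rounded, and I normalise $N(e_i)=1$. The conditions $N\bigl(e-\sum_{i\in T}e_i\bigr)=1$ (for $|T|=m$) then form a linear system in the Gram entries $e_i\cdot e_j$ which, together with positive definiteness, pins down the Gram matrix. For $n=4$ this recovers (compare Remark~\ref{remdim4}) that $\Lb$ is the centred cubic lattice $C_4\sim\D_4$; for $n=5$ the unique solution is the equiangular Gram matrix with unit diagonal and off-diagonal $\frac14$, giving the perfect Coxeter lattice $\A_5^3$; for $n=6$ the analogous solution (unit diagonal, off-diagonal $\frac13$) realises $\D_6^+$ — one checks that $\D_6^+$ contains an index-$2$ sublattice spanned by six of its minimal vectors whose Gram matrix is of this shape, and that this frame achieves $N(e)=4N(e_6)$. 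For $n=4,5$ the linear systems have a single solution, giving uniqueness of the similarity class; for $n=6$ the system is overdetermined, and I have so far only checked that its cyclically symmetric solutions vanish, so deciding whether further equality classes occur is the routine-but-delicate point I expect to be the main obstacle, and presumably the reason uniqueness is claimed only for $n\ne 6$.
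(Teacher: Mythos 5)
Your inequality half is correct, and it is essentially an averaged repackaging of the paper's own computation: your identity for $\sum_{|T|=m}N\bigl(e-\sum_{i\in T}e_i\bigr)$ is what one gets by summing Watson identities, and for $n=6$, $m=3$ it is literally the relation $12N(e)+\tfrac12\sum N(e-e_i-e_j-e_k)=18T$ used in the paper (for $n=4,5$ the paper works instead with the minimizing vectors of Lemma~\ref{lemtuvw}, but the content is the same). The bounds $N(e)\le 1,\ \tfrac52,\ 4$ then follow correctly from $N\bigl(e-\sum_{i\in T}e_i\bigr)\ge N(e_n)$ and $S\le n$, and the crude bound from Corollary~\ref{corcrudebound}. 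The equality and optimality discussion, however, has genuine gaps.

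First, for $n=4$ your claim that the conditions $N(e_i)=1$ and $N(e-e_i-e_j)=1$ for all $i<j$, together with positive definiteness, pin down the Gram matrix is false. Take $e_1\cdot e_2=-e_3\cdot e_4=a$, $e_1\cdot e_3=-e_2\cdot e_4=b$, $e_1\cdot e_4=-e_2\cdot e_3=c$ with $a,b,c$ small: then $\sum_{i<j}e_i\cdot e_j=0$ and every $N(e-e_i-e_j)=1$ (for such a vector the six sign products are constant on the three complementary couples of index pairs, and the Gram entries sum to zero on each couple), while the matrix stays positive definite; so your linear system has a three-parameter solution space. What eliminates it is the further information $N(e-e_i)\ge N(e_n)$ for all $i$, available because $\cB$ is a frame of successive minima for $\Lb$ (here $N(e-e_1)=1-(a+b+c)$, etc., and the four inequalities force $a=b=c=0$); this is exactly what the paper's route through Lemma~\ref{lemindex2} and Remark~\ref{remdim4} exploits, and your equality argument never invokes these vectors. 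Second, the claims ``optimal, attained only on well-rounded lattices, unique similarity class'' also cover the bound $\frac{n(n+1)}8$ for $n\ge 7$, which you do not treat at all: equality there forces all $N(e_i)=N(e_n)$ and $e_i\cdot e_j=\tfrac12 N(e_n)$, hence $\Lb'\sim\A_n$ and $\Lb\sim\A_n^2$, with attainment resting on the Coxeter--Barnes results cited in the paper. Third, attainment for $n=5,6$ requires checking that your candidate Gram matrices (off-diagonal $\tfrac14$, resp.\ $\tfrac13$) define lattices $\Lb=\la\Lb',e\ra$ whose minimum really equals $N(e_i)$, so that $\cB$ is a frame of successive minima; you assert the identifications with $\A_5^3$ and $\D_6^+$ without this verification, whereas the paper obtains the $n=5$ case from perfection plus the classification of $5$-dimensional perfect lattices. (By contrast, the $n=6$ uniqueness issue you single out as the main obstacle is not part of the statement, which claims a unique similarity class only for $n\ne 6$.)
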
 

\begin{proof} 
Without loss of generality, 
we may assume that $\Lb$ has been rescaled so that $N(e_n)=1$, 
which implies that $T:=N(e_1)+\dots+N(e_n)$ is bounded 
from above by~$n$. We first prove the upper bounds. 

\smallskip 

The first inequality is merely the crude bound 
of Proposition~\ref{propcrudebound}. 

\smallskip 

For $n\le 6$, the coefficient of $t$ in the second inequality 
in (1b) of Lemma~\ref{lemtuvw} is negative, so that the inequality 
$v\ge 1$ implies $t\le 1$ if $n=4$ and $t\le\frac 52$ if $n=5$. 

\smallskip 

The inequalities of Lemma~\ref{lemtuvw} do not suffice to prove 
the proposition if $n=6$. To deal with this case we use directly 
the Watson identities relative to $e$, to the $e-e_i$, 
and to $e-e_i-e_j$, namely 

{\small\begin{gather*} 
(a)\quad\sum_{i=1}^6\,N(e-e_i)=T+2N(e)\,,\\ 
\text{then } \forall\,i\,,\\ 
(b)\quad N(e)+\sum_{j\ne i}\,N(e-e_i-e_j)= 
T+2N(e-e_i)\,,\\ 
\nd\forall\,i,j\,,\\ 
(c)\ N(e-e_i)\!+\!N(e-e_j)+\!\sum_{k\ne i,j}N(e-e_i-e_j-e_k) 
=\!T+2N(e-e_i-e_j)\,. 
\end{gather*}}%
Summing on $i$ in (b) and evaluating $\sum N(e-e_i)$ by (a), 
we obtain 
{\small 
$$(b')\quad 2N(e)+\sum_{i,j;j\ne i}N(e-e_i-e_j)=8T\,,$$} 
and summing on $i,j$ in $(c)$ and dividing out both sides by~$2$, 
we get 
{\small 
$$(c')\quad 5\sum_\ell N(e-e_\ell)+\frac12 \sum_{i,j,k\text{\,distinct}} 
\!\!\!N(e-e_i-e_j-e_k)=15T+\sum_{j\ne i}\,N(e-e_i-e_j)\,.$$} 
Evaluating $\sum_\ell N(e-e_\ell)$, adding $(b')\nd(c')$ 
yields 
$$12N(e)+\frac 12\sum_{i,j,k\text{\,distinct}} 
N(e-e_i-e_j-e_k)=18T\le 108\,,$$ 
hence $N(e)\le\frac12\,(108-60)=\frac{48}{12}=4$. 

\medskip 

In all cases (including in Proposition~\ref{propcrudebound}), 
if equality holds we necessarily have $T=n$, 
which is equivalent to $\forall\,i,\,N(e_i)=1$ and shows 
that $\Lb$ must be well-rounded. 

\smallskip 

If $n=4$ $\Lb$ is a centred cubic lattice by Lemma~\ref{lemindex2}. 

\smallskip 

If $n=5$, the proof above shows that all vectors 
$e_i$, 
$e-e_i-e_j$, $j\ne i$ have the same norm. A simple calculation 
will show that these condition determines uniquely the Gram matrix 
of the $e_i$ once their norm is given, i.e., that $\Lb$ is perfect. 
By the classification of $5$-dimensional perfect lattices 
(see \cite{M}, Section~6.4), since $\Lb$ is not a root lattice 
($\Lb$ is not integral when scaled to minimum~$2$), 
$\Lb$ is similar to $\A_5^3$, 
and we easily check that $\frac{N(e)}{N(e_i)}=\frac 52$. 

\smallskip 

The situation is somewhat similar if $n\ge 7$. The bound for $N(e)$ 
given in the proposition is attained only if $e_i\cdot e_j=\frac 12$ 
for all $i$ and $j\ne i$. Scaling the $e_i$ to norm~$2$ 
we recognize the Korkine and Zolotareff Gram matrix for $\A_n$ 
(with entries $a_{i,i}=2$ and $a_{i,j}=1$ off the diagonal). 
This shows that $\Lb'$ is then similar to $\A_n$, 
and we then have $\min\Lb=\min\Lb'$ 
(by results of Coxeter and Barnes; see \cite{M}, Section~{5.1}). 
Again $\Lb$ is perfect, and we easily check 
that the value of $\frac{N(e)}{N(e_i)}$ is the convenient one. 

\smallskip 

Finally if $n=6$ we content ourselves with an example. 
Taking $N(e_i)=3$ and $e_i\cdot e_j=1$ if $j\ne i$, 
then we see that $\Lb'$ is an integral lattice of minimum~$3$ 
for which $N(e)=12$. 
\newline{\small 
[By a joint theorem with Boris Venkov, the condition $s\ge 16$ 
cha\-racterizes $\Lb$ among integral lattices {\em of minimum~$3$} 
as a scaled copy of~$\D_6^+$.]} 
\end{proof}

\section{Dimensions up to seven} \label{secdim<=7} 

In this section we first give a short proof 
of Theorem~\ref{thn<=8} for dimensions~$n\le 6$, 
then prove some bounds for lattices of index $4$, 
and finally prove Theorem~\ref{thn<=8} for dimension~$7$.

\subsection{Dimensions up to 6} 
In this subsection we prove Theorem~\ref{thn<=8} for $n\le 6$. 
Recall (Watson; see \cite{M1}, Theorem~1.7) 
that we have \hbox{$\imath(\Lb)\le\g_n^{n/2}$}. 

If $\imath(\Lb)=1$, there is nothing to prove. 
Now one has $\g_n^{n/2}\le 2$ if $n\le 4$, and the value $2$ 
is attained by $\g(\Lb)$ only if $n=4$ and $\Lb$ is the centred 
cubic lattice (similar to the root lattice $\D_4$), 
which has a basis of minimal vectors. This shows that 
we have $H_b=M$ for all $n\le 4$. 

Next if $\imath(\Lb)=2$ (which needs $n\ge 4$), 
Theorem~\ref{thn<=8} results from Corollary~\ref{corindex2}. 
This applies to dimension $5$ since $\g_5^{5/2}=\sqrt 8<3$. 

For $n=6$, we have $\g_6^3=4.618\dots$, 
so that we need also consider indices $3$ and $4$. 
If $\imath=3$ we have $Q_b=1$ by Proposition~\ref{propWatsoncond}, 
and if $\imath=4$, we know by Remark~\ref{remdenom4} 
that $\Lb/\Lb'$ is $2$-elementary. Thus we may apply 
Proposition~\ref{prop2elem}: there is a unique $[6,2,4]$-code, 
it has weight distribution $(4^3)$, so that we again 
have $Q_b=1$. 
(The lifts of this code are similar to the root lattice $\D_6$; 
see see \cite{M1}, Table~11.1). 

This completes the proof of Theorem~\ref{thn<=8} 
in dimensions $n\le 6$.

\subsection{A bound for index 3} 
We consider a lattice $\Lb$, a frame $e_1,\dots,e_n$ of successive 
minima for $\Lb$ and the sublattice $\Lb'$ of $\Lb$ it generates. 
We shall prove the strict inequality $Q_b(\Lb)<\frac n4$ 
if $[\Lb:\Lb']=3$ and $7\le n\le 10$. 
However we consider for further use the slightly more general 
situation, of index $d\ge 3$, for which 

\smallskip\ctl{$\Lb=\la\Lb',e\ra$ where $e=\dfrac{e_1+\dots+e_n}d$} 

\begin{lemma} \label{lemindex3a} 
Recall that $T=\sum_{i=1}^n\,N(e_i)$. 
Then we have the identity 
$$\sum_{1\le i<j\le n}\,N(e-e_i-e_j)= 
(n-2)T+\frac{n^2-(4d+1)n+2d(d+2)}2\,N(e)\,.$$ 
\end{lemma}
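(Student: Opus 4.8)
The plan is to prove the identity by brute-force expansion of each summand $N(e-e_i-e_j)$, using only bilinearity of the scalar product and the single structural relation $e_1+\dots+e_n=d\,e$. First I would write, for every pair $i\ne j$,
$$N(e-e_i-e_j)=N(e)+N(e_i)+N(e_j)+2\,e_i\cdot e_j-2\,e\cdot e_i-2\,e\cdot e_j\,,$$
and sum over the $\binom n2$ pairs $1\le i<j\le n$. The constant term contributes $\binom n2\,N(e)$; since each index occurs in $n-1$ of the pairs, the terms $N(e_i)+N(e_j)$ contribute $(n-1)T$, and $-2\,e\cdot e_i-2\,e\cdot e_j$ contributes $-2(n-1)\sum_i e\cdot e_i$.

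The two remaining sums are handled via $\sum_i e_i=d\,e$: one has $\sum_i e\cdot e_i=e\cdot(d\,e)=d\,N(e)$, so the previous contribution equals $-2d(n-1)N(e)$; and $2\sum_{i<j}e_i\cdot e_j=N\!\big(\sum_i e_i\big)-\sum_i N(e_i)=d^2N(e)-T$. Adding the five contributions gives
$$\sum_{1\le i<j\le n}N(e-e_i-e_j)=\Big(\binom n2-2d(n-1)+d^2\Big)N(e)+\big((n-1)-1\big)T\,,$$
and it then suffices to observe that $(n-1)-1=n-2$ and, doubling, $n(n-1)-4d(n-1)+2d^2=n^2-(4d+1)n+2d(d+2)$, so that $\binom n2-2d(n-1)+d^2=\tfrac12\big(n^2-(4d+1)n+2d(d+2)\big)$.

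The computation is entirely routine; there is no genuine obstacle, only the bookkeeping of the combinatorial multiplicities ($\binom n2$, and the factor $n-1$ for how often an index appears in a pair). If one prefers to stay closer in spirit to the rest of the section, the identity can instead be derived by applying Watson's identity (Proposition~\ref{propWatsonindentity}) to each of the $n$ vectors $e-e_k$ and summing over $k$: here $e-e_k$ has numerator with coefficients $a_i=1$ for $i\ne k$ and $a_k=1-d$, so $\sgn(a_k)=-1$ and the $k$-th term of Watson's identity becomes $(d-1)\big(N(e)-N(e_k)\big)$; combining with the identity $\sum_iN(e-e_i)=T+(n-2d)N(e)$ (Watson's identity for $e$ itself, all $a_i=1$) and summing over $k$ yields the same formula, the factors $d-1$ and $d-2$ being exactly what keeps track of the negative coefficient of $e_k$. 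This mirrors the passage from $(b)$ to $(b')$ in the proof of Proposition~\ref{propnorm1}, which is precisely the case $d=2$ of the present lemma.
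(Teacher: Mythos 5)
Your main argument is correct: the expansion of each $N(e-e_i-e_j)$ by bilinearity, the multiplicity count ($\binom n2$ pairs, each index in $n-1$ of them), and the two reductions $\sum_i e\cdot e_i=d\,N(e)$ and $2\sum_{i<j}e_i\cdot e_j=d^2N(e)-T$ coming from $\sum_i e_i=d\,e$ all check out, and the resulting coefficient $\binom n2-2d(n-1)+d^2$ does equal $\tfrac12\bigl(n^2-(4d+1)n+2d(d+2)\bigr)$. This is, however, a genuinely different route from the paper's: the paper never expands scalar products, but instead applies Watson's identity once to $e$ (giving $\sum_i N(e-e_i)=T+(n-2d)N(e)$) and once to each $e-e_k$ (whose $k$-th term is $(d-1)\bigl(N(e)-N(e_k)\bigr)$ and whose right-hand side is $(n-d-2)N(e-e_k)$), then sums over $k$ and substitutes -- exactly the alternative you sketch in your last paragraph, which is indeed the $d$-analogue of the passage from $(b)$ to $(b')$ in the proof of Proposition~\ref{propnorm1}. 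Your direct expansion is more elementary and self-contained (it uses nothing beyond $\sum_i e_i=d\,e$), while the paper's derivation stays inside the iterative Watson machinery of the section, which is what one continues to use to reach vectors $e-e_i-e_j-e_k$ and beyond (cf.\ Lemma~\ref{lemtuvw}); either is acceptable. One small slip in your sketch of the second route: the two relevant coefficients there are $d-1$ and $n-d-2$ (from $A-2d$ with $A=n+d-2$), not ``$d-1$ and $d-2$''; this does not affect your main proof.
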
 

\begin{proof} 
Consider Watson's identities relative to $e$ and to the $e-e_i$: 
{\small\begin{gather*} 
\sum_{i=1}^n\,\big(N(e-e_i)-N(e_i)\big)=(n-2d)N(e)\\ 
\nd\forall\,i\,,\\ 
(d\!-\!1)\big(N(e)\!-\!N(e_i)\big)+ 
\sum_{j\ne i}\big(N(e\!-\!e_i\!-\!e_j)\!\!-\!\!N(e_j)\big)= 
(n\!-\!d\!-\!2)N(e\!-\!e_i)\,. 
\end{gather*}}%
Summing on $i$ in the second identity and evaluating 
the right hand side using the first identity, we obtain 
{\small 
\begin{gather*} 
(d-1)n N(e)-(d-1)T+\sum_{i,j;j\ne i}\,N(e-e_i-e_j)-(n-1)T\\= 
(n-d-2)\sum_i\,N(e-e_i)\\= 
(n-d-2)(n-2d)N(e)+(n-d-2)T\,, 
\end{gather*}}%
from which the required identity follows after dividing out by~$2$ 
the coefficients of $T$ and of~$N(e)$. 
\end{proof} 

\begin{lemma} \label{lemindex3b} 
There exists among the vectors $e-e_i-e_j$, $1\le i<j<n$, 
a vector $x$ such that 
$$N(x)\le\,\frac{2(n^2-3n+1)+\big(n^2-(4d+1)n+2d(d+2)\big)\,N(e)}%
{(n-1)(n-2)}\,.$$ 
\end{lemma}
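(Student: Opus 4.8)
The plan is to deduce Lemma~\ref{lemindex3b} from Lemma~\ref{lemindex3a} by a simple averaging argument. There are exactly $\binom{n-1}{2}=\frac{(n-1)(n-2)}2$ pairs $(i,j)$ with $1\le i<j<n$, and the identity of Lemma~\ref{lemindex3a} gives the value of the sum of $N(e-e_i-e_j)$ over \emph{all} $\binom n2$ pairs with $1\le i<j\le n$. So first I would isolate the pairs that include the index $n$: writing $\Sigma_n:=\sum_{i<n}N(e-e_i-e_n)$, the full sum in Lemma~\ref{lemindex3a} equals $\bigl(\sum_{1\le i<j<n}N(e-e_i-e_j)\bigr)+\Sigma_n$. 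The minimum over the smaller family $\{e-e_i-e_j:1\le i<j<n\}$ is at most the average, i.e.\ at most
\[
\frac{1}{\binom{n-1}{2}}\left((n-2)T+\frac{n^2-(4d+1)n+2d(d+2)}2\,N(e)-\Sigma_n\right).
\]
Then I would bound $T$ and $\Sigma_n$ from above. Since the $e_i$ are successive minima rescaled (or not) so that $N(e_n)=\max_iN(e_i)$, and since the lemma is stated with a bare $N(e)$ rather than relative to $N(e_n)$, I expect the intended normalization is the one used throughout Section~\ref{secbackground}, namely $N(e_n)=1$, giving $T\le n$; each term $N(e-e_i-e_n)$ is at least $N(e)$ because $(e_1,\dots,e_n)$ is a frame of successive minima and $e-e_i-e_n$ is a nonzero vector of $\Lb\setminus\Lb'$, so $\Sigma_n\ge (n-1)N(e)$ gives an upper bound when subtracted. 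Plugging $T\le n$ and $-\Sigma_n\le -(n-1)N(e)$ into the displayed average and simplifying the coefficient of $N(e)$ — one computes $\frac{n^2-(4d+1)n+2d(d+2)}2-(n-1)=\frac{n^2-(4d+3)n+2d(d+2)+2}2$, hmm, that does not obviously match — suggests that instead one should simply take $T\le n$ and keep \emph{all} $\binom n2$ terms, i.e.\ not split off $\Sigma_n$ at all, and bound $\min_{i<j<n}N(e-e_i-e_j)\le\frac1{\binom{n-1}2}\sum_{1\le i<j\le n}N(e-e_i-e_j)$ using that the $n-1$ omitted terms are each $\ge N(e-e_i-e_j)$-type quantities bounded below by... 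Actually the cleanest route: among all $\binom n2$ vectors $e-e_i-e_j$ there are $\binom{n-1}2$ with $j<n$, and each of the remaining $n-1$ has norm $\ge$ the minimum over the whole family; but more simply, since $\binom{n-1}{2}\le\binom n2$ and we want the min over the sub-family, the min over the sub-family is $\le\frac1{\binom{n-1}2}\times(\text{sum over the sub-family})\le\frac1{\binom{n-1}2}\times(\text{sum over all pairs})$, and now insert Lemma~\ref{lemindex3a} with $T\le n$. This yields exactly
\[
N(x)\le\frac{2n+\bigl(n^2-(4d+1)n+2d(d+2)\bigr)N(e)}{(n-1)(n-2)},
\]
and I would then have to reconcile the numerator $2n$ with the stated $2(n^2-3n+1)$ — the discrepancy indicates the statement is \emph{not} using the normalization $N(e_n)=1$ but rather is homogeneous, with each $N(e_i)$ and each ``$1$'' in the Watson identities being shorthand for $N(e_n)$; re-reading, the constant $n$ in Lemma~\ref{lemtuvw}(1) and Proposition~\ref{propcrudebound} is likewise the rescaled bound $T\le n\,N(e_n)$, so $2(n^2-3n+1)=2\binom{n-1}2+2(n-1)\cdot 1$ type bookkeeping is absorbing the $n-1$ subtracted terms' lower bounds rather than discarding them.

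Given that ambiguity, the honest plan is: (i) rewrite Lemma~\ref{lemindex3a} isolating the $n-1$ pairs containing $n$; (ii) for each such pair use $N(e-e_i-e_n)\ge N(e_n)$, which holds because $e-e_i-e_n\in\Lb\setminus\Lb'$ and $N(e_n)$ is the largest successive minimum, hence $\Sigma_n\ge(n-1)N(e_n)$; (iii) use $T\le n\,N(e_n)$; (iv) conclude that the average of the remaining $\binom{n-1}2$ values satisfies
\[
\min_{1\le i<j<n}N(e-e_i-e_j)\le\frac{(n-2)\cdot n\,N(e_n)-(n-1)N(e_n)+\frac{n^2-(4d+1)n+2d(d+2)}2N(e)}{\binom{n-1}2};
\]
then $(n-2)n-(n-1)=n^2-3n+1$, and multiplying top and bottom by $2$ gives precisely the asserted bound (with the suppressed factor $N(e_n)$ on the ``$2(n^2-3n+1)$'' term, consistent with the paper's convention of scaling to $N(e_n)=1$ from Section~\ref{secdim<=7} onward).

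The main obstacle, then, is not any real calculation — it is pinning down the normalization and, more substantively, \emph{justifying the lower bound} $N(e-e_i-e_n)\ge N(e_n)$ and $N(e_i)\le N(e_n)$: these rest on $(e_1,\dots,e_n)$ being a frame of successive minima for $\Lb$, so that $N(e_1)\le\cdots\le N(e_n)$ and any vector of $\Lb$ outside $\Lb'$ (in particular any $e-e_i-e_j$, which is not in $\Lb'$ since $e\notin\Lb'$ and $d\ge 3>2$ forces $e\pm e_i\pm e_j\notin\Lb'$) has norm at least $\min\Lb=N(e_1)$ — and in fact at least $N(e_n)$ is the point that needs the frame property in the stronger form used for part (3) of Proposition~\ref{propWatsoncond} and in Lemma~\ref{lemtuvw}. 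Everything else is the arithmetic of binomial coefficients already exhibited above. I would write the proof in three lines: cite Lemma~\ref{lemindex3a}, drop the $n-1$ largest-index terms using $N(e-e_i-e_n)\ge N(e_n)$ and $N(e_i)\le N(e_n)$ (equivalently $T\le n\,N(e_n)$), and divide by $\binom{n-1}2$.
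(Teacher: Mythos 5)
Your final three-step plan is exactly the paper's proof: apply Lemma~\ref{lemindex3a}, discard the $n-1$ terms $N(e-e_i-e_n)$ using $N(f)\ge N(e_n)$ for $f\in\Lb\sm\Lb'$ together with $T\le n\,N(e_n)$, and divide by the $\frac{(n-1)(n-2)}2$ remaining terms, which gives the stated numerator via $n(n-2)-(n-1)=n^2-3n+1$. Your resolution of the normalization question is also the right one: the statement is under the running convention $N(e_n)=1$ announced at the end of Section~\ref{secbackground}.
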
 

\begin{proof} 
Since $(e_i)$ is a frame of successive minima for $\Lb$, 
we have $N(e_i)\le N(e_n)=1$ for all~$i$, hence $T\le n$, 
and $N(f)\ge N(e_n)$ for all $f\in\Lb\sm\Lb'$. 
In the identity of Lemma~\ref{lemindex3a} the left hand side is a sum 
of $\frac{n(n-1)}2$ terms from which we discard the $(n-1)$ 
terms $e-e_i-e_n$, obtaining 
{\small 
$$\sum_{1\le i<j<n}\,N(e-e_i-e_j)\le n(n-2)-(n-1)+ 
\big(n^2-(4d+1)n+2d(d+2)\big)\,N(e)\,.$$}%
Dividing out the right hand side by $\frac{n(n-1)}2$ 
yields the inequality we want to prove for the smallest norm 
of a vector $e-e_i-e_j$, $i<j<n$. 
\end{proof} 

\begin{lemma} \label{lemindex3c} 
With the hypothesis of Lemma~\ref{lemindex3b}, 
assume moreover that we have $n\le 3d+1$. 
Then there exists among the vectors $e$ and 
$e-e_i-e_j$, $1\le i<j<n$, 
a vector $y$ such that 
$$N(y)\le\frac{n^2-3n+1}{(2d-1)n-(d^2+2d-1)}\,.$$ 
In particular if $d=3$ and $n\le 10$, or $d=4$, $m_2=0$ and $n\le 13$, 
then $\frac{H_b(\Lb)}{M(\Lb)}$ is strictly smaller than $\frac n4$. 
\end{lemma}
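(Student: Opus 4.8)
The plan is to combine the two inequalities already in hand: Lemma~\ref{lemindex3b}, which produces a vector $x\in\{e-e_i-e_j:i<j<n\}$ with
$$N(x)\le\frac{2(n^2-3n+1)+\big(n^2-(4d+1)n+2d(d+2)\big)N(e)}{(n-1)(n-2)}\,,$$
and the crude bound of Corollary~\ref{corcrudebound}, which gives $N(e)\le\frac{n(n+1)}{2d^2}$ (recall the lattice is scaled so that $N(e_n)=1$, hence $T\le n$). The candidate vector $y$ will be whichever of $e$ and the $e-e_i-e_j$ has smaller norm; so it suffices to exhibit \emph{one} element of $e+\Lb'$ of norm at most $\frac{n^2-3n+1}{(2d-1)n-(d^2+2d-1)}$, and then $y$ will do at least as well.

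First I would check the sign of the coefficient of $N(e)$ in the bound for $N(x)$. The quantity $n^2-(4d+1)n+2d(d+2)$ is a downward-... no, upward-opening quadratic in $n$ whose larger root is roughly $4d$; the hypothesis $n\le 3d+1$ places $n$ below that root (one verifies $3d+1$ is less than the larger root for $d\ge 3$), so this coefficient is \emph{negative}. Therefore replacing $N(e)$ by its \emph{lower} bound $N(e)\ge N(e_n)=1$ only increases the right-hand side; that is exactly what I want, since I am looking for an upper bound on $\min(N(e),N(x))$. Substituting $N(e)=1$ into the displayed bound for $N(x)$ gives
$$N(x)\le\frac{2(n^2-3n+1)+n^2-(4d+1)n+2d(d+2)}{(n-1)(n-2)}=\frac{3n^2-(4d+3)n+2d^2+4d+2}{(n-1)(n-2)}\,,$$
and a routine factorisation of the numerator as $(n-1)(n-2)+\big(2n^2-(4d)n+2d^2+4d\big)=\ldots$ should reorganise this into $1+\frac{n^2-3n+1}{(2d-1)n-(d^2+2d-1)}\cdot\frac{(\text{something})}{(n-1)(n-2)}$; more directly, one shows algebraically that the asserted bound $\frac{n^2-3n+1}{(2d-1)n-(d^2+2d-1)}$ is at least $\min(1,N(x))$ by clearing denominators and checking a polynomial inequality in $n$ and $d$ valid for $3\le d$ and $n\le 3d+1$. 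The main obstacle is precisely this bookkeeping: verifying that the two candidate upper bounds (namely $1$ for $N(e)$ and the displayed bound for $N(x)$, the latter after setting $N(e)=1$) always admit a common majorant equal to the claimed fraction, i.e.\ that the denominator $(2d-1)n-(d^2+2d-1)$ is positive and that the resulting polynomial inequality holds throughout the range.

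For the second, concrete assertion: one has $\frac{H_b(\Lb)}{M(\Lb)}=\frac{N(f_1)\cdots N(f_n)}{\det\Lb}/M(\Lb)$, and replacing the frame member $e_n$ by the vector $y$ just constructed yields a basis of $\Lb$ (since $y\in e+\Lb'$ and the numerator of $y$ has full support including index $n$ up to the index-$d$ cyclic extension, so $(e_1,\dots,e_{n-1},y)$ spans $\Lb$), whence $\frac{H_b(\Lb)}{M(\Lb)}\le\frac{N(y)}{N(e_n)}=N(y)$. It then remains to check the strict numerical inequality $N(y)<\frac n4$ in the two listed cases. For $d=3$, $n\le 10$: the bound is $\frac{n^2-3n+1}{5n-14}$, and one verifies $\frac{n^2-3n+1}{5n-14}<\frac n4$, i.e.\ $4n^2-12n+4<5n^2-14n$, i.e.\ $n^2-2n-4>0$, true for $n\ge 4$. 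For $d=4$ with $m_2=0$, $n\le 13$: the hypothesis $m_2=0$ means all $a_i\in\{0,1\}$, which combined with Watson-type normalisation forces $N(e)\le\frac{n(n+1)}{2\cdot16}$ but more importantly lets one push the same argument with the sharper input; the bound becomes $\frac{n^2-3n+1}{7n-23}$, and $\frac{n^2-3n+1}{7n-23}<\frac n4$ reduces to $4n^2-12n+4<7n^2-23n$, i.e.\ $3n^2-11n-4>0$, true for $n\ge 4$. Assembling these establishes the ``in particular'' clause; the work is entirely elementary once the sign discussion of the first paragraph is settled.
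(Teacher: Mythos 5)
There is a genuine gap in the main inequality. Your plan is to use the negativity of the coefficient $c(n,d)=n^2-(4d+1)n+2d(d+2)$ to replace $N(e)$ by its lower bound $1$ in the bound of Lemma~\ref{lemindex3b}. That does give a correct upper bound, but it is the bound $\vf_{n,d}(1)=\frac{3n^2-(4d+3)n+2d^2+4d+2}{(n-1)(n-2)}$, which is in general \emph{strictly larger} than the claimed bound $\frac{n^2-3n+1}{(2d-1)n-(d^2+2d-1)}$: e.g.\ for $d=3$, $n=7$ you get $\frac{46}{30}\approx 1.53$, whereas the lemma asserts $\frac{29}{21}\approx 1.38$. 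The two coincide only when $c(n,d)=0$ (i.e.\ $n=3d+1$, $d=3$). Your attempted repair, ``the asserted bound is at least $\min(1,N(x))$,'' bounds the wrong quantity: the vector $y$ has norm $\min\bigl(N(e),N(x)\bigr)$, and $N(e)$ can be well above $1$, so dominating $\min(1,N(x))$ proves nothing. The point you are missing is the trade-off between the two candidates: writing $t=N(e)\ge 1$ as an unknown parameter, $t\mapsto t$ is increasing while $t\mapsto\vf_{n,d}(t)$ is non-increasing on $[1,\infty)$ (this is where $n\le 3d+1$ enters), hence $\min\bigl(t,\vf_{n,d}(t)\bigr)$ is maximized at the crossing point $t=\vf_{n,d}(t)$; solving this equation gives exactly $\frac{n^2-3n+1}{(2d-1)n-(d^2+2d-1)}$. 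This is the paper's argument, and the stated value is what is quoted and used later (e.g.\ $Q_b\le\frac{29}{21}$ for $n=7$), so the weaker $\vf_{n,d}(1)$ is not an acceptable substitute even though it happens to stay below $\frac n4$ in the listed ranges.

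Two smaller points. First, your verification $3n^2-11n-4>0$ for the case $d=4$ fails at $n=4$ (it equals $0$ there), so the strict comparison with $\frac n4$ should be asserted only for the relevant range $n\ge 5$ (harmless in the applications, but as written it is false). Second, the claim that the larger root of $c(n,\cdot)$ exceeds $3d+1$ is not quite right at $d=3$, where $3d+1=10$ is exactly the root and the coefficient vanishes; the monotonicity argument still works with ``non-increasing,'' but the sign discussion should be stated accordingly. Your final paragraph on extracting a basis $(e_1,\dots,e_{n-1},y)$ and deducing the bound for $H_b/M$ is fine and matches Proposition~\ref{propindex3}.
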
 

\begin{proof} 
View $N(e)$ as a parameter $t\ge 1$, and denote by $\vf_{n,d}(t)$ 
the bound for $N(x)$ proved in Lemma~\ref{lemindex3b}. 
The coefficient $\a(n,d)$ of $t$ in the numerator of $\vf$, 
viewed as a function of $n$, attains its minimum on $\R$ 
for $n=2d+\frac12$, hence on $\Z$ for $n=2d$ and $n=2d+1$, 
equal to $-2d^2+2d<0$, and takes for $n=3d+1$ the value 
$-d^2+3d<0$. Thus $\vf_{n,d}(t)$ is a decreasing function 
of $t$ on $(1,+\infty)$ and attains its maximum at $t=1$, 
which is easily seen to be greater than~$1$. 
Since $t$ itself is an increasing function, 
$\min(t,\vf_{n,d}(t))$ is bounded from above by the value 
of $t$ for which $t=\vf_{n,d}(t)$, say, $\psi(n,d)$, 
which is the bound given in the Proposition. 
The comparison with $\frac n4$ is obvious. 
\end{proof} 

\begin{prop} \label{propindex3} 
With the notation of the lemmas above, assume that 
we have either $d=3$ and $7\le n\le 10$, or $d=4$ and $n\le 13$. 
Then $Q_b(\Lb)$ is strictly smaller than $\frac n4$. 
\end{prop}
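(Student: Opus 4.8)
The plan is to read off Proposition~\ref{propindex3} from Lemma~\ref{lemindex3c}; the only thing to add is that the short vector $y$ produced there can be taken as one member of a basis of $\Lb$ whose other $n-1$ members form a frame of successive minima, which forces $Q_b(\Lb)\le N(y)/N(e_n)$. First I would rescale $\Lb$ so that $N(e_n)=1$ (harmless, since $Q_b$ depends only on the similarity class of $\Lb$); then $N(e_i)\le1$ for all $i$, and $N(f)\ge1$ for every $f\in\Lb\sm\Lb'$ because $(e_1,\dots,e_n)$ is a frame of successive minima for $\Lb$ (the inequality already used in the proof of Lemma~\ref{lemindex3b}). In particular $N(e)\ge1$, and since $n\le 3d+1$ in both cases ($10=3\cdot3+1$ and $13=3\cdot4+1$) while the hypothesis $m_2=0$ of Lemma~\ref{lemindex3c} is automatic (the numerator of $e$ is $e_1+\dots+e_n$), that lemma applies and yields a vector
$$y\in\{e\}\cup\{e-e_i-e_j:1\le i<j<n\},\qquad N(y)\le\psi(n,d):=\frac{n^2-3n+1}{(2d-1)n-(d^2+2d-1)}.$$

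The key step is to replace $e_n$ by $y$ in the frame. In every case the subscript $n$ is absent from the support of the numerator of $y$ (for $y=e$ because the $e_n$-coefficient is $\tfrac1d$, and for $y=e-e_i-e_j$ because $i<j<n$), so the $n$-th coordinate of $y$ in the basis $(e_1,\dots,e_n)$ of $E$ is $\tfrac1d$, and the family $(e_1,\dots,e_{n-1},y)$ has determinant $\tfrac1{d^2}\det\Lb'=\det\Lb$. It also generates $\Lb$: from $e=y+e_i+e_j$ (or $e=y$) one gets $e\in\la e_1,\dots,e_{n-1},y\ra$, hence $e_n=de-e_1-\dots-e_{n-1}$ lies there too. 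A generating family of $n$ vectors being a basis, one concludes $H_b(\Lb)\le N(e_1)\cdots N(e_{n-1})N(y)/\det\Lb$, while $M(\Lb)=N(e_1)\cdots N(e_n)/\det\Lb$ because a frame of successive minima minimises the product of the norms of $n$ independent vectors of $\Lb$. Dividing, $Q_b(\Lb)\le N(y)/N(e_n)=N(y)\le\psi(n,d)$.

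It remains to see that $\psi(n,d)<\tfrac n4$, i.e.\ $4(n^2-3n+1)<n\big((2d-1)n-(d^2+2d-1)\big)$. For $d=3$ this is $n^2-2n-4>0$, which holds for $n\ge4$ and in particular for $7\le n\le10$. For $d=4$ it is $3n^2-11n-4>0$, which holds for $n\ge5$; and the remaining cases $d=4$, $n\le7$ are vacuous, since Watson's identity applied to $e$ reads $\sum_i\big(N(e-e_i)-N(e_i)\big)=(n-8)N(e)$, whose left-hand side is $\ge0$ (as $N(e-e_i)\ge N(e_n)\ge N(e_i)$) while its right-hand side is $<0$ when $n<8$. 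Hence $Q_b(\Lb)<\tfrac n4$ throughout.

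I do not expect a real obstacle: the substance is packaged in Lemma~\ref{lemindex3c}. The one point that must not be mishandled is the basis-replacement step: it is essential that the deleted index be $n$ and not some $i<n$, for if $y=e-e_i-e_n$ involved $e_n$ the relevant change-of-basis determinant would pick up a factor $\tfrac{d-1}d$ instead of $\tfrac1d$ and $(e_1,\dots,\widehat{e_n},\dots,y)$ would span a proper sublattice of index $d-1$ in $\Lb$. This is exactly why Lemmas~\ref{lemindex3b} and \ref{lemindex3c} only ever discard the vectors $e-e_i-e_n$. Everything else is the elementary quadratic inequality above; in line with the paper's earlier remarks one may add that $\psi(n,d)$ is far from optimal, as it fails to be attained on well-rounded lattices.
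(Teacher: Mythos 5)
Your argument is correct and is essentially the paper's own proof: Lemma~\ref{lemindex3c} supplies the short vector $y$, whose $e_n$-coordinate $\frac1d$ (because only indices $i<j<n$ are ever subtracted) lets it replace $e_n$ in the frame to produce a basis of $\Lb$, whence $Q_b(\Lb)\le N(y)\le\psi(n,d)$. Your explicit check that $\psi(n,d)<\frac n4$ in the stated ranges, and the Watson-identity observation that $d=4$ with $m_2=0$ forces $n\ge 8$, merely spell out what the paper dismisses as ``obvious''.
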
 

\begin{proof} 
The vector $y$ in Lemma~\ref{lemindex3c} is of the form 
$\frac{a_1 e_1+\dots+a_{n-1} e_{n-1}+e_n}d$, so that 
$(e_1,\dots,e_{n-1},y)$ is a basis for $\Lb$, 
and the bound of $N(y)$ of Lemma~\ref{lemindex3c} 
is thus a bound for $Q_b(\Lb)$. 
\end{proof} 

\begin{remark} \label{remm_1=n-1} {\small\rm 
The methods of Proposition~\ref{propindex3}, the proof of which relies 
on the crude bounds of Proposition~\ref{propcrudebound} and 
Corollary~\ref{corcrudebound}, 
can be used more generally to handle the case when $d=4$ and $m_1=n-1$. 
One can prove this way the bound $Q_b<\frac 94$ when $n=9$ 
and $(m_1,m_2)=(8,1)$.} 
\end{remark} 

\subsection{Some more bounds for index 4} 
In this subsection we consider the case when $\Lb/\Lb'$ is cyclic 
of order~$4$. The notation $S_1,S_2,m_1,m_2$ ($m_1\ge 4$) 
is that of Definition~\ref{defm_i}. 

\begin{prop} \label{propindex4a} 
Assume that we have $7\le n\le 10$ and that $\Lb/\Lb'$ 
is cyclic of order $4$. Then: 
\begin{enumerate} 
\item 
If $m_1=4$, we have $\Q_b(\Lb)\le\dfrac{n-3}4<\dfrac n4$. 
\item 
If $m_1=5$, $\Q_b(\Lb)$ is bounded from above by $\frac 98$ if $n=7$, 
and by $\frac{(2n+5)^2}{320}<2$ if $n=8,9,10$. 
\end{enumerate}\end{prop}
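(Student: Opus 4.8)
The plan is to reduce both claims to an inequality between products of norms of bases, then exhibit such bases by replacing one or two of the successive minima, using the index‑$2$ tower below $\Lb$. Rescale $\Lb$ so that $N(e_n)=1$; then $N(e_i)\le1$ for all~$i$, $T=\sum_i N(e_i)\le n$, and every vector of $\Lb\sm\Lb'$ has norm at least $\min\Lb=N(e_1)$. Since $(e_1,\dots,e_n)$ is a frame of successive minima, the $k$‑th smallest norm among any $n$ independent vectors is at least $\lb_k$, so $N(e_1)\cdots N(e_n)=M(\Lb)\det\Lb$, and hence $Q_b(\Lb)\le\prod_i N(f_i)\big/\prod_i N(e_i)$ for every basis $(f_1,\dots,f_n)$ of~$\Lb$. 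Write $e=\tfrac14\big(\sum_{j\in S_1}e_j+2\sum_{j\in S_2}e_j\big)$ (numerator reduced modulo~$4$, support $\{1,\dots,n\}$, $\abs{S_1}=m_1$, $\abs{S_2}=m_2=n-m_1$), and set $e'=\tfrac12\sum_{j\in S_1}e_j\in\Lb\sm\Lb'$, so that $2e=e'+\sum_{j\in S_2}e_j$ and $\Lb'\subsetneq\Lb_1:=\la\Lb',e'\ra\subsetneq\Lb$, both indices being~$2$. The workhorse is the index‑$2$ estimate of Section~\ref{secdenom2}: if $f$ is a shortest vector of its coset $\tfrac1d\sum a_ie_i+\Lb'$ (with $\abs{a_i}\le d/2$), then $N(f-e_i)\ge N(f)$ for every $i$ in the support of~$f$, so Watson's identity gives $N(f)\le\tfrac1{2d}\sum_i\abs{a_i}N(e_i)$. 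Applied to the coset of~$e$ (with $d=4$, $\abs{a_i}\in\{1,2\}$) it yields a representative $\hat e$ with $N(\hat e)\le\tfrac18\big(\sum_{j\in S_1}N(e_j)+2\sum_{j\in S_2}N(e_j)\big)$; applied inside the block $\la e_j:j\in S_1\ra$ (Lemma~\ref{lemindex2}(1)) it yields a representative $x'=\tfrac12\sum_{j\in S_1}\vp_je_j$ of~$e'+\Lb'$ with $N(x')\le\tfrac14\sum_{j\in S_1}N(e_j)$.

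If $e_n$ has coefficient~$1$ one simply replaces $e_n$ by~$\hat e$ (whose $e_n$‑coefficient is $\equiv1\bmod 4$): for $m_1=4$ this gives $Q_b(\Lb)\le N(\hat e)\le\tfrac18\big(4+2(n-4)\big)=\tfrac{n-2}4<\tfrac n4$, and for $m_1=5$ it gives $Q_b(\Lb)\le N(\hat e)\le\tfrac18\big(5+2(n-5)\big)=\tfrac{2n-5}8$, which equals $\tfrac98$ at $n=7$ and is easily checked to be $<\tfrac{(2n+5)^2}{320}$ for $n=8,9,10$. So the case that really needs work is the one in which $e_n$ has coefficient~$2$, i.e.\ $n\in S_2$. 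There one uses the tower: after a global sign change we may assume $\vp_{j_1}=+1$ for an index $j_1$ of largest norm in~$S_1$, so that $e_{j_1}$ does not reappear in the integer vector $e'-x'$; deleting $e_{j_1}$ and inserting $x'$ gives a basis of $\Lb_1$, and then deleting $e_n$ and inserting a shortest representative $\tilde e$ of $e+\Lb_1$ (whose $e_n$‑coefficient over the new basis is odd, since $n\in S_2$) gives a basis of~$\Lb$. Using $N(\tilde e)\le N(\hat e)\le\tfrac18\big(m_1 N(e_{j_1})+2m_2\big)$ (here $\max_{j\in S_2}N(e_j)=N(e_n)=1$), $N(x')\le\tfrac{m_1}4N(e_{j_1})$, $N(e_n)=1$ and $N(e_{j_1})\le1$, one obtains
$$Q_b(\Lb)\le\frac{N(x')\,N(\tilde e)}{N(e_{j_1})\,N(e_n)}\le\frac{m_1}{32}\big(m_1 N(e_{j_1})+2m_2\big)\le\frac{m_1(2n-m_1)}{32}\,,$$
which for $m_1=4$ is $\tfrac{n-2}4$; this completes the proof of part~(1) (and the sharper constant $\tfrac{n-3}4$ can then be extracted by the finer Watson bookkeeping used for $n=6$ in Proposition~\ref{propnorm1}, exploiting the slack $T\le n$).

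For part~(2) the bound just displayed is $\tfrac{5(2n-5)}{32}$, which exceeds $\tfrac98$ already at $n=7$, so the subcase $n\in S_2$ must be handled more carefully. The extra input is that the $S_1$‑block cannot be too small: since $x'$ (respectively the shortest lift $\hat e$) lies in $\Lb\sm\Lb'$ it has norm at least $\min\Lb$, which forces $\sum_{j\in S_1}N(e_j)$ to be bounded below, and since $m_2\le m_1$ here the correction terms in $e'-x'$ and the $S_2$‑part can be played off against the size of $N(e_{j_1})$; keeping $N(e')$ (bounded by the crude bound of Proposition~\ref{propcrudebound} and Corollary~\ref{corcrudebound} in terms of $\max_{j\in S_1}N(e_j)$) and $N(e_{j_1})\le1$ as two parameters and maximising the product $N(x')N(\tilde e)/N(e_{j_1})$ over the admissible region produces exactly $\tfrac{(2n+5)^2}{320}$ (and $\tfrac98$ when $n=7$), the square reflecting a genuine two‑parameter optimisation rather than a single crude estimate. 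I expect this optimisation — together with the sign bookkeeping needed to keep the second block of the tower clean — to be the main obstacle; the remaining ingredients (Watson's identity, Lemma~\ref{lemindex2}, the crude bound) are exactly those already developed in the preceding sections.
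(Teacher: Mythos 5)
Your overall strategy (reduce $Q_b$ to a product ratio over an explicit basis, split according to whether $e_n\in S_1$ or $e_n\in S_2$, and use the index-$2$ tower $\Lb'\subset\Lb_1=\la\Lb',e'\ra\subset\Lb$) is the right one and matches the paper, but your ``workhorse'' estimate is not valid for $d=4$, and this breaks the argument at its core. For $d\ge 3$ a vector cannot in general be simultaneously \emph{shortest in its coset} modulo $\Lb'$ (which is what gives $N(f\pm e_i)\ge N(f)$) and have the \emph{reduced} coefficients $\abs{a_i}\in\{1,2\}$ (which is what makes the right-hand side of Watson's identity equal to $\frac 1{2d}\sum\abs{a_i}N(e_i)$): reducing a coefficient, say $1\mapsto -3$, is a translation by $e_i\in\Lb'$, so the shortest coset vector may have large coefficients, and the reduced representative need not satisfy $N(f\pm e_i)\ge N(f)$. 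In particular the inequality $N(\hat e)\le\frac18\big(\sum_{S_1}N(e_j)+2\sum_{S_2}N(e_j)\big)$ is unproved; sign flips inside $S_1$ change the coset of $e$ modulo $\Lb'$ (indeed a vector with mixed signs $\pm\frac14$ on $S_1$ is not even in $\Lb$), which is exactly why the paper only ever applies Lemma~\ref{lemindex2} to genuine index-$2$ steps, writing $e=\frac{e'+\sum_{j\in S_2}e_j}2$ and flipping only the $e_j$, $j\in S_2$, and $e'$ itself. The same confusion undermines your basis replacements: the $e_n$-coordinate of the shortest representative of $e+\Lb'$ (or of $e+\Lb_1$ over your new basis of $\Lb_1$) is known only modulo $4$ (resp.\ is only known to be an odd multiple of $\frac12$), and if it equals $-3,5,\dots$ (resp.\ $\pm\frac32$) the vectors you keep generate a proper sublattice of $\Lb$, not a basis; the paper's replacements always use representatives with explicitly controlled coefficients.

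Even granting these steps, you do not reach the stated constants, and the place where you stop is precisely where the paper does the real work. Your case $n\in S_1$ gives $\frac{n-2}4$ (not $\frac{n-3}4$) for $m_1=4$, and your case $n\in S_2$ gives $\frac{m_1(2n-m_1)}{32}$, i.e.\ $\frac{5(2n-5)}{32}$ for $m_1=5$, which already exceeds $\frac98$ at $n=7$; the ``finer Watson bookkeeping'' and the ``two-parameter optimisation'' you invoke are exactly the missing proof, not a routine completion. The paper obtains the sharp constants from the frame hypothesis applied \emph{inside the $S_1$-block}: every half-integer combination supported on $S_1$ has norm at least $\max_{i\in S_1}N(e_i)$, so for $m_1=4$ Proposition~\ref{propnorm1} (via Remark~\ref{remdim4}) forces $N(e')\le N(e_\a)\le 1$, whence $N(e)\le\frac{1+m_2}4=\frac{n-3}4$, and for $m_1=5$ it gives $N(e')\le\frac52$; in the case $n\in S_2$ with $m_1=5$ one replaces $e_\a$ by a vector $e''=e'-e_i-e_j$ bounded by Lemma~\ref{lemtuvw}\,(1b), $N(e'')\le\frac{15-2t}{10}$ with $t=N(e')$, and maximises $\frac{(15-2t)(t+m_2)}{40}$ over the admissible range of $t$, which is how $\frac98$ and $\frac{(2n+5)^2}{320}$ arise. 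Your scheme uses only the upper bounds $N(e_i)\le 1$ and therefore cannot see these lower-bound constraints; as written it proves neither part (1) with the constant $\frac{n-3}4$ nor part (2) in the case $e_n\in S_2$.
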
 

\begin{proof} 
We keep the notation $e_1,\dots,e_n$ for the successive minima, 
assuming that $N(e_1)\le\dots N(e_n)=1$, and $\Lb'=\la e_i\ra$. 
Set $e'=\dfrac{\sum_{i\in S_1}\,e_i}2$, so that 
$e=\frac{e'+\sum_{j\in S_2}\,e_j}2$, 
set $\cS=\Big\{\frac{\sum_{i\in S_1}\,e_i}2\Big\}$, 
and denote by $\a$ (resp. $\beta$) the largest subscript 
$i\in S_1$ (resp. $i\in S_2$). Thus $\a=n$ or $\beta=n$. 
By Lemma~\ref{lemindex2}, negating $e_i$ for some $i\in S_2$, 
we may assume that $e$ has the smallest norm among vectors of $\cS$. 

\smallskip 

Assume first that $n=\a$. Then replacing $e_n$ by $e$, we obtain 
a basis for $\Lb$ for which $Q_b(\Lb)\le\frac{N(e')+m_2}4$. 
By Proposition~\ref{propnorm1}, we have $N(e')\le 1$, 
hence $Q_b\le \frac{1+m_2}4=\frac{n-3}4$ if $m_1=4$, 
and $N(e')\le\frac 52$, hence 
$Q_b\le\frac{5/2+m_2}4=\frac{2n-5}4$ if $m_1=5$. 

\smallskip 

Assume now that $n=\beta$. We may no longer replace $e_n$ 
by $e$ since the numerator of $e$ now contains the term $2e_n$. 
We can instead replace $e_\a$ by any vector $e''\in\cS$ 
to be chosen later and $e_n$ by $e$, 
obtaining the upper bound $Q_b(\Lb)\le N(e'')\cdot N(e)$. 

If $m_1=4$, we choose $e''=e'$, and since $N(e')=N(e_\beta)$, 
we again have $Q_b\le\frac{n-3}4$. 

If $m_1=5$, taking $x=v$ with the notation of Lemma~\ref{lemtuvw}, 
(1b), we may achieve $N(e'')\le\frac{15-2t}{10}$, hence 
$N(e'')\cdot N(e)\le\vf(t):=\frac{(15-2t)(t+m_2)}{40}$. 
The maximum of $\vf$ on $\R$ is attained at $t=t_0:=\frac{15-2m_2}4$. 

If $n=7$, i.e., $m_2=2$, we have $t_0>\frac 52$, 
the bound for $t$ of Proposition~\ref{propnorm1}, 
and since $\vf(1)<1$, the maximum of $\vf$ on $[1,\frac 52]$ 
is $\vf(\frac 52)=\frac 98$. 

If $n=8,9,10$, i.e., $m_2=3,4,5$, we have $t_0\in(1,\frac 52)$, 
hence 

\smallskip\ctl{ 
$N(e'')N(e)\le\vf(t_0)=\frac{(2n+5)^2}{320}$ 
if $n=8,9$ or~$10$\,,} 

\smallskip\noi 
slightly larger than the bounds we obtained for $n=\a$. 
\end{proof} 

\begin{remark} \label{remindex4a} 
{\small\rm 
The bounds of Proposition~\ref{propindex4a} 
are optimal if $m_1=4$, and if $n=7$ and $m_1=5$, and attained uniquely 
on well rounded lattices. The bounds for $n=8,9,10$ and $m_1=5$ are 
not optimal, and even the first bound $\frac{2n-5}4$, which applies 
to well-rounded lattices, could be improved, using vectors 
$e-e_i$ or $e-e_i-e_j$, $i,j\in S_1$.} 
\end{remark}

\subsection{Dimension 7} 
We now prove Theorem~\ref{thn<=8} for dimension~$7$, by inspection 
of all possible structures of $\Lb/\Lb'$ when $\Lb'$ is generated 
by a frame of successive minima $e_1,\dots,e_n$ for~$\Lb$, 
that we scale so as to have $N(e_n)=1$. 

\smallskip 

We know from \cite{M1} that $\Lb/\Lb'$ is of one of the types 
$(1)$, $(2)$, $(3)$, $(4)$, $(2^2)$, $(2^3)$, the latter case 
occurring only on the similarity class of~$\E_7$. 
Thus there is nothing to prove if $[\Lb:\Lb']=1\,\text{or}\,8$. 

The case of index~$2$ results from Corollary~\ref{corindex2}, 
and that of $2$-elemen\-tary quotients has been dealt with 
in Subsection~\ref{subsecden2n<=10}. 
{\small(There are two primitive codes of weight $w\ge 4$. 
There weight distributions are $4^2\cdot 6$ and $4\cdot 5^2$, 
so that we have $Q_b=1,\frac 54$, respectively.)} 

The case of index~$3$ results from Proposition~\ref{propindex3}, 
which implies $Q_b(\Lb)\le\frac{29}{21}=1.38...<\frac 74=1.75\,$. 

Consider finally the case when $\Lb/\Lb'$ is cyclic of order~$4$. 
We have $4\le m_1\le 6$ by Watson's 
identity~\ref{propWatsonindentity}, 
$Q_b=1$ if $m_1=4$ or~$6$ by Remark~\ref{remindex4a}, 
and $Q_b\le\frac 98<\frac 74$ if $m_1=5$ 
by Proposition~\ref{propindex4a}. 

\smallskip 

This completes the proof of Theorem~\ref{thn<=8} for all dimensions 
$n\le 7$. 

\medskip 

The bound above for index $3$ is still not optimal, 
and can be improved by making use also of vectors $e-e_i$ 
to $\frac{(n-2)(n^2-2n-9}{(n-1)(5n-18)}\,$ 
($\frac{65}{51}=1.27...$ for $n=7$, still not optimal). 

We summarize in the table below 
our knowledge on optimal bounds for dimension~$7$. 
The lower bounds for cyclic quotients of order $3$ and $4$ 
are attained on the Gram matrices $A_{7,3}$ and $A_{7,4}$ 
displayed after the table below. 

If we restrict ourselves to well-rounded lattices, 
we need not discard the subscript $n$ in the lemmas above. 
I could then show that $\frac{11}9$ 
\linebreak 
$(=1.22\dots)$ is optimal among well-rounded lattices. 
This is probably the general exact bound. 

\begin{table}[ht] 
\vbox{ 
\renewcommand{\arraystretch}{1.3} 
\noindent 
\begin{tabular}{|c|c|c|c|c|c|} 
\hline 
$1$ & $2$ & $3$ & $4$ & $2^2$ & $2^3$ \\ 
\hline 
$1$ & $\frac 74$ & $\frac{11}9\le Q_b<\frac{65}{61}$ & 
$\frac 98$ & $\frac 54$ & $1$ \\ 
\hline 
\end{tabular} 
\medskip 
\caption{Optimal bounds in dimension 7} 
\label{tab:dim7} 
} 
\end{table} 

\hbox{}\vskip-.9cm{\small 
Here are Gram matrices for lattices which realize 
$\frac{H_b}M=\frac{11}9$ and $\frac{H_b}M=\frac 98$ 
for cyclic quotients of order $3$ and $4$, respectively: 

$$A_{7,3}=\left(\stx 22&-6&9&9&9&9&9\\-6&18&3&3&3&3&3\\ 
9&3&18&3&3&3&3\\9&3&3&18&3&3&3\\9&3&3&3&18&3&3\\9&3&3&3&3&18&3\\ 
9&3&3&3&3&3&18\estx\right)\,;\qquad 
A_{7,4}=\left(\stx 9&4&4&4&4&4&4\\4&8&2&2&2&0&0\\4&2&8&2&2&0&0\\ 
4&2&2&8&2&0&0\\4&2&2&2&8&0&0\\4&0&0&0&0&8&0\\4&0&0&0&0&0&8 
\estx\right)\,.$$ 
In both cases the minimum is read on the diagonal entries 
$a_{i,i}$, $i\ge 2$ and $\frac{H_b}M=\frac{a_{1,1}}{a_{2,2}}$.}

\section{Indices 4 and 5 and dimension 8} \label{secindex4-5} 
In this section we complete the proof of Theorem~\ref{thn<=8} 
by calculating sufficient bounds for $\frac{H_b}{M}$ 
in dimension~$8$. However, for further use, we sometimes 
consider dimensions~$n$ which may be greater than~$8$. 

\smallskip 

We keep the notation of the previous section: $\Lb$ 
is an $n$-dimensional lattice and $\Lb'$ denotes its sublattice 
generated by a frame {\small $\cB=(e_1\!,\dots,\!e_n)$} 
of successive minima for~$\Lb$, and we assume that $N(e_n)=1$. 
The notation $m_i$, $S_i$, $T_i$ that we use 
when dealing with cyclic quotients is that of Definition~\ref{defm_i}.

\subsection{Index 4} \label{subsecindex4} 

Though an {\em ad hoc}, relatively short proof could be 
given for~$n=8$, 
we prove below bounds which also apply to dimension~$9$. 

\begin{prop} \label{propindex4b} 
Assume that $\Lb/\Lb'$ is cyclic of order~$4$. 
Then if $n\le 9$, and if either $m_1\le 7$ or $m_1=n$, 
then $Q_b(\Lb)<\frac n4$. 
\end{prop}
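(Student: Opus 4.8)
The plan is to handle the cyclic order-$4$ case by splitting according to the size of $m_1$ (the number of coefficients equal to $1$), exploiting Watson's condition and the bounds already obtained. Since $\Lb/\Lb'$ is cyclic of order $4$ we may write $e=\frac{e_1+\dots+e_{m_1}+2(e_{m_1+1}+\dots+e_{m_1+m_2})}{4}$ with $m_1+m_2\le n$, all other coefficients zero (Definition~\ref{defm_i}, with only values $1$ and $2$ possible since $a_i\le d/2=2$). First I would record the already-available cases: Remark~\ref{remdenom4} forces $m_1\ge 4$ whenever $n\le 6$ but more usefully, combined with Watson's identity applied to $e$, $A=\sum|a_i|=m_1+2m_2$; if $m_1<n$ then necessarily $m_2\ge 1$ and reductions apply. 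When $m_1=4$, Proposition~\ref{propindex4a}(1) already gives $Q_b(\Lb)\le\frac{n-3}{4}<\frac n4$ for $7\le n\le 10$, and when $m_1=5$, Proposition~\ref{propindex4a}(2) gives $Q_b\le\frac{(2n+5)^2}{320}$, which is $<\frac n4$ for $n=8,9$ (one checks $(2n+5)^2<80n$, i.e. $4n^2-60n+25<0$, valid for $n\le 13$). For $m_1=6$ and $m_1=7$ I would extend the argument of Proposition~\ref{propindex4a}: write $e'=\frac{e_1+\dots+e_{m_1}}{2}$, so $e=\frac{e'+e_{m_1+1}+\dots+e_{m_1+m_2}}{2}$, bound $N(e')$ by Proposition~\ref{propnorm1} applied in dimension $m_1$ (giving $N(e')\le N(e_{m_1})\le 1$ if $m_1\le 4$ — not relevant here — and the appropriate small constant for $m_1=5,6$), then bound $N(e)$ in terms of $N(e')$ and $m_2$ using Lemma~\ref{lemindex2}(1) or Lemma~\ref{lemtuvw}, and finally replace $e_\alpha$ (largest index in $S_1$) and $e_\beta=e_n$ (if $\beta=n$) by suitable vectors to produce a basis, giving $Q_b\le N(e'')\cdot N(e)$ for a well-chosen $e''\in\cS$.

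The case $m_1=n$ is exactly Watson's condition with all $a_i=1$ and $A=n=2d$ forcing $n=8$ (since $d=4$); here Proposition~\ref{propWatsoncond}(4) applies: choosing any $i$ with $a_i=1$ — all of them — and replacing $e_i$ by $e-e_j$ for $j\ne i$ yields a basis of vectors of norm $\min\Lb'$, so $H_b(\Lb)=M(\Lb)$, hence $Q_b(\Lb)=1<\frac n4$. So the genuinely new work is $m_1\in\{6,7\}$ for $n=8,9$. For $m_1=6$: if $n=8$ then $m_2\le 2$; if $n=9$ then $m_2\le 3$. Here $e'=\frac{e_1+\dots+e_6}{2}$ lives in a $6$-dimensional sublattice generated by successive minima, so $N(e')\le 4$ by Proposition~\ref{propnorm1}, but that is too crude; instead, since $e_1,\dots,e_6$ are successive minima of $\Lb$ (not merely of the sublattice), and since any vector in $e'+\la e_1,\dots,e_6\ra$ has norm $\ge 1$, Lemma~\ref{lemindex2}(1) gives a vector $e''\in\cS$ with $N(e'')\le\frac{N(e_1)+\dots+N(e_6)}{4}\le\frac 64=\frac 32$. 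Then $e-e''/$ adjustments combined with Watson's identity for $e$ bound $N(e)$: applying Watson's identity to $e$ with $A=2d=8$ gives $\sum_i|a_i|(N(e-\sgn(a_i)e_i)-N(e_i))=0$, and since each $N(e-\sgn(a_i)e_i)\ge$ the relevant minimum this pins down things enough to get $N(e)\le$ a constant. Replacing $e_\alpha$ by $e''$ and $e_n$ by $e$ gives $Q_b\le N(e'')\cdot N(e)$, which one checks is $<\frac n4$.

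The key steps in order: (i) reduce to $d=4$, $a_i\in\{1,2\}$, and record $m_1\ge 4$; (ii) dispose of $m_1=4,5$ by citing Proposition~\ref{propindex4a}; (iii) dispose of $m_1=n$ (so $n=8$) by Proposition~\ref{propWatsoncond}(4), getting $Q_b=1$; (iv) for $m_1=6,7$, introduce $e'=\frac{\sum_{i\in S_1}e_i}{2}$ and use Lemma~\ref{lemindex2}(1) in dimension $m_1$ to find $e''\in\cS$ with $N(e'')\le\frac{m_1}{4}$ (using $N(e_i)\le 1$); (v) bound $N(e)$ via Watson's identity for $e$ together with the bounds $N(e-e_i)\ge N(e_i)$ and, for $i\in S_2$, $N(e-e''')\ge 1$ for vectors $e'''$ obtained by the appropriate sign changes — tracking the arithmetic to get $N(e)\le\frac{2+m_2}{2}$ or similar; (vi) form a basis by swapping $e_\alpha\to e''$ and $e_n\to e$ (valid since $\alpha=n$ or $\beta=n$, handled as in Proposition~\ref{propindex4a}), concluding $Q_b(\Lb)\le N(e'')N(e)<\frac n4$ by a direct numerical comparison for $n=8,9$.

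The main obstacle will be step (v): getting a bound on $N(e)$ sharp enough that the product $N(e'')\cdot N(e)$ stays below $\frac n4$ when $m_1=6$ and $m_2$ is as large as $3$ (the case $n=9$, $m_1=6$, $m_2=3$ being the tightest). The crude bound of Proposition~\ref{propcrudebound} gives roughly $N(e)\le\frac{\sum_i m_i(m_i+1)/2\cdot i^2+\cdots}{d^2}$ which for $(m_1,m_2)=(6,3)$ is far too large, so I expect to need the refined Watson-identity bookkeeping of Lemma~\ref{lemtuvw}-style, possibly applied to $e-e_i$ for some $i\in S_2$ (where $a_i=2=d/2$, so $e'''=e-e_i$ has strictly smaller $A$ and one extracts an extra inequality, as noted at the end of Remark~\ref{remdenom4}). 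If the straightforward estimate fails for the extremal $(6,3)$ case, the fallback is to invoke the classification of $\Z/4\Z$-codes from \cite{M1} to rule out that particular $(m_1,m_2)$ profile, or to show any lattice achieving it is well-rounded (forcing $T=n$) and then treat it by the sharper well-rounded analysis — but the intent is to avoid the classification here and close the estimate by hand.
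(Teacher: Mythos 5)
Your reduction of $m_1\le 5$ to Proposition~\ref{propindex4a} matches the paper, but there are two genuine gaps. First, your step (iii) is wrong: $m_1=n$ does \emph{not} force $n=8$. Watson's condition $A=2d$ is a hypothesis, not an automatic consequence; for $n=9$ with $e=\frac{e_1+\dots+e_9}{4}$ one has $A=9>2d=8$, Watson's condition fails, and Proposition~\ref{propWatsoncond}(4) is inapplicable. This $(m_1,m_2)=(9,0)$ case is squarely inside the statement (``$m_1=n$, $n\le 9$'') and is exactly why the paper invokes Proposition~\ref{propindex3} (via Lemmas \ref{lemindex3a}--\ref{lemindex3c}, which cover $d=4$, $m_2=0$, $n\le 13$) rather than Watson's condition. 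Your argument covers only $n=8$ there.

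Second, for the genuinely new cases $m_1=6,7$ your steps (iv)--(vi) do not close, and not only in the $(6,3)$ case you flag. With $N(e'')\le\frac{m_1}{4}$ fixed and $N(e)\le\frac{t+m_2}{4}$, $t=N(e')$, the product already exceeds the target for $(m_1,m_2)=(6,2)$ (it gives $\frac{3}{2}\cdot\frac{3}{2}=\frac 94>2$), and is worse for $(6,3)$, $(7,1)$, $(7,2)$. The missing idea is the anti-correlation the paper exploits: by Lemma~\ref{lemtuvw} applied to $e'$ the replacement vector ($v$ or $w$, chosen according to whether $u\le 2$ or $u\ge 2$) has norm bounded by a \emph{decreasing} function of $t$ (e.g.\ $\frac{10-t}{5}$ or $\frac{15-t}{10}$ for $m_1=6$), while $N(e)\le\frac{t+m_2}{4}$ is increasing in $t$; one then maximizes $\vf(t)$ over $t\in[1,4]$ (Proposition~\ref{propnorm1}) and checks the maximum stays below $\frac n4$. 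For $m_1=7$ even this is insufficient and the paper brings in Watson's identity for denominator $4$ to get $\sum_{i\in S_1}N(e-e_i)\le m_1+2m_2+(m_1-8)N(e)$, the auxiliary vector $\tilde e=\frac{(e'-2e_i)+e_\beta}{4}$, and a two-variable optimization in $(u,t)$ over an explicit domain. Your fallback suggestions (code classification from \cite{M1}, or restriction to well-rounded lattices) are departures the paper does not need and which you have not carried out, so as written the proposal proves the proposition only for $m_1\le 5$ and for $(m_1,m_2)=(8,0)$.
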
 

\begin{proof} 
The result has been proved previously 
if $m_1=n$ (Proposition~\ref{propindex3}), 
if $m_1\le 5$ (Proposition~\ref{propindex4a}), 
and if $n=7$. 


The proof for $m_1=6$ is an extension 
of that of Proposition~\ref{propindex4a} whereas we need sharper 
inequalities for $m_1=7$. 

In all cases we assume that the norm of $e$ is minimal among those 
of the vectors $\frac{e'+\sum_{i\in S_2}\,\pm e_i}2$. 

\smallskip 

$\underline{m_1=6}$ (thus, $m_2\in\{2,3\}$). 
With the notation of Lemma~\ref{lemtuvw}, 
we choose $x=v$ if $u\le 2$ and $x=w$ if $u\ge 2$, 
bounding this way $N(x)N(e)$ by the functions $\vf_1$ and $\vf_2$ 
below, to to be considered on the interval $[1,4]$ 
by Proposition~\ref{propnorm1}: 
$$u\le 2\,:\ \vf_1(t)=\frac{(10-t)(t+m_2)}{20}\,;\ 
u\ge 2\,:\ \vf_2(t)=\frac{(15-t)(t+m_2)}{40}\,.$$ 
The maximum on $\R$ of $\vf_1$ is attained at $t_1=\frac{10-m_2}2$ 
and that of $\vf_2$ at $t_2=\frac{15-m_2}2$. 
Since $m_2=2$ or~$3$, we have $t_1\in[1,4]$ and $t_2>4$. 
Hence $\frac{H_b(\Lb)}{M(\Lb)}$ is bounded from above 
by $\vf_1(t_1)$ if $u\le 2$ and $\max(\vf_2(1),\vf_2(4))=\vf_2(4)$ 
if $u\ge 2$, that is, in terms of $n=6+m_2$, 

\medskip\ctl{\small 
$\vf_1(t_1)=\frac{(n+4)^2}{80}=\frac n4-\frac{(10-n)(n-2)+4}{80} 
\nd \vf_2(4)=\frac{11(n-2)}{40}=\frac n4-\frac{22-n}{40}\,,$} 

\smallskip\noi 
which proves the result (even up to $n=10$). 

\medskip 

To handle lattices with $m_1\ge 7$ we return to Watson's identity 
for denominator~$4$, namely 
$$\sum_{i\in S_1}\,N(e-e_i)+2\sum_{i\in S_2}\,N(e-e_i)= 
T_1+2T_2+(m_1+2m_2-8)\,N(e)\,,$$ 
which implies, since $N(e-e_i)\ge N(e)$ for all $i\in S_2$, 
$$\sum_{i\in S_1}\,N(e-e_i)\le m_1+2m_2+(m_1-8)\,N(e)\,.$$ 

\smallskip 

$\underline{m_1=7,m_2=1}$. 
We have $N(e)\le\frac{t+1}4$, hence $t\ge 3$, 
and by the crude estimate, $t\le 7$, hence $u\le 1+\frac{37}7\le 4$. 
Using $\tilde e=\frac{(e'-2 e_i)+e_\beta}4$ of norm 
$N(\tilde e)\le \frac{2u-t+3}4$ 
instead of $e$ and taking $x=w$, of norm $N(x)\le\frac{21-u-t}{10}$ 
(Lemma~\ref{lemtuvw}) 
we reduce ourselves to bound the function 
\linebreak 
$\vf(t)=\frac{(21-u-t)(2u-t+3)}{40}$ 
in the domain defined by the inequalities 
\linebreak 
$1\le u\le 1+\frac{3t}7\le 4$ and $3\le t\le 7$. 
As a function of $u$, $\vf'$ is zero for $u=\frac{39-t}4\ge 8>4$, 
so that we have
$\vf(u,t)\le\vf(1+\frac{3t}7,t)=\frac{(35-7)(14-t)}{4\cdot 7^2}$, 
a decreasing function on $[3,7]$. Its maximum on $[3,7]$ 
is attained for $t=3$ and is equal to 
$\frac{32\cdot 11}{4\cdot 7^2}=\frac{88}{49}<2$. 

\smallskip 

$\underline{m_1=7,m_2=2}$. This time we find 
$\vf(t)=\frac{(21-u-t)(2u-t+4)}{40}$, 
to be considered in the domain 
$1\le u\le 1+\frac{3t}7$ and $2\le t\le 7$. As a function of $u$, 
its maximum is 
$M_1=\vf(1+\frac{3t}7,t)=\frac{(42-t)(14-t)}{4\cdot 7^2}$. 
For $t\ge 3$ we have $M_1\le\frac{429}{196}=2.18...<\frac 94$. 
For $t\in[2,3]$, we use $N(x)N(e)\le\frac{(21-u-t)(t+2)}{40}$, 
which as a function of $t$ is maximum at $t=3$ and then equal 
to $\frac{18-u}8<\frac{17}8<\frac 94$. 
\end{proof} 

This completes the proof of Theorem~\ref{thn<=8} for index~$4$. 


\subsection{Index 5} \label{subsecindex5} 
We use as for denominator~$4$ the notation 
$m_1,m_2$, $S_1,S_2$. The cosets modulo $\Lb'$ 
are those of $0$, $\pm e$ and $\pm e'$ where 
$$e=\frac{\sum_{i\in S_1}\,e_i+2\sum_{i\in S_2}\,e_i}5 
\ \nd\ e'=\frac{2\sum_{i\in S_1}\,e_i-\sum_{i\in S_2}\,e_i}5\,.$$ 
We have $e'\equiv 2e\mod\Lb'$ 
so that exchanging $e$ and $e'$ and negating the $e_i$ with $i\in S_2$ 
if need be, we may assume that we have $m_1\ge m_2$. 
Then by Proposition~\ref{propWatsonindentity}, 
we have $(m_1,m_2)=(4,4)$, $(5,3)$ or $(6,2)$ if $n=8$ 
(and $(5,4)$, $(6,3)$, $(7,2)$ or $(8,1)$ if $n=9$), 
and by Proposition~\ref{propWatsoncond}, $\Lb$~has a basis of minimal 
vectors if $(m_1,m_2)=(6,2)$ (or~$(8,1)$). 

If $m_1=m_2=4$, by an identity of Zahareva 
(\cite{M1}, Section~9), we have $N(e-e_i)=N(e_i)$ for $i=5,6,7,8$ 
and $N(e'-e_i)=N(e_i)$ for $i=1,2,3,4$. Since $(e_i)$ is a frame 
of successive minima, we have 
$N(e_1)\le N(e_8)\le N(e'-e_1)=N(e_1)$, 
which shows that $e_i$, $e-e_j,j\ge 4$ and $e'-e_k,k\le 4$ 
are minimal vectors and that $(e_1,\dots,e_7,e-e_1)$ is a basis 
of minimal vectors for $\Lb$. 

\smallskip 

To handle the case when $(m_1,m_2)=(5,3)$, we shall use 
the crude bound of Proposition~\ref{propcrudebound}, which reads 

\medskip\ctl{ 
$N(e)\le\frac{m_1(m_1+1)/2+2m_2(m_2+1)}{25}$\,,} 

\smallskip\noi 
together with Watson's identity 

\smallskip\ctl{ 
$\sum_{i\in S_1}\,(e-e_i)+2\sum_{i\in S_2}\,(e-e_i)= 
T_1+2T_2+(m_1+2m_2-10)N(e)\,,$} 

\medskip\noi 
considering separately the cases $n\in S_1$ and $n\in S_2$. 

\smallskip 

If $n\in S_1$, using the obvious lower bound $N(e-e_i)\ge N(e_i)$ 
when $i\in S_2$ or $i=n$, we see that there exists among 
the vectors $e-e_i$, $i\in S_1\sm\{n\}$ an $x$ of norm 
$N(x)\le 1+\frac{m_1+2m_2-10}{m_1-1}\,N(e)$. 
Then $(e_1,\dots,e_7,x)$ is a basis for~$\Lb$. 
When $(m_1,m_2)=(5,3)$, the bounds above are 

\smallskip\ctl{ 
$N(e)\le\frac{69}{25}$ 
and $N(x)\le1+\frac{N(e)}4\le\frac{169}{100}=1.69<2$\,.} 

\smallskip 

If $n\in S_2$ (i.e., if $e_n=e_8$) we first observe that we have 
$e'=\frac{-e+\sum_{i\in S_1}e_i}2$, so that some vector 
$x=e'-e_{i_1}-\dots e_{i_k}$, $i_1,\dots,i_k\in S_1$, 
has a norm $N(x)\le\frac{N(e)+m_1}4$ (Lemma~\ref{lemindex2}). 
Again $(e_1,\dots,e_7,x)$ is a basis for $\Lb$, 
and when $(m_1,m_2)=(5,3)$, we have 

\smallskip\ctl{ 
$N(x)\le\frac{N(e)+5}4\le\frac{69/25+5}4=\frac{194}{100}<2$\,.}

\subsection{Dimension 8} \label{subsdim8} 

We now prove Theorem~\ref{thn<=8} for $8$-dimensional lattices, 
namely that in dimension~$8$, $Q_b=\frac{H_b}M$ is bounded 
from above by~$2$, with equality only on centred cubic lattices. 
We consider the various possible structures of $\Lb/\Lb'$, 
and recall from \cite{M1} that if $[\Lb:\Lb']>8$ 
then $\Lb$ is similar to $\E_8$, which has a basis of minimal vectors, 
so that it suffices to consider indices $[\Lb:\Lb']\le 8$, 
excluding cyclic quotients of order $7$ or $8$ which do not 
exist in dimension~$8$. 

\smallskip 

The case of $2$-elementary quotients has been dealt with 
in Section~\ref{secdenom2}, so that it suffices to consider 
quotients $\Lb/\Lb'$ which are either cyclic of order $3$ to $6$ 
or of type $4\cdot 2$ and to show that we then have the strict 
inequality $Q_b(\Lb)<2$. 
We now consider successively the five possible cases for the maximal 
index of $\Lb$. 

\smallskip 

\noi\bl $\imath=3$. 
This is Proposition~\ref{propindex3}. 

\smallskip 
\noi\bl $\imath=4$. 
This results from Proposition~\ref{propindex4b}. 

\smallskip 
\noi\bl $\imath=5$. 
This results from Subsection~\ref{subsecindex5}. 

\smallskip 
\noi\bl $\imath=6$. 
With the notation $S_i,m_i$ for $i=1,2,3$, we have 
$\Lb=\la\Lb',e\ra$ where $e=\frac{\sum_i\,i\sum_{j\in S_i}\,e_j}6$. 
Besides $e$ we also consider 

\smallskip\ctl{ 
$f=\frac{\sum _{i\in S_1}\,e_i-\sum _{j\in S_2}\,e_j}3\ \nd\ 
g=\frac{\sum _{i\in S_1}\,e_i+\sum _{k\in S_3}\,e_k}2$\,,} 

\smallskip\noi 
in order to apply previous results for denominators $2$ and~$3$. 
Set $f_i=f-e_i$ if $i\in S_1$ and $f+e_i$ if $i\in S_2$. 

There are six $\Z/6\Z$-codes listed in \cite{M1}, 
among which five define lattices having a basis of minimal vectors. 
(This can be easily checked using Section~9 of \cite{M1}, 
as we did above for $(m_1,m_2)=(4,4)$ with denominator~$5$.) 
The remaining code has $(m_1,m_2,m_3)=(3,3,2)$. 
Since $m_1+m_3=6$, Watson's identity shows that the vectors 
$e_i$ and $f_i$ for $i\in S_1\cup S_2$ have equal norm. 
Consider two subscripts $i,j\in S_2$, and in the basis 
$(e_i)$ for $\Lb'$, replace $e_i$ by $f_j$. 
The we obtain a new frame of successive minima, which spans 
a lattice $L$ such that $\Lb=\la L,g\ra$. 
We are thus reduced to index~$2$, 
and this proves that we have 
$Q_b(\Lb)\le\frac{m_1+m_3}4=\frac 54$ in this case. 

\smallskip 
\noi\bl $\imath=8$. Here there are three codes over $\Z/4\Z$, 
and in each case we have $\Lb=\la\Lb',e,f\ra$ 
for vectors $e$ and $f$ of denominators $4$ and $2$, respectively. 
In all cases $\Lb$ has a basis of minimal vectors: 
in the first case because $e$ and $f$ are minimal, 
and in the remaining two cases because these are known lattices, 
namely a lattice on a Voronoi path \hbox{$\E_8$---$\E_8$} with $s=75$ 
discovered by Watson, and~$\E_8$. 

\smallskip 

This completes the proof of Theorem~\ref{thn<=8}. \qed 


\section{Beyond dimension 8} \label{secdim9} 

In this section we collect various results and remarks 
concerning dimensions $n>8$. In the first subsection 
we prove Theorem~\ref{thn=9}, then extend it to some cases 
concerning indices between $5$ and~$8$. 
We then consider in the second subsection some extensions 
Theorem~\ref{thn=9} for well-rounded lattices. 
Finally in a short last subsection we make 
a few remarks on larger dimensions.

\subsection{Proof of Theorem~\ref{thn=9}} 
\label{subsecdim9a} 
We now proceed to the proof of 
\linebreak 
Theorem~\ref{thn=9} by looking 
successively at the various structures of $\Lb/\Lb'$ 
listed in its statement. As usual we restrict ourselves 
to primitive codes, since otherwise the result follows 
from the bounds we proved for dimensions $n\le 8$. 

\begin{proof} 

\bl\underbar{$2$-elementary quotients}. These have been dealt with 
in Subsection~\ref{subsecden2n<=10}. 

\smallskip 

\bl \underbar{$3$-elementary quotients}. 
The case when $[\Lb:\Lb']=3$ is Proposition~\ref{propindex3}. 
Otherwise we have $[\Lb:\Lb']=9$ and there are three admissible 
ternary codes, listed in Table~6 of \cite{K-M-S}, 
all of which have a basis $(w_1,w_2)$ with $\wt(w_1)=6$ 
and $\wt(w_2)=6,6$ and~$7$, respectively. 
Since $\wt(w_1)=6$ Watson's identity shows that the lattice 
generated by $\Lb'$ and a lift of $w_1$ has a basis of successive 
minima, so that we are reduced to the case of index $3$ 
and dimension $\wt(w_2)$, for which we know a bound for $Q_b(\Lb)$ 
which is much smaller than~$\frac 94$. 

\smallskip 

\bl\underbar{Index $4$}. 
We need only consider cyclic quotients, 
classified by pairs $(m_1,m_2)$ with $m_1\ge 4$ and $m_1+m_2=9$. 
The bound $Q_b<\frac 94$ has been proved in 
Proposition~\ref{propindex4b} if $m_1=9$ or $m_1\le 7$, 
so that we are left with the case when $(m_1,m_2)=(8,1)$, for which 
the methods of Subsection~\ref{subsecindex4} do not suffice. 
This case can be solved by bounding from above the smallest norm 
of a vector $e-e_i-e_j$, with the same line of proof than that 
of Lemmas \ref{lemindex3a} to~\ref{lemindex3c}. 
The details are left to the reader. 
Note however that the results of Subsection~\ref{subsecindex4} 
suffice for well-rounded lattices. 

\bl \underbar{Index $>9$}. The {\em PARI-GP} companion file 
{\em Gramindex.gp} to \cite{K-M-S} shows that there exists 
for every lattice a Gram matrix having diagonal entries equal 
to its minimum, except for the matrix $a9f62$, which acquires 
such a diagonal after an $L L L$-reduction. Hence all lattices 
$\Lb$ with $[\Lb:\Lb']\ge 10$, except possibly those having 
a $2$-elementary quotient of order~$16$, indeed have a basis 
of minimal vectors, hence satisfy \hbox{$Q_b(\Lb)=1$}. 
(For codes over $\F_2$ one has $Q_b=1$ or $\frac 54$.) 

\bl\underbar{Index $9$}. We need only consider cyclic quotients. 
Six codes are displayed in Table~2 of \cite{K-M-S}. 
For the first four, with $s=84,50,136,53$, respectively, 
the file {\em Gramindex.gp} shows the existence of bases 
of minimal vectors. For the remaining two, 
and further in the sequel, departing from our previous convention, 
we order the $e_i$ choosing successively 
vectors from $S_1$, then $S_2,\,\dots$, and write $e$ as successive 
sums having denominator~$3$ namely 
{\small 
$$e=\frac{e'+e_5+e_6+e_7+e_8+e_9}3\text{ with } 
e'=\frac{e_1+e_2+e_3+2e_4+e_8+e_9}3$$} 
and 
{\small 
$$e=\frac{e'+e_4+e_5+e_6+e_7+e_8+e_9}3\text{ with } 
e'=\frac{e_1+e_2+2e_3-e_4+e_8+e_9}3\,.$$} 
Using Watson's identity we see that in both cases, 
the successive minima on the support of $e'$ are equal, 
and that the same property holds for $e$ in the first case 
(and then $Q_b=1$) whereas we may apply 
Proposition~\ref{propindex3} for dimension~$7$ 
in the second case. (There are then $15$~minimal vectors, 
which all lie in $\Lb'$ or $\pm e'+\Lb'$.) 
\end{proof} 

\subsection{More on index 9} 
\label{subsecdim9b} 

We now refer to Tables 2 and~6 of \cite{K-M-S}, and use the notation 
$C_{d,i}$, $d=5$, $6$, $7$, $8$ or $4\cdot 2$, to denote 
the i-{\thh} class of lattices with $\Lb/\Lb'$ of type $(d)$ 
in the corresponding table. Here $i$ runs from $1$ to $i(d)$, where 
$i(5)=4$, $i(6)=20$, $i(7)=8$, $i(8)=19$, and $i(4\cdot 2)=26$. 

In this table (as in \cite{M1}) $s$ (resp. $s'$) is the number 
of pairs of necessary minimal vectors for $\Lb$ (resp. $\Lb'$). 
Thanks to a deformation argument the tables could be constructed 
using only well-rounded lattices; 
in our context $s$ (resp. $s'$) is the minimal number of pairs 
of representatives of the successive minima for $\Lb$ (resp. $\Lb'$). 

\begin{prop} \label{propindex5-8} 
Let $\Lb$ be a lattice belonging to a class $C_{d,i}$, 
$d=5$, $6$, $7$, $8$ or $4\cdot 2$. Then if $s>9$, 
$Q_b(\Lb)$ is strictly smaller than $\frac 94$. 
{\small\rm 
[The number of classes satisfying these conditions are 
$1$, $11$, $5$, $16$, and $23$, respectively.]} 
\end{prop}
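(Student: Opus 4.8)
The plan is to use the extra minimal vectors guaranteed by $s>9$ to reduce to cases already settled, falling back on the crude bounds of Section~\ref{secbackground} and the code lists of \cite{K-M-S} in the residual classes. Rescale so that $\min\Lb=N(e_n)=1$ and, as in Subsection~\ref{subsecdim9a}, restrict to a primitive code with $\min\Lb=\min\Lb'$. First I would reduce to $\Lb$ well-rounded --- the tables of \cite{K-M-S} are realised on well-rounded lattices, and passing to a non-well-rounded lattice only decreases $T=\sum_iN(e_i)$, which enters all the relevant estimates with a favourable sign. Then $N(e_1)=\dots=N(e_9)=1$, so $Q_b(\Lb)$ is just the minimum of $\prod_iN(f_i)$ over bases $(f_1,\dots,f_9)$ of~$\Lb$, the number $s$ is the number of pairs of minimal vectors of~$\Lb$, and $s>9$ says that $\Lb$ has minimal vectors beyond those of any one frame of successive minima.

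Given a frame $e_1,\dots,e_9$ spanning a sublattice $L$ of~$\Lb$ with $\Lb=\la L,e\ra$ in the cyclic case (the type $4\cdot2$ adding a denominator-$2$ generator, handled as in Subsection~\ref{subsdim8}), and with $e=\frac{a_1e_1+\dots+a_9e_9}{d}$, $0\le a_i\le\frac d2$ and the data $S_i,m_i$ of Definition~\ref{defm_i}, I would argue by a dichotomy on the minimal vectors. Case (a): $\Lb$ has a minimal vector $g\notin L$. Reorder so that the largest index in the numerator of~$g$ is~$9$ and replace $e_9$ by~$g$; then either $(e_1,\dots,e_8,g)$ is a basis of~$\Lb$, whence $Q_b(\Lb)=1$, or $\la e_1,\dots,e_8,g\ra$ is a new frame-generated sublattice of~$\Lb$ of strictly smaller index. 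Iterating, either we reach index $\le 4$ --- where Theorem~\ref{thn=9} and Propositions~\ref{propindex3}, \ref{propindex4b} give $Q_b(\Lb)<\frac94$ --- or we land in Case (b) for the current sublattice. Case (b): all minimal vectors of~$\Lb$ lie in the frame-sublattice~$L$, which then carries more than $9$ pairs of minimal vectors; each extra minimal vector $v\in L$, via $N(v)=1$ and the inequalities $N(e_j-e_i)\ge N(e_j)$, forces a block of scalar products $e_i\cdot e_j$ to vanish, exactly as in the equality analysis of Lemma~\ref{lemindex2} and Proposition~\ref{prop2elem}. Feeding these equalities into the crude bound of Proposition~\ref{propcrudebound}, applied successively to $e$, $e-e_i$ and $e-e_i-e_j$ in the manner of Lemmas~\ref{lemindex3a}--\ref{lemindex3c} and Proposition~\ref{propnorm1}, brings the norm of a suitable $e-e_i-e_j$, hence of a replacement for~$e_9$ (and if necessary for an earlier~$e_k$), below~$\frac94$.

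The argument is not a one-line corollary because Case (b) needs to know \emph{which} scalar products vanish, i.e.\ it needs the admissible code read off Tables~2 and~6 of \cite{K-M-S}; this is why the statement comes with the explicit counts $1,11,5,16,23$ of offending classes. In practice I would run through those $56$ lattices, in each either locating a minimal vector outside the current frame-sublattice and reducing the index as in~(a), or using the forced orthogonalities to run the iterated crude bound of~(b), and checking the resulting product of norms is $<\frac94$; each such check is routine once the code is fixed. The main obstacle is Case (b) for $d=5$ and $d=6$, where the purely a priori estimates (for instance the bound $\frac{n(n+1)}{2d^2}$ of Corollary~\ref{corcrudebound}, which is well below $\frac94$ only for $d$ fairly large) leave a thin margin, so one genuinely must exploit the code-induced equalities to cross below~$\frac94$; a minor point is checking that the well-rounded reduction is legitimate for $Q_b$ itself and not only for the enumeration of codes, which holds since dropping well-roundedness only lowers~$T$.
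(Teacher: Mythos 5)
Your overall strategy (use the extra minimal vectors forced by $s>9$ to pass to a frame-sublattice of smaller index, then fall back on the results already proved for small indices) is indeed the skeleton of the paper's argument: the paper first notes that $s>9$ forces $s>s'$, so the lattice $L$ generated by all the successive minima of $\Lb$ satisfies $[\Lb:L]<[\Lb:\Lb']$ and, by inspection or by \cite{M-S}, has a basis of successive minima. But from that point on your proposal has a genuine gap: the actual content of the proof is the class-by-class verification, and you defer it entirely ("each such check is routine once the code is fixed"). The paper performs it by reading the explicit Gram matrices and minimal-vector lists in the companion file {\em Gramindex.gp} of \cite{K-M-S}: one checks whether $a_{1,1}=a_{2,2}$ (then $Q_b=1$, which already disposes of $d=5$ and $d=7$), then whether the leading components of the minimal vectors are coprime, and only for the residual classes ($6+2+5$ of them) does one identify $[\Lb:L]\in\{2,3,4\}$ and reduce explicitly to index~$2$ (Lemma~\ref{lemindex2}, giving e.g.\ $Q_b\le\frac{m_1+m_3}4$ or $\le 2$), index~$3$ (Proposition~\ref{propindex3}), or index~$4$ in dimension $7$ or $8$ (Proposition~\ref{propindex4b}). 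Without this data-driven analysis nothing guarantees that your iteration terminates in a situation covered by the earlier propositions, and indeed the termination point in the paper is not always "index $\le 4$": several classes are settled by exhibiting a basis of minimal vectors outright, others by an index-$2$ or index-$3$ reduction in a smaller dimension.

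Two specific steps of your fallback are also unsound. In your Case~(b) you claim that an extra minimal vector $v\in L$ "forces a block of scalar products $e_i\cdot e_j$ to vanish"; this is false for minimal vectors of the frame-sublattice itself: $N(e_i-e_j)=N(e_i)=N(e_j)$ forces $e_i\cdot e_j=\tfrac12 N(e_i)$, not $0$ (the orthogonality conclusions of Lemma~\ref{lemindex2} and Proposition~\ref{prop2elem} concern equality cases for vectors with denominator $2$, a different situation), so the subsequent "feed the orthogonalities into the crude bound" step has no basis. Moreover, your opening reduction to well-rounded lattices, justified by "dropping well-roundedness only lowers $T$, which enters with a favourable sign", is exactly the unproved Conjecture~\ref{conjwellrounded} of the paper ($T$ appears in the bounds, but the quantity to be bounded is a quotient of norms, and the paper explicitly says an a priori proof of this would simplify everything); the paper avoids needing it here because $s$ counts necessary minimal vectors of the class, so the arguments apply to every lattice in $C_{d,i}$, not just well-rounded representatives.
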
 

\begin{proof} 
We first observe that $s>9$ implies $s>s'$. 
This shows that in all cases, the lattice $L$ generated 
by the successive minima of $\Lb$ satisfies $[\Lb:L]<[\Lb:\Lb']$. 
By inspection (or by \cite{M-S}) we see that $L$ actually has 
a basis made of successive minima. 

\smallskip 

As above for index~$9$ we consider the diagonal entries $a_{i,i}$ 
in the file {\em Gramindex.gp}. In all cases we have 
$a_{1,1}\ge a_{2,2}=\dots=a_{9,9}=\min\Lb$. 
If $a_{1,1}=a_{2,2}$ then $\Lb$ has obviously a basis of minimal vectors, 
which implies $Q_b(\Lb)=1$. This applies to denominators $5$ and $7$. 

Otherwise we list its minimal vectors and consider the leading 
components. If some leading component is equal to~$1$, 
we again have a basis of minimal vectors. This holds more 
generally if the leading components are coprime, which solves 
one more case with $d=6$ (and an $L L L$-reduction then produces 
explicitly a basis of minimal vectors). 
If the $\gcd$ of the leading components is $>1$, 
then $[\Lb:L]$ takes one of the values $2$, $3$ or $4$, 
and we need a closer look at minimal vectors. 

\smallskip 

\bl $d=6$. There remains six classes to consider. For two of them 
we have $m_1+m_3=4$, so that $\Lb$ contains to index~$3$ 
a lattice having a basis of successive minima, to which we may apply 
Proposition~\ref{propindex3}, and in the remaining four cases, 
we have $m_1+m_2=6$, so that $\Lb$ contains to index~$2$ a lattice 
having a basis of successive minima, to which we may apply 
Lemma~\ref{lemindex2}, after having checked that we may write 
$\Lb=\la L,\frac{e_{i_1}+\dots+e_{i_k}}2\ra$ for some $k\le 8$, 
which then ensures the upper bound $Q_b(\Lb)\le\frac 82=2$. 
\newline{\small 
The worst case is afforded by the code $(2,4,3)$, for which 
we write 

\smallskip\ctl{$e= 
\dfrac{(e_1+e_2+2e_3-e_4-e_5-e_6)/3+e_4+e_5+e_6+e_7+e_8+e_9}2\,,$} 

\smallskip\noi 
obtaining the (indeed strict) bound $Q_b(\Lb)\le\frac 74$.} 

\smallskip 

\bl $d=8$. There remains two classes to consider, with corresponding 
codes of type $(2,4,2,1)$ (matrix $a9j8$) 
and $(3,1,3,2)$ (matrix $a9s8$). 

In the first case we set $e'=\frac{e_1+e_2+e_7+e_8}2$ 
and $L=\la\Lb',e'\ra$ and write 
\linebreak 
$e=\frac{e'+e_3+\dots+e_8+2e_9}4$, so that we are reduced 
to the case of a cyclic quotient of order~$4$ in dimension~$8$. 

In the second case we set $e'=\frac{e_1+e_2+e_3-e_5-e_6-e_7+2e_4}4$ 
and write \linebreak $e=\frac{e'+e_5+e_6+e_7+e_8+e_9}2$ 
so that we are reduced to index~$2$ in dimension~$6$. 

\smallskip 

\bl $d=4\cdot 2$. There remains five classes to consider, 
for which Table~7 of \cite{K-M-S} displays a representation 
$\Lb=\la\Lb',e,f\ra$ with $4e$ and $2f$ in $\Lb'$. 
In all cases 
(matrices $a9g42$, $a9j42$, $a9r42$, $a9s42$ and $a9t42$) 
the support of $e$ is of length $7$ or~$8$ 
and that of $f$ of length~$4$, 
so that writing $\Lb=\la L,e\ra$, we are reduced to the case 
of index~$4$ in dimension $7$ or~$8$. 

\smallskip 

This completes the proof of the proposition. 
\end{proof} 

\begin{remark} \label{remindex42} {\small\rm 
The bound $Q<\frac 94$ also holds for all quotients $\Lb/\Lb'$ 
of type $4\cdot 2$. Indeed in the three cases where $s=9$ 
in Table~7 of \cite{K-M-S}, $f$ has a support of length~$5$, 
which implies $Q_b(\Lb)\le\frac 54\,B$ where $B$ is the bound 
previously obtained for cyclic quotients of order~$4$ 
with $(m_1,m_2)=(5,2)$, $(6,2)$ and $(7,1)$, 
namely $\frac 98$, $\frac{66}{40}$ and $\frac{88}{49}$, 
that is $Q\le\frac{45}{32}<1.41$, $Q\le\frac{33}{16}<2.1$ 
and $Q\le\frac{110}{49}=2.24...$, respectively. 
The exact bounds are probably much smaller.} 
\end{remark} 

Taking into account 
Proposition~\ref{propindex5-8} and Remark~\ref{remindex42}, 
we are left with $3+9+3+3=18$ unsolved cases, corresponding 
to cyclic quotients $\Lb/\Lb'$ of orders $5,6,7,8$, that we list 
below: 

\smallskip{\small 

\noi $d=5$: $m_1=5,6,7$. 

\noi $d=6$: $m_3=0$, $m_1=5$ ; $m_3=1$, $m_1=4,5,6$ ; %
$m_3=2$, $m_1=3$, $4$, $5$, $6$, $7$. 

\noi $d=7$: $(m_1,m_2,m_3)=(4,3,2)$, $(5,2,2)$, $(4,2,3)$. 

\noi $d=8$: $(m_1,m_2,m_3,m_4)=(3,4,2,0)$, $(3,3,2,1)$, $(3,2,2,2)$.} 

\smallskip\noi 
To deal with these eighteen remaining cases would make this paper 
unreasonably long. We consequently end here general proofs, 
though some more cases could have been solved along the line 
of Remark~\ref{remindex42}. 

\medskip 

\subsection{Well-rounded, 9-dimensional lattices} 
\label{subsecdim9c} 

In this subsection we consider well-rounded lattices, 
with $[\Lb:\Lb']=5$ or~$7$. 

\begin{prop} \label{inex5-7} 
Let $\Lb$ be a well-rounded lattice of dimension~$9$ and maximal index 
$5$ or $7$. Then $Q_b(\Lb)$ is strictly smaller than $\frac 94$. 
\end{prop}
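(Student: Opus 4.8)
The plan is to dispose of the few cyclic cases of index $5$ and~$7$ left open at the end of Subsection~\ref{subsecdim9b}. Since $\imath(\Lb)$ is the prime $5$ or~$7$, the quotient $\Lb/\Lb'$ is cyclic of that order for the frame realizing the maximal index. Proposition~\ref{propindex5-8} already handles every class with $s>9$, and Watson's condition together with Proposition~\ref{propWatsoncond} handles the pattern $(m_1,m_2)=(8,1)$, so it suffices to treat the six patterns $(m_1,m_2)\in\{(5,4),(6,3),(7,2)\}$ for $d=5$ and $(m_1,m_2,m_3)\in\{(4,3,2),(5,2,2),(4,2,3)\}$ for $d=7$. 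Because $\Lb$ is well-rounded I would rescale so that $N(e_i)=1$ for all~$i$; then $M(\Lb)\det(\Lb)=\mu_1\cdots\mu_n=1$, and it is enough in each case to produce a basis of $\Lb$ with one vector of norm $<\tfrac{9}{4}$ and the other $n-1$ of norm~$1$.

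Write $e=\tfrac1d\sum_i\,i\sum_{k\in S_i}e_k$ with all coefficients positive (negating some $e_k$), put $A=\sum_i i\,m_i$ and $B=\sum_i i^2m_i$, and note that $A>2d$ in every one of the six cases. First I would bound $N(e)$ by the crude bound of Proposition~\ref{propcrudebound}, which, since all $N(e_i)=1$, reads $N(e)\le\rho:=\tfrac{A^2+B}{2d^2}$. Next, Watson's identity (Proposition~\ref{propWatsonindentity}) applied to~$e$ becomes, again using $N(e_i)=1$,
$$\sum_i\,i\sum_{k\in S_i}N(e-e_k)=A+(A-2d)\,N(e)\,;$$
the left-hand side being a sum of positive terms of total weight~$A$, some vector $e-e_{k_0}$ satisfies $N(e-e_{k_0})\le 1+\tfrac{(A-2d)N(e)}{A}\le 1+\tfrac{(A-2d)\rho}{A}$.

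To turn $e-e_{k_0}$ into a basis vector I would choose, using $m_1\ge4$, an index $j\in S_1$ with $j\ne k_0$: then the coefficient of $e_j$ in $d(e-e_{k_0})$ is~$1$, so replacing $e_j$ by $e-e_{k_0}$ in $(e_1,\dots,e_n)$ gives a basis of $\Lb'+\Z(e-e_{k_0})=\Lb$ (the index is the prime~$d$ and $e-e_{k_0}\equiv e$ generates $\Lb/\Lb'$). Hence $Q_b(\Lb)\le N(e-e_{k_0})\le 1+\tfrac{(A-2d)\rho}{A}$, and substituting the data $(A,B)=(13,21),(12,18),(11,15)$ for $d=5$ and $(16,34),(15,31),(17,39)$ for $d=7$ yields six numbers whose largest is $\tfrac{122}{65}$ (attained by the pattern $(5,4)$), all comfortably below $2<\tfrac{9}{4}$.

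There is no deep obstacle here, since the resulting bound falls well short of~$\tfrac{9}{4}$; the main effort is the bookkeeping, i.e., making sure that Proposition~\ref{propindex5-8}, Watson's condition and the classifications of \cite{M1} and~\cite{K-M-S} really do leave precisely these six patterns. The one place where well-roundedness is indispensable is the final step: for a general lattice the same construction would only give $Q_b(\Lb)\le N(e-e_{k_0})/N(e_j)$ with $j\in S_1$, and $N(e_j)$ might be well below~$1$ (the large-index phenomenon); well-roundedness forces $N(e_j)=1$, collapsing the ratio to $N(e-e_{k_0})$.
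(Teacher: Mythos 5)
Your proof is correct and follows essentially the same route as the paper's: Watson's identity combined with the crude bound of Proposition~\ref{propcrudebound} (both simplified by well-roundedness to $N(e_i)=1$), then swapping a coefficient-one vector $e_j$, $j\in S_1$, for $e-e_{k_0}$ to get a basis, arriving at the same worst bound $\frac{122}{65}<\frac94$. The only difference is organizational: you substitute the six surviving patterns one by one (leaning on the bookkeeping of Subsection~\ref{subsecdim9b}), whereas the paper treats $d=5$ uniformly by noting the bound is decreasing in $m_1$ (after normalizing $m_1\ge m_2$) and leaves $d=7$ as an identical exercise.
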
 

\begin{proof} 
We shall write down a detailed proof for $d=5$, 
and leave to the reader the case of index~$7$, 
for which it suffices to mimic the previous case. 
The method consists in applying Watson's identity 
and using the crude estimate~\ref{propcrudebound} 
to bound $N(e)$. 

In all cases the ordering of the vectors $e_i$ does not matter, 
since they all have the same norm, that we fix equal to~$1$. 
We order them as we did above for index~$9$, 

\medskip 

Thus let $[\Lb:\Lb']=5$, write as usual $n=m_1+m_2$, 
and assume that $m_1\ge m_2$. 
Fix a subscript $i$ in $\{1,\dots,n\}$, then a subscript $j\ne i$ 
in $S_1$. The vectors $e_k,k\ne j$ and $e-e_i$ then constitute 
a basis for $\Lb$, so that we have $Q_b(\Lb)\le N(e-e_i)$. 

Watson's identity, which reads 

\smallskip\ctl{$\sum_{k\in S_1}N(e_k)+2\sum_{k\in S_2}N(e_k)= 
(m_1+2m_2)+(m_1+2m_2-10)N(e)$\,,} 

\smallskip\noi 
shows that there exists $i$ such that 

\smallskip\ctl{$N(e-e_i)\le 1+\frac{m_1+2m_2-10}{m_1+2m_2}$\,N(e)\,,} 

\smallskip\noi 
whereas Proposition~\ref{propcrudebound} gives the bound 

\smallskip\ctl{$N(e)\le\frac{2n(n+1)-m_1(4n+3-m_1)/2}{25} $\,.} 
 
\smallskip\noi 
For given $n$, both $N(e)$ and its coefficient are decreasing 
functions of $m_1$, so that $N(e-e_i)$ is bounded from above 
by its value at $\lf\frac{n+1}2\rf$. 

For $n=9$ we obtain the bound 
$Q_b(\Lb)\le 1+\frac 3{13}\frac{95}{25}=\frac{122}{65}<\frac 94$. 

\smallskip 

The same argument applies to dimension~$7$, and the large 
deno\-minator $(49$ instead of $25$) in $N(e)$ 
yields in all cases an upper bound far below~$\frac 94$. 
\end{proof} 

Most of the proofs we gave all along this paper for dimensions 
$7,8,9$ could have been made simpler if we had restricted ourselves 
to well-rounded lattices, and we can even very often easily check that 
the bounds we obtained are not optimal every time we had to take 
into account the place of $e_n$ with respect to the subsets $S+i$ 
relative to various cyclic components. This supports the following 
conjecture: 

\begin{conj} \label{conjwellrounded} 
In all dimensions, the maxima of $Q_b$ and $Q_g$ are attained 
on well-rounded lattices 
\end{conj}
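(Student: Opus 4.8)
The plan is to prove Conjecture~\ref{conjwellrounded} — that in every dimension the suprema of $Q_b$ and $Q_g$ are attained on well-rounded lattices — by a deformation argument modelled on the proof of Minkowski's theorem recalled in Section~\ref{secintro}. Fix $n$ and consider the function $\Lb\mapsto Q_b(\Lb)$ on the (compact, after normalizing $\det\Lb=1$) space of similarity classes of $n$-dimensional lattices. The first step is to show this function is upper semicontinuous and that, away from well-rounded lattices, any lattice admits a one-parameter family of deformations along which $Q_b$ does not decrease. Concretely, if $\Lb$ is not well-rounded, then among the vectors $e_1,\dots,e_n$ realizing a frame of successive minima there is at least one, say $e_n$, with $N(e_n)>\min\Lb$; one shrinks the lattice in the direction transverse to the span of the shorter minima so that $\det\Lb$ stays fixed while $N(e_n)$ increases relative to the others. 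The point is that $M(\Lb)$ — the Minkowski-type quantity, governed by the genuinely short independent vectors — changes by a controlled amount while $H_b(\Lb)$, which is pinned to a product involving $N(e_n)$, grows at least proportionally, so $Q_b=H_b/M$ is non-decreasing along the deformation. Iterating and passing to a limit of the flow, every local maximum is forced onto the well-rounded locus.

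The key steps, in order, would be: (i) set up the parametrization of lattices by Gram matrices with $\det=1$ and verify that $H_b$, $M$, hence $Q_b$ and $Q_g$, are well-defined, scale-invariant, and upper semicontinuous; (ii) show that on the well-rounded locus $Q_b$ is already bounded (indeed $M(\Lb)=\g(\Lb)^n\le\g_n^n$ there, by Theorem~\ref{thHermMin}, and $H_b$ is controlled by Theorem~\ref{thHermMin} as well), so a genuine supremum exists; (iii) the core lemma: at any non-well-rounded $\Lb$ construct the deformation described above and prove the monotonicity of $Q_b$ — this requires tracking how a frame of successive minima and a basis achieving $H_b$ evolve, using that the configuration of \emph{strictly} minimal vectors is locally constant under small deformations; (iv) run the gradient-type flow and take limits, using compactness to land on a well-rounded lattice with at least the same value of $Q_b$; (v) repeat the entire argument verbatim for $Q_g$, where the only change is that $H_g$ is defined via generating sets rather than bases, and the same deformation keeps the relevant short vectors short.

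The main obstacle is step (iii): the invariant $H_b$ is defined as a \emph{minimum over bases}, and as one deforms $\Lb$ the basis that achieves $H_b(\Lb)$ may jump discontinuously, so one cannot simply differentiate. One must argue more carefully — for instance, fix the \emph{combinatorial type} of an optimal basis on a small neighbourhood (there are finitely many candidate bases of bounded norm, by a packing bound, so $H_b$ is locally the minimum of finitely many analytic functions), and check the monotonicity inequality for each candidate. A subtler point is that $H_b$ can fail to be attained on a basis of small norm if the index $[\Lb:\Lb']$ is large, and controlling which $\Z/d\Z$-codes survive the deformation is exactly the kind of case analysis the body of this paper carries out for $n\le 9$; making it uniform in $n$ is what makes the conjecture hard. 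A cleaner but heavier alternative would be to prove the stronger claim flagged in the paper — that the exact bound on each admissible code is attained only on well-rounded lattices — and deduce the conjecture from the (hypothetical) complete classification of $\Z/d\Z$-codes in all dimensions; I would pursue the deformation route first precisely because it sidesteps that classification.
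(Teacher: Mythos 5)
You should note first that this statement is Conjecture~\ref{conjwellrounded}: the paper offers no proof of it. The author states it precisely because an \emph{a priori} proof ``would considerably simplify our proofs'' (Section~\ref{secintro}) and only records in Section~\ref{secdim9} the empirical evidence that the code-by-code bounds obtained for $n\le 9$ fail to be sharp exactly when they are not forced onto well-rounded lattices. So there is no argument of the paper to compare yours with; the only question is whether your sketch constitutes a proof. It does not, and the gap is the one you yourself flag as step (iii).

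Concretely, the deformation trick recalled in the introduction works for $M$ because one fixes the sublattice spanned by the minimal vectors and contracts the transverse directions: $\min\Lb$ is unchanged, $\det\Lb$ strictly decreases, so $M=\min^n/\det$ (at a well-rounded limit) strictly increases, and only one quantity moves. For $Q_b=H_b/M$ both numerator and denominator move, and your central claim --- that $H_b$ ``grows at least proportionally'' while $M$ ``changes by a controlled amount'' --- is asserted, not proved. Increasing $N(e_n)$ relative to the other minima also increases the product $N(e_1)\cdots N(e_n)$ in the denominator $M(\Lb)$, the basis realizing $H_b$ can jump to one that avoids the stretched direction altogether, and the $\Z/d\Z$-code attached to $(\Lb,\Lb')$ can change along the flow, so nothing in the sketch excludes $Q_b$ strictly decreasing under your deformation. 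Moreover the assertion that the configuration of strictly minimal vectors is locally constant does not by itself control a \emph{frame of successive minima} (the higher minima, which is what $M$ and the codes depend on, can be realized by different vectors after an arbitrarily small perturbation), and upper semicontinuity of $H_b$ would itself need an argument, since the optimal basis is a minimum over an infinite set of bases and only a packing bound reduces it locally to finitely many candidates --- a reduction you mention but do not carry out. In short, steps (i), (ii), (iv) are routine or plausible, but without a proof of the monotonicity lemma the conjecture remains exactly as open as the paper leaves it; your alternative route (sharp bounds per code attained only on well-rounded lattices) is likewise the strengthening the author says he cannot prove.
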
 

\subsection{Beyond dimension 9} 
\label{subseccompldim9} 

We just want to consider $2$-elementary quotients $\Lb/\Lb'$ 
up to dimension~$12$. The exact bound for $Q_b$ on $2$-elementary 
quotients has been shown to be equal to 
$\frac{6\cdot 7}{16}=\frac{21}8=2.625$ in dimension~$10$. 
This is a simple matter of classifying binary codes of weight~$w>4$. 
This classification is easily extended in dimensions $11$ and~$12$. 
It turns out that the highest values for $Q_b(\Lb)$ on the set 
of lattices with $2$-elementary quotients $\Lb/\Lb'$ 
are obtained by lifting unique even codes $C_{11}$ and $C_{12}$ 
of weight~$6$. We first define $C_{12}$ by the generator matrix 

\ctl{ 
$G_{12}=\left(\stx 
1&1&1&1&1&1&0&0&0&0&0&0\\ 
0&0&0&1&1&1&1&1&1&0&0&0\\ 
1&1&0&1&0&0&1&0&0&1&1&0\\ 
1&0&0&1&1&0&0&1&0&1&0&1\\ 
\estx\right)\,,$} 

\smallskip\noi 
then $C_{11}$ by the generator matrix $G_{11}$ obtained from $G_{12}$ 
by deleting the last column and the last row. 
The weight distribution of $C_{11}$ is $6^6\cdot 8$ 
and that of $C_{12}$ is $6^{12}\cdot 8^3$, 
which gives $Q_b$ the lower bounds $\frac{27}8=3.375$ 
and $\frac{81}{16}=5.0625$, respectively, 
reasonably close to van der Waerden's bounds 
($4.768...$ and $5.960...$, respectively). 

\smallskip\noi 
{\em I conjecture that $2$-elementary quotients 
still produce the largest possible values for $\Q_b$ 
in dimensions $10$, $11$ and $12$.}

\vfill\eject\hbox{} 


\end{document}